\definecolor{myred}{rgb}{205, 2, 27}
\tikzstyle{vertex}=[circle,fill=black!25,minimum size=14pt,inner sep=0pt]
\tikzstyle{selected vertex} = [vertex, fill=red!24]
\tikzstyle{edge} = [draw,thick,-]
\tikzstyle{weight} = [font=\small]
\tikzstyle{selected edge} = [draw,line width=5pt,-,red!50]
\tikzstyle{ignored edge} = [draw,line width=5pt,-,black!20]
\newtheorem{theorem}{\bfseries Theorem}[section]
\newtheorem{proposition}[theorem]{\bfseries Proposition}
\newtheorem{lemma}[theorem]{\bfseries Lemma}
\newtheorem{claim}[theorem]{\bfseries Claim}
\theoremstyle{definition}  
\newtheorem{remark}[theorem]{\bfseries Remark}
\newtheorem{definition}[theorem]{\bfseries Definition}
\newtheorem{example}[theorem]{\bfseries Example}
\newenvironment{manualtheorem}[1]{%
	\manualtheoreminner
}{\endmanualtheoreminner}
\title{\textbf{On Distinguishing Graphs and Cost Number using Automorphism Representations} \\ \vspace{10mm} \large{Alexa Gopaulsingh$^*$,  Zalán Molnár,   Amitayu Banerjee }}
\def\>{\ensuremath\rangle}
\def\<{\ensuremath\langle}
\subjclass[2020]{05C15, 05C25.}
\keywords{distinguishing number, determining number, cost number}
\begin{document}
	
	\maketitle

	\vspace{-1.cm}
	\begin{center}
		\noindent \textbf{Abstract}
	\end{center}
	{\small  A \textit{distinguishing coloring} of a graph is a vertex coloring  such that only the identity automorphism of the graph preserves the coloring. A \textit{2-distinguishable graph} is a graph which can be distinguished using 2 colors. The \textit{cost} $\rho(G)$ of a 2-distinguishable graph $G$, is the smallest size of a color set of a distinguishing coloring of $G$. The \textit{determining number} of a graph $G$, $Det(G)$,  is the minimum number of nodes, which if fixed by a coloring, would ensure that the coloring distinguishes the entire graph. 
		
		Boutin (J. Combin. Math. Combin. Comput. 85: 161-171, 2013) posed an open problem which asks if $\rho(G)$ and $Det(G)$ can be arbitrarily far apart. It is trivial that it cannot be so for the case $Det(G) = 1$ but the answer was unknown for $Det(G) \geq 2$. We solve this problem  for the case $Det(G) = 2$.  We show that for the case $Det(G) = 2$, that not only is the cost  bounded but in fact it takes small values with $\rho(G) = 2, \ 3$ or $4$. In order to establish this, the concept of the \textit{automorphism representation} of a graph is developed. Graphs having equivalent automorphism representations  implies that they have the same distinguishing number (note that just having  isomorphic automorphism groups is not enough for this to hold). This prompts a factoring of graphs by which two graphs are \textit{distinguishably equivalent} iff they have equivalent automorphism representations. }
	
	\let\thefootnote\relax\footnotetext{\textit{Keywords: distinguishing number, determining number, cost number, graph automorphisms}\\ \phantom{aa} {2020} Mathematics Subject Classification 05C15 05C25\\ \phantom{aa} *Corresponding author}
	\section{Introduction}

	The notion of distinguishing colorings was introduced in 1977 by  L\'{a}szl\'{o} Babai under the name of asymmetric colorings, see \cite{Babai1977}. This is a coloring of the nodes of a graph, not necessarily proper, such that only the identity automorphism preserves the coloring. Following this, 
	in  1996, Albertson and Collins introduced in \cite{AC1996},  the \textbf{distinguishing number} of a graph $G$, $D(G)$, as the least integer $d$ such that $G$ admits a vertex coloring which is a distinguishing coloring of $G$ with $d$ colors. A graph that can be distinguished by a coloring using $d$ colors  is said to be {\textbf{$d$-distinguishable}. There are several papers finding the distinguishing numbers of various classes of graphs, see Albertson and Boutin in \cite{R1}, Imrich and Klavzar in \cite{IK2006}, Tymoczko in \cite{R3} and  Kalinowski and Pilśniak in \cite{D2a}.
    

		\indent The anecdotal setup introducing distinguishing colorings is one of seeing the nodes of a graph as identical looking keys placed in a graph structure. Then we can ask, what is the minimum number of colors needed to be able to tell which key is which? For example, if the keys are connected in a line by a string, we need two colors, say red and blue, to differentiate the keys on the different ends. In \cite{Bou2008}, Boutin noticed that when 2-distinguishable graphs are considered, we can \textit{really} use one color and consider the uncolored class the remaining color. In our chain of keys example, suppose that we have 10 keys or nodes. It is enough to only use red and color 3 keys red and now each key will have a unique place in this structure. The natural question, then became what is the \textit{minimum} number of nodes that need to be painted in order to distinguish the graph, see \cite{Bou2021}? This type of question was originally posed by Wilfried Imrich, see \cite{Bou2013a}. Here, the definition of the \textbf{cost number of a 2-distinguishable graph}, $\rho(G)$, was given as the minimum number of nodes  that needs to be colored to distinguish the graph, see Definition \ref{D5}.  For the 10-node path in the given example, coloring  3 nodes red is wasting paint and it is enough to color 1 end node red to distinguish it for any $P_n$, where $n \geq 2$. 
        Interestingly, the difference between the number of nodes of the graph and  cost can be vast.
		Boutin in \cite{Bou2008} gave the example that the hypercube $Q_{16}$ has $2^{16}$ vertices and $16! \times 2^{16}$ automorphisms, yet one only needs to color just $7$ nodes red to distinguish it. 
		For $n \geq 5$, Boutin \cite{Bou2008,Bou2021b} showed that for hypercubes $Q_n$, $\lceil log_{2}n\rceil+1\leq\rho(Q_{n}) \leq 2\lceil log_{2}n\rceil-1$.
		
		 Recent work shows that there are several classes of graphs that are {2-distinguishable}. These classes
		include hypercubes $Q_{n}$, where $n \geq 4$, see \cite{BC2004}; Cartesian powers $G^{n}$ of a
		connected graph $G \neq K_{2}, K_{3}$, where $n\geq 2$, see \cite{IK2006}; Kneser graphs $K_{n,k}$, where $n \geq 6$ and $k \geq 2$, see \cite{AB2007};
		and 3-connected planar graphs (excluding seven small graphs), see \cite{FNT2008}. In fact, it is conjectured that almost all finite graphs are 2-distinguishable, see \cite{D2a}.
		
		A significant concept used for finding the distinguishing and  cost numbers of a graph, is the \textbf{determining number} of a graph $G$, denoted by $Det(G)$ (see Definition \ref{D5}). This is the minimum size of a set of nodes $S$,  which is such that if $\varphi$ is a non-trivial automorphism of $G$, then $\varphi$ moves at least one node of $S$.  This concept was also introduced in the literature as the \textbf{fixing number} of a set, see \cite{Fixing1}, \cite{Fixing2}. Alikhani and Soltani in \cite{AS2021} studied some properties of the cost numbers of graphs using their determining numbers.
        
        
We know that for a fixed distinguishing number, we can find graphs with arbitrarly large determining number. Now, for graphs with $Det(G) = 1$, that is, only one node needs to be fixed to distinguish $G$, coloring that node red and everything else blue would be enough to distinguish this node and so its cost would also be 1. However, it was not clear what the cost could be if $Det(G) \geq 2$, prompting Boutin to ask in \cite{Bou2013}, the  question:\\
		\vspace{-1mm}
		
        \textbf{Open question}: Find graphs for which $\rho(G)$ is arbitrarily larger than $Det(G)$.\\ 
		\vspace{-1mm}
		
We will show in this paper that if $Det(G) = 2$, then this is not possible (see Theorem \ref{main_thm}). Indeed, not only is the cost bounded but in this case it takes small values and here $\rho(G) = 2, 3$ or 4, no matter how large $G$ is. Note however, that the increase in difficulty of the solution from $Det(G) =1$ to $Det(G)= 2$ is drastic as will be witnessed by the complexity of the proof required to establish the result.  It is currently unknown what the situation is for $Det(G) \geq 3$, even whether this condition  bounds the cost in some cases and in others not. Moreover, if the bounds exist, then what are they? In Section 4, we use a class examples to obtain lower bounds on what the bounds of some further cases would be, if for these cases they do indeed exist.
		
		A concept developed, in order to show the main result is that of automorphism representations of a graph. This is essentially the automorphism group of a graph but represented as a permutation of its nodes (modulo different possible labelings of the graph). Note that,   two  graphs having isomorphic automorphism groups is not enough to conclude that they have the same distinguishing number.   Similar observations were made by  Tymoczko in \cite{R3}.   However, we find that graphs having  equivalent automorphism group representations \textit{is} enough to conclude that they have the same distinguishing number. Here,  under the appropriate labeling correspondence, the very same colorings which can be used to distinguish one graph can be used to distinguish the other.  This leads to a factoring of  graphs under the equivalence relation of two graphs being equal if and only if they have equivalent automorphism representations. These graphs can be considered to be \textbf{distinguishably equivalent} and to give a $k$-distinguishing coloring of any graph is to always implicitly distinguish all of the graphs in its equivalence class. 
	
    \section{Automorphism Representations}
  \noindent  In this section, we build the notion of  two graphs being distinguishably equivalent, which we will then use to solve the main result.
		\begin{definition}
			Let $G = (V(G), E(G))$ be a graph. A \textbf{labeling }of $G$ is an injective map, $l:V(G)\rightarrow S$, naming the nodes of $G$. 
			For each automorphism $\alpha\in Aut(G)$  expressed as a product of disjoint cycles, say $(v_{11} v_{12}...v_{1k_{1}})...(v_{m1} v_{m2}...v_{mk_{n}})$,  let $l(\alpha)$ denote $$(l(v_{11}) l(v_{12})...l(v_{1k_{1}}))...(l(v_{m1}) l(v_{m2})...l(v_{mk_{n}})).$$ 
			We define $Aut(G, V(G)_{l}):=\{l(\alpha):\alpha\in Aut(G)\}$ as \textbf{the automorphism representation of $G$ under the labeling $l$}.
		\end{definition}

		\begin{definition}

        We define a relation on the class of graphs and labeled representations: $G_1 \equiv G_2$ and  $ Aut(G_1, V(G_1)_{l_i}) \equiv Aut(G_2, V(G_2)_{l_j})$ iff  there exist $l_1: V(G_1) \rightarrow S$ and $l_2: V(G_2) \rightarrow S$ such that $ Aut(G_1, V(G_1)_{l_1}) = Aut(G_2, V(G_2)_{l_2})$. As these are equivalence relations, we call such $G_1$ and  $G_2$,  \textbf{distinguishably equivalent graphs}  and the equivalence class of $Aut(G, V(G)_{l_i})$  is called the \textbf{automorphism representation} of $G$ and is denoted by $Aut(G, V(G))$.
		\end{definition}

		\begin{example} 
        \label{E1} Consider the  graphs $G_1$ and $G_2$ below. These are distinguishably equivalent graphs as witnessed by the following labelings and the automorphism representations corresponding to these:\\
		\end{example}

        \vspace{-3mm}
		
		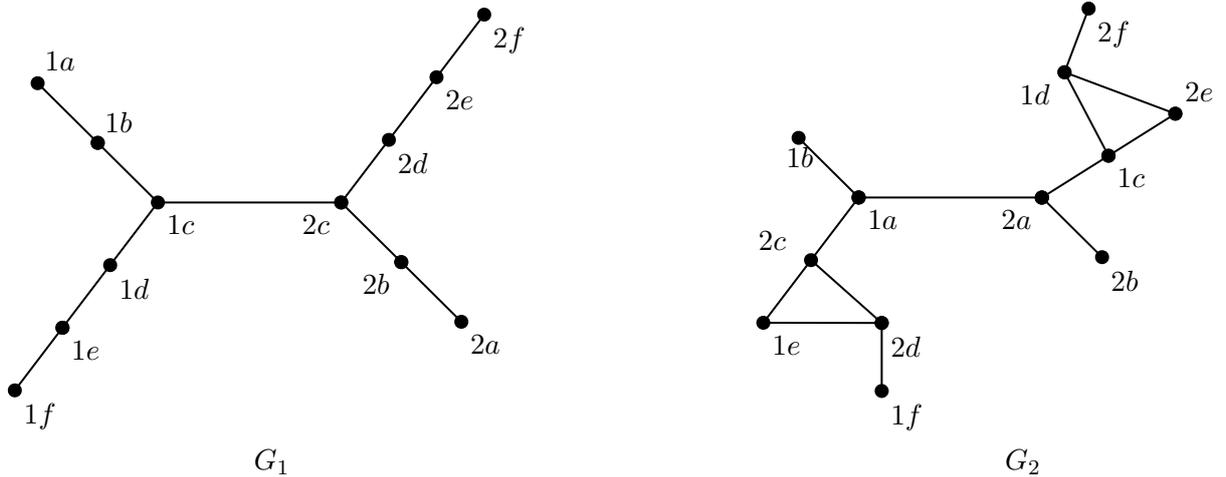
\begin{figure}[H]
		    \centering
		   	\vspace{-7mm}
\begin{center}
	\tikzset{every picture/.style={line width=0.75pt}} 
	
	\begin{tikzpicture}[x=0.75pt,y=0.75pt,yscale=-1.5,xscale=1.5]
		
		\draw    (169,118) -- (230,118) ;
		\draw [shift={(230,118)}, rotate = 0] [color={rgb, 255:red, 0; green, 0; blue, 0 }  ][fill={rgb, 255:red, 0; green, 0; blue, 0 }  ][line width=0.75]      (0, 0) circle [x radius= 1.34, y radius= 1.34]   ;
		\draw [shift={(169,118)}, rotate = 0] [color={rgb, 255:red, 0; green, 0; blue, 0 }  ][fill={rgb, 255:red, 0; green, 0; blue, 0 }  ][line width=0.75]      (0, 0) circle [x radius= 1.34, y radius= 1.34]   ;
		\draw    (153.13,139) -- (169,118) ;
		\draw [shift={(169,118)}, rotate = 307.07] [color={rgb, 255:red, 0; green, 0; blue, 0 }  ][fill={rgb, 255:red, 0; green, 0; blue, 0 }  ][line width=0.75]      (0, 0) circle [x radius= 1.34, y radius= 1.34]   ;
		\draw [shift={(153.13,139)}, rotate = 307.07] [color={rgb, 255:red, 0; green, 0; blue, 0 }  ][fill={rgb, 255:red, 0; green, 0; blue, 0 }  ][line width=0.75]      (0, 0) circle [x radius= 1.34, y radius= 1.34]   ;
		\draw    (137.27,160) -- (153.13,139) ;
		\draw [shift={(153.13,139)}, rotate = 307.07] [color={rgb, 255:red, 0; green, 0; blue, 0 }  ][fill={rgb, 255:red, 0; green, 0; blue, 0 }  ][line width=0.75]      (0, 0) circle [x radius= 2.01, y radius= 2.01]   ;
		\draw [shift={(137.27,160)}, rotate = 307.07] [color={rgb, 255:red, 0; green, 0; blue, 0 }  ][fill={rgb, 255:red, 0; green, 0; blue, 0 }  ][line width=0.75]      (0, 0) circle [x radius= 2.01, y radius= 2.01]   ;
		\draw    (121.4,181) -- (137.27,160) ;
		\draw [shift={(137.27,160)}, rotate = 307.07] [color={rgb, 255:red, 0; green, 0; blue, 0 }  ][fill={rgb, 255:red, 0; green, 0; blue, 0 }  ][line width=0.75]      (0, 0) circle [x radius= 2.01, y radius= 2.01]   ;
		\draw [shift={(121.4,181)}, rotate = 307.07] [color={rgb, 255:red, 0; green, 0; blue, 0 }  ][fill={rgb, 255:red, 0; green, 0; blue, 0 }  ][line width=0.75]      (0, 0) circle [x radius= 2.01, y radius= 2.01]   ;
		\draw    (149,98) -- (169,118) ;
		\draw [shift={(169,118)}, rotate = 45] [color={rgb, 255:red, 0; green, 0; blue, 0 }  ][fill={rgb, 255:red, 0; green, 0; blue, 0 }  ][line width=0.75]      (0, 0) circle [x radius= 2.01, y radius= 2.01]   ;
		\draw [shift={(149,98)}, rotate = 45] [color={rgb, 255:red, 0; green, 0; blue, 0 }  ][fill={rgb, 255:red, 0; green, 0; blue, 0 }  ][line width=0.75]      (0, 0) circle [x radius= 2.01, y radius= 2.01]   ;
		\draw    (129,78) -- (149,98) ;
		\draw [shift={(149,98)}, rotate = 45] [color={rgb, 255:red, 0; green, 0; blue, 0 }  ][fill={rgb, 255:red, 0; green, 0; blue, 0 }  ][line width=0.75]      (0, 0) circle [x radius= 2.01, y radius= 2.01]   ;
		\draw [shift={(129,78)}, rotate = 45] [color={rgb, 255:red, 0; green, 0; blue, 0 }  ][fill={rgb, 255:red, 0; green, 0; blue, 0 }  ][line width=0.75]      (0, 0) circle [x radius= 2.01, y radius= 2.01]   ;
		\draw    (250,138) -- (270,158) ;
		\draw [shift={(270,158)}, rotate = 45] [color={rgb, 255:red, 0; green, 0; blue, 0 }  ][fill={rgb, 255:red, 0; green, 0; blue, 0 }  ][line width=0.75]      (0, 0) circle [x radius= 2.01, y radius= 2.01]   ;
		\draw [shift={(250,138)}, rotate = 45] [color={rgb, 255:red, 0; green, 0; blue, 0 }  ][fill={rgb, 255:red, 0; green, 0; blue, 0 }  ][line width=0.75]      (0, 0) circle [x radius= 2.01, y radius= 2.01]   ;
		\draw    (230,118) -- (250,138) ;
		\draw [shift={(250,138)}, rotate = 45] [color={rgb, 255:red, 0; green, 0; blue, 0 }  ][fill={rgb, 255:red, 0; green, 0; blue, 0 }  ][line width=0.75]      (0, 0) circle [x radius= 2.01, y radius= 2.01]   ;
		\draw [shift={(230,118)}, rotate = 45] [color={rgb, 255:red, 0; green, 0; blue, 0 }  ][fill={rgb, 255:red, 0; green, 0; blue, 0 }  ][line width=0.75]      (0, 0) circle [x radius= 2.01, y radius= 2.01]   ;
		\draw    (261.73,76) -- (277.6,55) ;
		\draw [shift={(277.6,55)}, rotate = 307.07] [color={rgb, 255:red, 0; green, 0; blue, 0 }  ][fill={rgb, 255:red, 0; green, 0; blue, 0 }  ][line width=0.75]      (0, 0) circle [x radius= 2.01, y radius= 2.01]   ;
		\draw [shift={(261.73,76)}, rotate = 307.07] [color={rgb, 255:red, 0; green, 0; blue, 0 }  ][fill={rgb, 255:red, 0; green, 0; blue, 0 }  ][line width=0.75]      (0, 0) circle [x radius= 2.01, y radius= 2.01]   ;
		\draw    (245.87,97) -- (261.73,76) ;
		\draw [shift={(261.73,76)}, rotate = 307.07] [color={rgb, 255:red, 0; green, 0; blue, 0 }  ][fill={rgb, 255:red, 0; green, 0; blue, 0 }  ][line width=0.75]      (0, 0) circle [x radius= 1.34, y radius= 1.34]   ;
		\draw [shift={(245.87,97)}, rotate = 307.07] [color={rgb, 255:red, 0; green, 0; blue, 0 }  ][fill={rgb, 255:red, 0; green, 0; blue, 0 }  ][line width=0.75]      (0, 0) circle [x radius= 1.34, y radius= 1.34]   ;
		\draw    (230,118) -- (245.87,97) ;
		\draw [shift={(245.87,97)}, rotate = 307.07] [color={rgb, 255:red, 0; green, 0; blue, 0 }  ][fill={rgb, 255:red, 0; green, 0; blue, 0 }  ][line width=0.75]      (0, 0) circle [x radius= 2.01, y radius= 2.01]   ;
		\draw [shift={(230,118)}, rotate = 307.07] [color={rgb, 255:red, 0; green, 0; blue, 0 }  ][fill={rgb, 255:red, 0; green, 0; blue, 0 }  ][line width=0.75]      (0, 0) circle [x radius= 2.01, y radius= 2.01]   ;

		\draw (130.43,66.54) node [anchor=north west][inner sep=0.75pt]  [font=\small]  {$1a$};
		\draw (150.43,86.54) node [anchor=north west][inner sep=0.75pt]  [font=\small]  {$1b$};
		\draw (171,121.4) node [anchor=north west][inner sep=0.75pt]  [font=\small]  {$1c$};
		\draw (155.13,142.4) node [anchor=north west][inner sep=0.75pt]  [font=\small]  {$1d$};
		\draw (139.27,163.4) node [anchor=north west][inner sep=0.75pt]  [font=\small]  {$1e$};
		\draw (123.4,184.4) node [anchor=north west][inner sep=0.75pt]  [font=\small]  {$1f$};
		
		\draw (200, 200) node [anchor=north west][inner sep=0.75pt]  [font=\small]  {$G_1$};
		
		\draw (450, 200) node [anchor=north west][inner sep=0.75pt]  [font=\small]  {$G_2$};

		\draw (236.1,140.9) node [anchor=north west][inner sep=0.75pt]  [font=\small]  {$2b$};
		\draw (272,161.4) node [anchor=north west][inner sep=0.75pt]  [font=\small]  {$2a$};
		\draw (216.1,121.4) node [anchor=north west][inner sep=0.75pt]  [font=\small]  {$2c$};
		\draw (247.87,100.4) node [anchor=north west][inner sep=0.75pt]  [font=\small]  {$2d$};
		\draw (263.73,79.4) node [anchor=north west][inner sep=0.75pt]  [font=\small]  {$2e$};
		\draw (279.6,58.4) node [anchor=north west][inner sep=0.75pt]  [font=\small]  {$2f$};
		\draw (377.1,98.4) node [anchor=north west][inner sep=0.75pt]  [font=\small]  {$1b$};
		
			\hspace{-.4cm}
		
		\draw    (412.33,116.33) -- (473.33,116.33) ;
		\draw [shift={(473.33,116.33)}, rotate = 0] [color={rgb, 255:red, 0; green, 0; blue, 0 }  ][fill={rgb, 255:red, 0; green, 0; blue, 0 }  ][line width=0.75]      (0, 0) circle [x radius= 2.01, y radius= 2.01]   ;
		\draw [shift={(412.33,116.33)}, rotate = 0] [color={rgb, 255:red, 0; green, 0; blue, 0 }  ][fill={rgb, 255:red, 0; green, 0; blue, 0 }  ][line width=0.75]      (0, 0) circle [x radius= 2.01, y radius= 2.01]   ;
		\draw    (396.47,137.33) -- (412.33,116.33) ;
		\draw [shift={(412.33,116.33)}, rotate = 307.07] [color={rgb, 255:red, 0; green, 0; blue, 0 }  ][fill={rgb, 255:red, 0; green, 0; blue, 0 }  ][line width=0.75]      (0, 0) circle [x radius= 2.01, y radius= 2.01]   ;
		\draw [shift={(396.47,137.33)}, rotate = 307.07] [color={rgb, 255:red, 0; green, 0; blue, 0 }  ][fill={rgb, 255:red, 0; green, 0; blue, 0 }  ][line width=0.75]      (0, 0) circle [x radius= 2.01, y radius= 2.01]   ;
		\draw    (380.6,158.33) -- (396.47,137.33) ;
		\draw [shift={(396.47,137.33)}, rotate = 307.07] [color={rgb, 255:red, 0; green, 0; blue, 0 }  ][fill={rgb, 255:red, 0; green, 0; blue, 0 }  ][line width=0.75]      (0, 0) circle [x radius= 1.34, y radius= 1.34]   ;
		\draw [shift={(380.6,158.33)}, rotate = 307.07] [color={rgb, 255:red, 0; green, 0; blue, 0 }  ][fill={rgb, 255:red, 0; green, 0; blue, 0 }  ][line width=0.75]      (0, 0) circle [x radius= 1.34, y radius= 1.34]   ;
		\draw    (392.33,96.33) -- (412.33,116.33) ;
		\draw [shift={(412.33,116.33)}, rotate = 45] [color={rgb, 255:red, 0; green, 0; blue, 0 }  ][fill={rgb, 255:red, 0; green, 0; blue, 0 }  ][line width=0.75]      (0, 0) circle [x radius= 2.01, y radius= 2.01]   ;
		\draw [shift={(392.33,96.33)}, rotate = 45] [color={rgb, 255:red, 0; green, 0; blue, 0 }  ][fill={rgb, 255:red, 0; green, 0; blue, 0 }  ][line width=0.75]      (0, 0) circle [x radius= 2.01, y radius= 2.01]   ;
		\draw    (473.33,116.33) -- (493.33,136.33) ;
		\draw [shift={(493.33,136.33)}, rotate = 45] [color={rgb, 255:red, 0; green, 0; blue, 0 }  ][fill={rgb, 255:red, 0; green, 0; blue, 0 }  ][line width=0.75]      (0, 0) circle [x radius= 2.01, y radius= 2.01]   ;
		\draw [shift={(473.33,116.33)}, rotate = 45] [color={rgb, 255:red, 0; green, 0; blue, 0 }  ][fill={rgb, 255:red, 0; green, 0; blue, 0 }  ][line width=0.75]      (0, 0) circle [x radius= 2.01, y radius= 2.01]   ;
		\draw    (396.47,137.33) -- (420,158.4) ;
		\draw [shift={(420,158.4)}, rotate = 41.83] [color={rgb, 255:red, 0; green, 0; blue, 0 }  ][fill={rgb, 255:red, 0; green, 0; blue, 0 }  ][line width=0.75]      (0, 0) circle [x radius= 1.34, y radius= 1.34]   ;
		\draw [shift={(396.47,137.33)}, rotate = 41.83] [color={rgb, 255:red, 0; green, 0; blue, 0 }  ][fill={rgb, 255:red, 0; green, 0; blue, 0 }  ][line width=0.75]      (0, 0) circle [x radius= 1.34, y radius= 1.34]   ;
		\draw    (380.6,158.33) -- (420,158.4) ;
		\draw [shift={(420,158.4)}, rotate = 0.1] [color={rgb, 255:red, 0; green, 0; blue, 0 }  ][fill={rgb, 255:red, 0; green, 0; blue, 0 }  ][line width=0.75]      (0, 0) circle [x radius= 2.01, y radius= 2.01]   ;
		\draw [shift={(380.6,158.33)}, rotate = 0.1] [color={rgb, 255:red, 0; green, 0; blue, 0 }  ][fill={rgb, 255:red, 0; green, 0; blue, 0 }  ][line width=0.75]      (0, 0) circle [x radius= 2.01, y radius= 2.01]   ;
		\draw    (420,158.4) -- (420,181.2) ;
		\draw [shift={(420,181.2)}, rotate = 90] [color={rgb, 255:red, 0; green, 0; blue, 0 }  ][fill={rgb, 255:red, 0; green, 0; blue, 0 }  ][line width=0.75]      (0, 0) circle [x radius= 2.01, y radius= 2.01]   ;
		\draw [shift={(420,158.4)}, rotate = 90] [color={rgb, 255:red, 0; green, 0; blue, 0 }  ][fill={rgb, 255:red, 0; green, 0; blue, 0 }  ][line width=0.75]      (0, 0) circle [x radius= 2.01, y radius= 2.01]   ;
		\draw    (495.46,102.32) -- (473.23,116.4) ;
		\draw [shift={(473.23,116.4)}, rotate = 147.65] [color={rgb, 255:red, 0; green, 0; blue, 0 }  ][fill={rgb, 255:red, 0; green, 0; blue, 0 }  ][line width=0.75]      (0, 0) circle [x radius= 2.01, y radius= 2.01]   ;
		\draw [shift={(495.46,102.32)}, rotate = 147.65] [color={rgb, 255:red, 0; green, 0; blue, 0 }  ][fill={rgb, 255:red, 0; green, 0; blue, 0 }  ][line width=0.75]      (0, 0) circle [x radius= 2.01, y radius= 2.01]   ;
		\draw    (517.7,88.23) -- (495.46,102.32) ;
		\draw [shift={(495.46,102.32)}, rotate = 147.65] [color={rgb, 255:red, 0; green, 0; blue, 0 }  ][fill={rgb, 255:red, 0; green, 0; blue, 0 }  ][line width=0.75]      (0, 0) circle [x radius= 2.01, y radius= 2.01]   ;
		\draw [shift={(517.7,88.23)}, rotate = 147.65] [color={rgb, 255:red, 0; green, 0; blue, 0 }  ][fill={rgb, 255:red, 0; green, 0; blue, 0 }  ][line width=0.75]      (0, 0) circle [x radius= 2.01, y radius= 2.01]   ;
		\draw    (495.46,102.32) -- (480.84,74.32) ;
		\draw [shift={(480.84,74.32)}, rotate = 242.41] [color={rgb, 255:red, 0; green, 0; blue, 0 }  ][fill={rgb, 255:red, 0; green, 0; blue, 0 }  ][line width=0.75]      (0, 0) circle [x radius= 1.34, y radius= 1.34]   ;
		\draw [shift={(495.46,102.32)}, rotate = 242.41] [color={rgb, 255:red, 0; green, 0; blue, 0 }  ][fill={rgb, 255:red, 0; green, 0; blue, 0 }  ][line width=0.75]      (0, 0) circle [x radius= 1.34, y radius= 1.34]   ;
		\draw    (517.7,88.23) -- (480.84,74.32) ;
		\draw [shift={(480.84,74.32)}, rotate = 200.68] [color={rgb, 255:red, 0; green, 0; blue, 0 }  ][fill={rgb, 255:red, 0; green, 0; blue, 0 }  ][line width=0.75]      (0, 0) circle [x radius= 2.01, y radius= 2.01]   ;
		\draw [shift={(517.7,88.23)}, rotate = 200.68] [color={rgb, 255:red, 0; green, 0; blue, 0 }  ][fill={rgb, 255:red, 0; green, 0; blue, 0 }  ][line width=0.75]      (0, 0) circle [x radius= 2.01, y radius= 2.01]   ;
		\draw    (480.84,74.32) -- (488.85,52.98) ;
		\draw [shift={(488.85,52.98)}, rotate = 290.58] [color={rgb, 255:red, 0; green, 0; blue, 0 }  ][fill={rgb, 255:red, 0; green, 0; blue, 0 }  ][line width=0.75]      (0, 0) circle [x radius= 2.01, y radius= 2.01]   ;
		\draw [shift={(480.84,74.32)}, rotate = 290.58] [color={rgb, 255:red, 0; green, 0; blue, 0 }  ][fill={rgb, 255:red, 0; green, 0; blue, 0 }  ][line width=0.75]      (0, 0) circle [x radius= 2.01, y radius= 2.01]   ;

		\draw (414.33,119.73) node [anchor=north west][inner sep=0.75pt]  [font=\small]  {$1a$};
		\draw (378,126.73) node [anchor=north west][inner sep=0.75pt]  [font=\small]  {$2c$};
		\draw (422,161.8) node [anchor=north west][inner sep=0.75pt]  [font=\small]  {$2d$};
		\draw (422,184.6) node [anchor=north west][inner sep=0.75pt]  [font=\small]  {$1f$};
		\draw (382.6,161.73) node [anchor=north west][inner sep=0.75pt]  [font=\small]  {$1e$};
		\draw (458.83,119.73) node [anchor=north west][inner sep=0.75pt]  [font=\small]  {$2a$};
		\draw (495.33,139.73) node [anchor=north west][inner sep=0.75pt]  [font=\small]  {$2b$};
		\draw (497.46,105.72) node [anchor=north west][inner sep=0.75pt]  [font=\small]  {$1c$};
		\draw (464.96,77.22) node [anchor=north west][inner sep=0.75pt]  [font=\small]  {$1d$};
		\draw (519.96,76.72) node [anchor=north west][inner sep=0.75pt]  [font=\small]  {$2e$};
		\draw (490.85,56.38) node [anchor=north west][inner sep=0.75pt]  [font=\small]  {$2f$};

	\end{tikzpicture}
\end{center}

 \caption{Figure 1. Two Distinguishably Equivalent Graphs.}

 \vspace{-1mm}
		    \label{Fig_1}
		\end{figure}

\noindent The automorphism representations for the graphs in Figure 1 is;
		\vspace{3mm}

\noindent $Aut(G_1, V(G)_{l_1})= Aut(G_2, V(G)_{l_2})= \{e,(1a,2a)(1b,2b)(1c,2c)(1d,2d)(1e,2e)(1f,2f)\}$

		\vspace{3mm}
\noindent Notice that if we were to switch the labels of $1a$ and $2c$ in $G_2$ leaving the remaining labels the same to form the labeling $l_3$, then $Aut(G_1, V(G)_{l_1}) \not= Aut(G_2, V(G)_{l_3})$. 

\newpage

		\begin{proposition}\label{P1} The following holds: 

        \begin{enumerate}
            \item[(i)] $Aut(G_1, V(G_1)) = Aut(G_2, V(G_2)) \Rightarrow Aut(G_1) \cong Aut (G_2)$,
             \item[(ii)] $Aut(G_1, V(G_1)) = Aut(G_2, V(G_2)) \Rightarrow |V(G_1)| = |V(G_2)|$,
            \item[(iii)] $Aut(G_1) \cong Aut (G_2) \not\Rightarrow Aut(G_1, V(G_1)) = Aut(G_2, V(G_2))$, 
            \item[(iv)] $|V(G_1)| = |V(G_2)| \not \Rightarrow Aut(G_1, V(G_1)) = Aut(G_2, V(G_2))$.
        \end{enumerate}
		\end{proposition}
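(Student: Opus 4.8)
The plan is to treat (i) and (ii) as the two genuine implications and to obtain (iii), (iv) as counterexamples that are essentially forced by (i) and (ii). The unifying observation, used throughout, is that a labeling $l\colon V(G)\to S$ is a bijection onto its image $l(V(G))$, and that the assignment $\alpha\mapsto l(\alpha)$ is nothing but conjugation by this bijection, i.e. $l(\alpha)=l\circ\alpha\circ l^{-1}$ regarded as a permutation of $l(V(G))$.

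For (i), I would first record that $\alpha\mapsto l(\alpha)$ is a group homomorphism, since $l(\alpha)l(\beta)=l(\alpha\beta)$, and that it is injective with image $Aut(G,V(G)_l)$; hence $Aut(G)\cong Aut(G,V(G)_l)$ for \emph{every} labeling $l$. Given distinguishable equivalence, I choose labelings $l_1,l_2$ into a common $S$ with $Aut(G_1,V(G_1)_{l_1})=Aut(G_2,V(G_2)_{l_2})$, and then chain the isomorphisms through the equal middle term:
\[
Aut(G_1)\cong Aut(G_1,V(G_1)_{l_1})=Aut(G_2,V(G_2)_{l_2})\cong Aut(G_2),
\]
which is exactly (i). For (ii) the crucial point is that each $l(\alpha)$ is a permutation of the whole set $l(V(G))$, so its fixed points are part of the datum even though cycle notation suppresses them. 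In particular $e\in Aut(G_i)$ maps to the identity permutation $\mathrm{id}_{l_i(V(G_i))}$, which lies in $Aut(G_i,V(G_i)_{l_i})$. Equality of the two representation sets forces $\mathrm{id}_{l_1(V(G_1))}=\mathrm{id}_{l_2(V(G_2))}$, and since an identity permutation determines its ground set we get $l_1(V(G_1))=l_2(V(G_2))$; as each $l_i$ is injective this yields $|V(G_1)|=|V(G_2)|$.

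For (iii) and (iv) I would exhibit explicit small counterexamples. For (iii), take $G_1=K_2$ and $G_2=P_3$: both have automorphism group $\mathbb{Z}_2$, yet $|V(G_1)|=2\neq 3=|V(G_2)|$, so by (ii) they cannot be distinguishably equivalent. To stress that the \emph{action}, not merely the abstract group, is what matters, I would also record the sharper example $G_1=P_4$ and $G_2=P_3\cup K_1$: both have four vertices with $Aut\cong\mathbb{Z}_2$, but the nontrivial automorphism is a product of two transpositions in $P_4$ and a single transposition in $P_3\cup K_1$; since relabelings act by conjugation and conjugation preserves cycle type, no choice of $l_1,l_2$ can equate the representations. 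For (iv), take $G_1=K_3$ and $G_2=P_3$: both have three vertices, but $Aut(K_3)\cong S_3\not\cong\mathbb{Z}_2\cong Aut(P_3)$, so by the contrapositive of (i) they are not distinguishably equivalent.

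The main obstacle is the subtlety hidden in (ii): one must be careful that $l(\alpha)$ genuinely records fixed points. If one kept only the written cycle structure (suppressing fixed points), (ii) would actually fail — for instance $K_2$ and $K_2\cup K_1$ would share the written representation $\{e,(xy)\}$ while differing in vertex count. The identity-automorphism argument above is precisely what pins down the common ground set and excludes this, so making this interpretation explicit is the real crux of the proof.
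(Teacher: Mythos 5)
Your proposal is correct and takes essentially the same approach as the paper: (i) by composing the relabeling isomorphisms $Aut(G_i)\cong Aut(G_i,V(G_i)_{l_i})$ through the equal middle term, (ii) by using the identity permutation, whose representation records all $|V(G_i)|$ fixed points (the paper phrases this as a contradiction between the numbers of single cycles in $l_1(e)$ and $l_2(e)$), (iii) by reducing to (ii) via a pair with isomorphic groups but different orders, and (iv) by reducing to (i) via exactly the paper's pair, the path $P_3$ versus the triangle $K_3$. Your extra example in (iii), $P_4$ versus $P_3\cup K_1$, where equal order and isomorphic groups are defeated by invariance of cycle type under relabeling, is a nice strengthening not present in the paper, as is your explicit remark that (ii) would fail if fixed points were suppressed from the representation.
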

		
			\begin{proof} (i) Suppose that $Aut(G_1, V(G_1)) = Aut(G_2, V(G_2)).$ Then, there exist some $ l_1: V(G_1) \rightarrow S$ and $l_2: V(G_2) \rightarrow S$ such that $ Aut(G_1, V(G_1)_{l_1}) = Aut(G_2, V(G_2)_{l_2})$. Now $\pi: Aut(G_1) \rightarrow Aut (G_2)$ defined by $\pi (\alpha) = \beta$	iff $l_1(\alpha) = l_2(\beta)$, is clearly an isomorphism from $Aut(G_1)$ to $Aut(G_2)$. \\
            
			\noindent (ii) Suppose that 	$Aut(G_1, V(G_2)) = Aut(G_2, V(G_2))$. Thus, there exist $\ l_1: V(G_1) \rightarrow S$ and $l_2: V(G_2) \rightarrow S$ such that $Aut(G_1, V(G_1)_{l_1}) = Aut(G_2, V(G_2)_{l_2})$. Suppose to get a contradiction that $|V(G_1)| = m$ and $|V(G_2)| = n$, where $n<m.$ Let $V(G_1) = \{ u_1, u_2, \dots, u_m\}$ and $V(G_2) = \{v_1, v_2, \dots, v_n\}.$ Then the identity $e$ is the unique automorphism fixing all the nodes of the graphs. However, $$l_1(e) = (l_1(u_1)) (l_1(u_2))\dots (l_1(u_m))  \in Aut(G_1, V(G_1)_{l_1})\setminus Aut(G_2, V(G_2)_{l_2}),$$  since $l_2(e)$ has different length of single cycles, which is a contradiction.\\
			
			\noindent (iii) See Example \ref{Ex_1}.\\

			\noindent (iv) Take $G_1$ to be a path on three nodes and $G_2$ to be a triangle on three nodes. Then $Aut (G_1) \cong S_2$ and $Aut (G_2) \cong S_3$. Thus $|Aut(G_1)| \not = |Aut (G_2)|$, hence $ Aut(G_1) \not\cong Aut (G_2).$ By Proposition \ref{P1}(i), we get that $Aut(G_1, V(G_1)) \not =   Aut(G_2, V(G_2))$.
		\end{proof}

		\begin{definition}
			Let $c: V(G) \rightarrow C$ be a coloring of $G$. An  {automorphism }$\varphi \in Aut(G, V(G)_l)$ is \textbf{non-monochromatically colored by \textit{c}}  or \textbf{broken by \textit{c}} if at least one of the cycles in its disjoint cycle representation, is non-monochromatically colored. 
			In particular, let  $$\varphi = (n_{11}, n_{12}\dots n_{1k_1})(n_{21}, n_{22} \dots n_{2k_2}) \dots (n_{m1}, n_{m2} \dots n_{mk_m}) \in Aut(G, V(G)_l),$$ where $n_{ij}$ labels the nodes of $G$ for some labeling $l: V(G) \rightarrow S$ of $G$. Then, $\varphi$ is broken by $c$   if there is a cycle, $  (n_{i1}, n_{i2}\dots n_{ik_i})$, such that $c(n_{ij}) \not = c(n_{i j+1})$, for some $j$  or $c(n_{ik_i}) \not = c(n_{i 1})$ and $k_i \geq 2$.   
		\end{definition}
		
			

		\begin{proposition}\label{P3}
	
	Let $l_1: V(G ) \rightarrow S_1$, $l_2: V(G) \rightarrow S_2$ be two labelings of $G$. Let $c: V(G) \rightarrow C$ be a coloring of $G$ and let $\varphi \in Aut(G)$. Then,  $l_1(\varphi) \in Aut(G, V(G)_{l_1})$ is broken by $c$ iff $l_2(\varphi) \in Aut(G, V(G)_{l_2})$ is broken by $c$.
\end{proposition}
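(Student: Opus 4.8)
The plan is to recognize that being ``broken by $c$'' is intrinsically a property of $\varphi$ viewed as a permutation of $V(G)$ together with the coloring $c$, and that a labeling contributes nothing beyond renaming the entries of the cycles. I would therefore start from the disjoint cycle decomposition of $\varphi$ written directly on the vertex set,
\[
\varphi = (v_{11}\,v_{12}\dots v_{1k_1})(v_{21}\,v_{22}\dots v_{2k_2})\cdots(v_{m1}\,v_{m2}\dots v_{mk_m}),\qquad v_{ij}\in V(G),
\]
which is canonical up to cyclic rotation inside each cycle and reordering of the cycles, and which makes no reference to any labeling. Fixing this decomposition once lets me index the cycles consistently for both representations.

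Next I would unwind the definition of $l(\varphi)$ to relate the two representations to this common decomposition. By the definition of the automorphism representation, applying a labeling $l$ to $\varphi$ simply substitutes $l(v_{ij})$ for $v_{ij}$ while leaving the cycle structure untouched, so that
\[
l_1(\varphi)=\prod_{i=1}^{m}\bigl(l_1(v_{i1})\dots l_1(v_{ik_i})\bigr),\qquad l_2(\varphi)=\prod_{i=1}^{m}\bigl(l_2(v_{i1})\dots l_2(v_{ik_i})\bigr).
\]
In particular $l_1(\varphi)$ and $l_2(\varphi)$ have identical cycle lengths, and there is a canonical bijection between their cycles: the $i$-th cycle of each arises from the single cycle $(v_{i1}\dots v_{ik_i})$ of $\varphi$.

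The one point that I must pin down is the meaning of $c(n_{ij})$ in the definition of ``broken,'' since $c$ is defined on $V(G)$ while each $n_{ij}$ is a name. The intended reading is that the color of a name $l(v)$ is the color of the underlying vertex, i.e. $c$ sees a label only through $l^{-1}$, so that $c(l(v))$ stands for $c(l^{-1}(l(v)))=c(v)$. With this convention the $i$-th cycle of $l_1(\varphi)$ is non-monochromatic exactly when the sequence $c(v_{i1}),\dots,c(v_{ik_i})$ fails to be constant, and the $i$-th cycle of $l_2(\varphi)$ is non-monochromatic under the very same condition, because both reduce to comparing the colors $c(v_{ij})$ of the underlying vertices.

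I would then finish by quantifying over $i$: since corresponding cycles are non-monochromatic precisely under identical conditions on $\varphi$ and $c$, some cycle of $l_1(\varphi)$ is non-monochromatic if and only if some cycle of $l_2(\varphi)$ is, which is exactly the assertion that $l_1(\varphi)$ is broken by $c$ iff $l_2(\varphi)$ is. The only genuine obstacle is conceptual rather than computational, namely committing to the interpretation of $c$ on labels as ``the color of the underlying vertex''; once that is fixed, the equivalence is a direct term-by-term comparison of the two cycle decompositions, and I expect no case analysis beyond the $k_i\geq 2$ condition already built into the definition.
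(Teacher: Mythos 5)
Your proposal is correct and follows essentially the same route as the paper's proof: both arguments rest on the observation that the disjoint cycle structure of $\varphi$ is intrinsic to it as a permutation of $V(G)$, that a labeling merely renames the entries of each cycle, and that whether a cycle is non-monochromatic depends only on the colors $c(v_{ij})$ of the underlying vertices. Your explicit handling of the convention that $c$ applied to a label means $c$ of the underlying vertex simply makes rigorous what the paper treats as immediate.
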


\begin{proof}
	This is immediate since labeling a graph does not change the automorphisms of it or the cycle structure of these automorphisms. If a coloring of the nodes breaks an automorphism, that is, the automorphism is not color-preserving, then at least two nodes in one of its cycles are colored differently and this holds regardless of the how the nodes are labeled.
\end{proof}		
		\begin{remark} Proposition \ref{P3} indicates that if we consider one graph only, then the labeling does not matter. \textit{However, to determine if two graphs are equivalent, one needs to find corresponding labelings for which the representations are equal (see Example \ref{E1}).}
	\end{remark}

		\begin{lemma}\label{L1}
			A graph $G$ is distinguished by a coloring 	$c: V(G) \rightarrow C$, iff for any labeling, $l:V(G) \rightarrow S$,   $Aut(G, V(G)_l) \setminus \{e \}$  is non-monochromatically colored by $c$.
		\end{lemma}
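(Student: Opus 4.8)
The plan is to unpack the definition of a distinguishing coloring and match it directly against the notion of \emph{broken} automorphisms, using Proposition \ref{P3} to dispose of the dependence on the chosen labeling. Recall that $c$ distinguishes $G$ precisely when the only automorphism $\varphi \in Aut(G)$ that preserves the coloring $c$ (meaning $c(v) = c(\varphi(v))$ for all $v$) is the identity. The key bridge is the observation that an automorphism $\varphi$ is color-preserving if and only if $c$ is constant on each cycle of $\varphi$, which is exactly the negation of $\varphi$ being broken by $c$. So ``$\varphi$ is broken by $c$'' is synonymous with ``$\varphi$ does not preserve $c$,'' and the lemma becomes a restatement of the definition of distinguishing once this equivalence is made precise.

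The forward direction proceeds as follows. Assume $c$ distinguishes $G$, and fix any labeling $l: V(G) \to S$. Take any $l(\varphi) \in Aut(G, V(G)_l) \setminus \{e\}$, so $\varphi \neq \mathrm{id}$. Since $c$ distinguishes $G$, the nontrivial $\varphi$ cannot preserve $c$, hence there exists $v$ with $c(v) \neq c(\varphi(v))$; tracking $v$ within its cycle of $\varphi$ shows that this cycle has length at least $2$ and contains two consecutive entries receiving different colors, i.e.\ $l(\varphi)$ is broken by $c$ in the sense of the definition. Thus every non-identity element of $Aut(G, V(G)_l)$ is non-monochromatically colored.

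For the converse, suppose that for some (equivalently, by Proposition \ref{P3}, for any) labeling $l$, every element of $Aut(G, V(G)_l) \setminus \{e\}$ is broken by $c$. Let $\varphi \in Aut(G)$ be a color-preserving automorphism. If $\varphi \neq \mathrm{id}$, then $l(\varphi) \in Aut(G, V(G)_l) \setminus \{e\}$, so by hypothesis $l(\varphi)$ is broken by $c$, giving a cycle with two consecutive differently colored entries; but those two entries correspond to nodes $v, \varphi(v)$ with $c(v) \neq c(\varphi(v))$, contradicting that $\varphi$ preserves $c$. Hence $\varphi = \mathrm{id}$, and $c$ distinguishes $G$.

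The only subtlety worth flagging, and the step I would be most careful about, is the handling of the quantifier ``for any labeling.'' The statement of the lemma asserts the property holds for \emph{every} labeling, yet the natural proof only needs it to hold for \emph{one}; this is reconciled precisely by Proposition \ref{P3}, which guarantees that brokenness of $l(\varphi)$ is independent of $l$. I would therefore invoke Proposition \ref{P3} explicitly to pass freely between the universal and existential readings. Beyond this, the argument is essentially definitional bookkeeping: the real content is the identification of ``broken by $c$'' with ``fails to preserve $c$,'' and once that is stated cleanly the two implications are immediate.
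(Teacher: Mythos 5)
Your proof is correct and follows essentially the same route as the paper's: both directions unpack the definition of a distinguishing coloring via the equivalence between ``$\varphi$ preserves $c$'' and ``$c$ is constant on each cycle of $\varphi$.'' Your explicit appeal to Proposition \ref{P3} to handle the ``for any labeling'' quantifier is a nice touch of rigor that the paper leaves implicit, but it does not change the substance of the argument.
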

		
			\begin{proof}
			Suppose that $G$ is distinguished by a coloring $c: V(G) \rightarrow C$. Let $\alpha$ be such that $\alpha \not = e$ and $\alpha = (n_{11}, n_{12}\dots n_{1k_1})(n_{21}, n_{22} \dots n_{2k_2}) \dots (n_{m1}, n_{m2} \dots n_{mk_m}) \in Aut(G, V(G)_l)$, where $n_{ij} = l(v_i)$ for some labeling $l: V(G) \rightarrow S$ of $G$. Since $c$ is a distinguishing coloring, $\alpha$ does not preserve the color of the nodes. Therefore,  there must exist a cycle which is non-monochromatically colored. So, we get that $Aut(G, V(G)_l)$ is non-monochromatically colored.

			Conversely, suppose that every non-identity permutation is broken by any labeling. Therefore, for any non-identity $$\alpha = (n_{11}, n_{12}\dots n_{1k_1})(n_{21}, n_{22} \dots n_{2k_2}) \dots (n_{m1}, n_{m2} \dots n_{mk_m}) \in  Aut(G, V(G)_l),$$ where $n_{ij} = l(v_i)$ for some labeling $l: V(G) \rightarrow S$ of $G$, at least one cycle of size $\geq 2$ must be non-monochromatically colored.  Consequently, $\alpha$ does not respect the coloring, so $c$ is distinguishing.
		\end{proof}	

        \noindent In the following example, we will see a case of two graphs with isomorphic automorphism groups but different automorphism representations.
			
	\begin{example}\label{Ex_1}		
		Let $G_1$ and  $G_2$ be the graphs with labelings  from Figure \ref{Fig_2}. Then, $Aut(G_1) \cong Aut (G_2) \cong S_3$,  but  $D(G_1) = 3$ and $D(G_2)= 2$, see the coloring in Figure 2 for an illustration.  However, as $|V(G_1)| \not =|V(G_2)|$, using Proposition \ref{P1} (ii), we get $Aut(G_1, V(G_1)) \not = Aut(G_2, V(G_2))$.

\begin{center}

    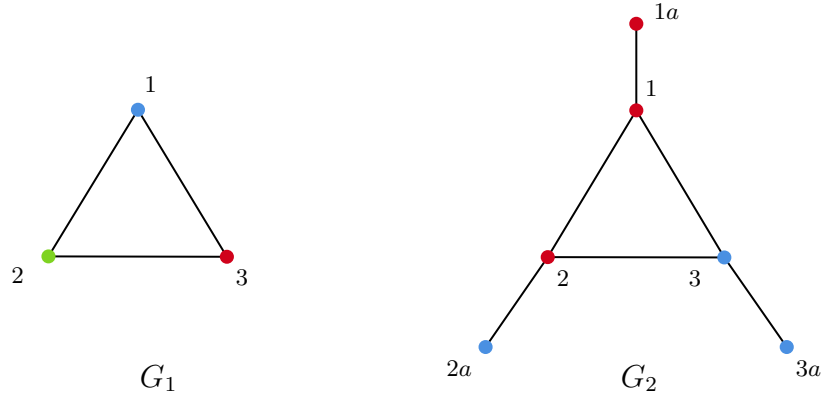
\begin{figure}[H]
        \centering
       
\tikzset{every picture/.style={line width=0.75pt}} 
\hspace{-2cm}
\begin{tikzpicture}[x=0.75pt,y=0.75pt,yscale=-1.5,xscale=1.5]
	
	\draw  [color={rgb, 255:red, 255; green, 255; blue, 255 }  ,draw opacity=1 ][line width=3] [line join = round][line cap = round] (115.5,187.25) .. controls (115.5,187.25) and (115.5,187.25) .. (115.5,187.25) ;
	\draw  [color={rgb, 255:red, 255; green, 255; blue, 255 }  ,draw opacity=1 ][line width=3] [line join = round][line cap = round] (121.5,266.75) .. controls (121.5,266.75) and (121.5,266.75) .. (121.5,266.75) ;
	\draw   (250.32,70.43) -- (279.91,119.73) -- (220.54,119.61) -- cycle ;
	\draw [color={rgb, 255:red, 208; green, 2; blue, 27 }  ,draw opacity=1 ]   (279.91,119.73) ;
	\draw [shift={(279.91,119.73)}, rotate = 0] [color={rgb, 255:red, 208; green, 2; blue, 27 }  ,draw opacity=1 ][fill={rgb, 255:red, 208; green, 2; blue, 27 }  ,fill opacity=1 ][line width=0.75]      (0, 0) circle [x radius= 2.01, y radius= 2.01]   ;
	\draw [color={rgb, 255:red, 126; green, 211; blue, 33 }  ,draw opacity=1 ]   (220.54,119.61) ;
	\draw [shift={(220.54,119.61)}, rotate = 0] [color={rgb, 255:red, 126; green, 211; blue, 33 }  ,draw opacity=1 ][fill={rgb, 255:red, 126; green, 211; blue, 33 }  ,fill opacity=1 ][line width=0.75]      (0, 0) circle [x radius= 2.01, y radius= 2.01]   ;
	\draw   (416.16,70.64) -- (445.45,119.96) -- (386.69,119.85) -- cycle ;
	\draw    (416.16,41.6) -- (416.16,70.3) ;
	\draw [shift={(416.16,70.64)}, rotate = 90] [color={rgb, 255:red, 0; green, 0; blue, 0 }  ][line width=0.75]      (0, 0) circle [x radius= 1.34, y radius= 1.34]   ;
	\draw [shift={(416.16,41.6)}, rotate = 90] [color={rgb, 255:red, 0; green, 0; blue, 0 }  ][fill={rgb, 255:red, 0; green, 0; blue, 0 }  ][line width=0.75]      (0, 0) circle [x radius= 1.34, y radius= 1.34]   ;
	\draw    (445.45,119.96) -- (466.22,150) ;
	\draw [fill={rgb, 255:red, 208; green, 2; blue, 27 }  ,fill opacity=1 ]   (386.69,119.85) -- (366,150) ;
	\draw [color={rgb, 255:red, 74; green, 144; blue, 226 }  ,draw opacity=1 ]   (250.32,70.43) ;
	\draw [shift={(250.32,70.43)}, rotate = 0] [color={rgb, 255:red, 74; green, 144; blue, 226 }  ,draw opacity=1 ][fill={rgb, 255:red, 74; green, 144; blue, 226 }  ,fill opacity=1 ][line width=0.75]      (0, 0) circle [x radius= 2.01, y radius= 2.01]   ;
	\draw [color={rgb, 255:red, 74; green, 144; blue, 226 }  ,draw opacity=1 ]   (366,150) ;
	\draw [shift={(366,150)}, rotate = 0] [color={rgb, 255:red, 74; green, 144; blue, 226 }  ,draw opacity=1 ][fill={rgb, 255:red, 74; green, 144; blue, 226 }  ,fill opacity=1 ][line width=0.75]      (0, 0) circle [x radius= 2.01, y radius= 2.01]   ;
	\draw [color={rgb, 255:red, 208; green, 2; blue, 27 }  ,draw opacity=1 ]   (386.69,119.85) ;
	\draw [shift={(386.69,119.85)}, rotate = 0] [color={rgb, 255:red, 208; green, 2; blue, 27 }  ,draw opacity=1 ][fill={rgb, 255:red, 208; green, 2; blue, 27 }  ,fill opacity=1 ][line width=0.75]      (0, 0) circle [x radius= 2.01, y radius= 2.01]   ;
	\draw [color={rgb, 255:red, 74; green, 144; blue, 226 }  ,draw opacity=1 ]   (466.22,150) ;
	\draw [shift={(466.22,150)}, rotate = 0] [color={rgb, 255:red, 74; green, 144; blue, 226 }  ,draw opacity=1 ][fill={rgb, 255:red, 74; green, 144; blue, 226 }  ,fill opacity=1 ][line width=0.75]      (0, 0) circle [x radius= 2.01, y radius= 2.01]   ;
	\draw [color={rgb, 255:red, 74; green, 144; blue, 226 }  ,draw opacity=1 ]   (445.45,119.96) ;
	\draw [shift={(445.45,119.96)}, rotate = 0] [color={rgb, 255:red, 74; green, 144; blue, 226 }  ,draw opacity=1 ][fill={rgb, 255:red, 74; green, 144; blue, 226 }  ,fill opacity=1 ][line width=0.75]      (0, 0) circle [x radius= 2.01, y radius= 2.01]   ;
	\draw [color={rgb, 255:red, 208; green, 2; blue, 27 }  ,draw opacity=1 ]   (416.16,70.64) ;
	\draw [shift={(416.16,70.64)}, rotate = 0] [color={rgb, 255:red, 208; green, 2; blue, 27 }  ,draw opacity=1 ][fill={rgb, 255:red, 208; green, 2; blue, 27 }  ,fill opacity=1 ][line width=0.75]      (0, 0) circle [x radius= 2.01, y radius= 2.01]   ;
	\draw [color={rgb, 255:red, 208; green, 2; blue, 27 }  ,draw opacity=1 ]   (416.16,41.6) ;
	\draw [shift={(416.16,41.6)}, rotate = 0] [color={rgb, 255:red, 208; green, 2; blue, 27 }  ,draw opacity=1 ][fill={rgb, 255:red, 208; green, 2; blue, 27 }  ,fill opacity=1 ][line width=0.75]      (0, 0) circle [x radius= 2.01, y radius= 2.01]   ;
	
	\draw (352,153.4) node [anchor=north west][inner sep=0.75pt]  [font=\scriptsize]  {$2a$};
	\draw (207.21,122.35) node [anchor=north west][inner sep=0.75pt]  [font=\scriptsize]  {$2$};
	\draw (468.22,153.4) node [anchor=north west][inner sep=0.75pt]  [font=\scriptsize]  {$3a$};
	\draw (251.5,58.4) node [anchor=north west][inner sep=0.75pt]  [font=\scriptsize]  {$1$};
	\draw (281.91,123.13) node [anchor=north west][inner sep=0.75pt]  [font=\scriptsize]  {$3$};
	\draw (418.16,59.52) node [anchor=north west][inner sep=0.75pt]  [font=\scriptsize]  {$1$};
	
	\draw (410,155) node [anchor=north west][inner sep=0.75pt]  []  {$G_2$};
	
		\draw (250,155) node [anchor=north west][inner sep=0.75pt]  []  {$G_1$};
	\draw (421,33.73) node [anchor=north west][inner sep=0.75pt]  [font=\scriptsize]  {$1a$};
	\draw (388.69,123.25) node [anchor=north west][inner sep=0.75pt]  [font=\scriptsize]  {$2$};
	\draw (432.67,123.4) node [anchor=north west][inner sep=0.75pt]  [font=\scriptsize]  {$3$};
\end{tikzpicture}
\vspace{-4cm}

\caption{Figure 2. Two Graphs with Isomorphic Automorphism Groups but Unequal \phantom{aaaaaaaaaaaaaaaa} Automorphism Representations. }
        \label{Fig_2}
        
\begin{alignat*}{2}
&Aut(G_1, V(G_1)l_1) =\ && \big\{e, (\textcolor{blue}{1},\textcolor{green}{2}), (\textcolor{blue}{1},\textcolor{red}{3}), (\textcolor{green}{2},\textcolor{red}{3}), (\textcolor{blue}{1},\textcolor{red}{3},\textcolor{green}{2}), (\textcolor{blue}{1},\textcolor{green}{2},\textcolor{red}{3})\big\}\\
 &Aut(G_2, V(G_2)l_2) =\ && \big\{e, (\textcolor{red}{1},\textcolor{red}{2})(\textcolor{red}{1a}, \textcolor{blue}{2a}), (\textcolor{red}{1},\textcolor{blue}{3})(\textcolor{red}{1a},\textcolor{blue}{3a}), (\textcolor{red}{2},\textcolor{blue}{3})(\textcolor{blue}{2a},\textcolor{blue}{3a}),\\ &  &&(\textcolor{red}{1},\textcolor{red}{2},\textcolor{blue}{3})(\textcolor{red}{1a},\textcolor{blue}{2a},\textcolor{blue}{3a}), (\textcolor{red}{1},\textcolor{blue}{3},\textcolor{red}{2})(\textcolor{red}{1a},\textcolor{blue}{3a},\textcolor{blue}{2a})\big\}.
\end{alignat*}

    \end{figure}
\end{center}
\vspace{-8mm}
\noindent Notice that  a distinguishing coloring induces a non-monochromatic coloring of all of the non-identity permutations in a labeled graph's automorphism representation. Conversely, any non-monochromatic coloring of a labeled graph's automorphism representation induces a distinguishing coloring. \\
	\end{example}

   \noindent  While the previous example shows that two graphs having \textit{isomorphic automorphism groups is not enough}  to conclude that their distinguishing numbers are equal, the following result shows, that two graphs having \textit{equal automorphism representations is enough} to force that their distingushing numbers are equal.

		\begin{theorem} \label{T1} Let $G_1$ and $G_2$ be two graphs. Then, 
			$$Aut(G_1, V(G_1)) = Aut(G_2, V(G_2)) \implies D(G_1) = D(G_2).$$
		\end{theorem}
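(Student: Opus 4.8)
The plan is to use Lemma \ref{L1} to reduce both $D(G_1)$ and $D(G_2)$ to one and the same combinatorial condition on a single permutation group, and then to exploit the hypothesis that the two automorphism representations are \emph{literally the same} set of permutations. First I would unpack the hypothesis: $Aut(G_1, V(G_1)) = Aut(G_2, V(G_2))$ means there are labelings $l_1 : V(G_1) \rightarrow S$ and $l_2 : V(G_2) \rightarrow S$ with $Aut(G_1, V(G_1)_{l_1}) = Aut(G_2, V(G_2)_{l_2})$ as sets of permutations of symbols in $S$.

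The first technical point is to pin down a common ground set. Since each $Aut(G_i, V(G_i)_{l_i})$ is, via the injective relabeling $\alpha \mapsto l_i(\alpha)$, a permutation group whose identity element is written as the product of $1$-cycles over all of $l_i(V(G_i))$, equality of the two sets forces $l_1(e) = l_2(e)$ and hence $l_1(V(G_1)) = l_2(V(G_2)) =: S_0$. (This also recovers Proposition \ref{P1}(ii).) Thus both $l_1$ and $l_2$ are bijections onto $S_0$, and I may regard the common representation as a single permutation group $H$ acting on $S_0$.

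Next I would transport colorings across the correspondence $\phi := l_2^{-1} \circ l_1 : V(G_1) \rightarrow V(G_2)$. Given a coloring $c_1 : V(G_1) \rightarrow C$, set $c_2 := c_1 \circ \phi^{-1} : V(G_2) \rightarrow C$; these use the same color set, and a short computation shows they induce the \emph{same} coloring $\tilde c : S_0 \rightarrow C$ of the symbols, namely $\tilde c(s) = c_1(l_1^{-1}(s)) = c_2(l_2^{-1}(s))$. The key observation, which is essentially the content of Proposition \ref{P3}, is that whether a permutation $\sigma \in H$ is broken is a statement \emph{only} about the cycle structure of $\sigma$ on $S_0$ together with the coloring $\tilde c$ of $S_0$: reading colors through $l_1$, the element $\sigma$ is broken by $c_1$ precisely when some cycle of $\sigma$ is non-monochromatic under $\tilde c$, and reading colors through $l_2$ gives the identical criterion for $\sigma$ being broken by $c_2$. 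Hence a given non-identity $\sigma \in H$ is broken by $c_1$ iff it is broken by $c_2$.

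Finally I would assemble the argument via Lemma \ref{L1}. That lemma says $c_1$ distinguishes $G_1$ iff every non-identity element of $Aut(G_1, V(G_1)_{l_1}) = H$ is broken by $c_1$, and likewise $c_2$ distinguishes $G_2$ iff every non-identity element of $H$ is broken by $c_2$. By the previous paragraph these two conditions coincide, so $c_1$ distinguishes $G_1$ iff $c_2$ distinguishes $G_2$, and $c_1 \mapsto c_2$ is a color-count-preserving bijection between the distinguishing colorings of $G_1$ and those of $G_2$. Taking minima yields $D(G_1) = D(G_2)$. I expect the only genuine obstacle to be the bookkeeping in the first two steps, namely forcing the common symbol set $S_0$ and transporting a coloring of vertices to a well-defined coloring $\tilde c$ of symbols; once that is done the distinguishing criterion is identical for the two graphs by construction, since they share the very same permutation group $H$ on $S_0$.
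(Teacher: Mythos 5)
Your proposal is correct and follows essentially the same route as the paper: both transfer a distinguishing coloring between the two graphs through the labelings $l_1, l_2$, and both invoke Lemma \ref{L1} to conclude that the transferred coloring breaks exactly the same set of non-identity permutations in the common representation, hence distinguishes the other graph. Your write-up is slightly more careful than the paper's (you explicitly derive the common symbol set $S_0$ from the identity permutation, and you conclude via a color-count-preserving bijection of distinguishing colorings rather than the paper's contradiction argument for the reverse inequality), but the underlying argument is the same.
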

		
			\begin{proof}
			Suppose that $Aut(G_1, V(G_1)) = Aut(G_2, V(G_2))$ and that $D(G_1) = k_1$. Therefore, there exists a labelings $l_1$ and $l_2$ such that $Aut(G_1, V(G_1)_{l_1} = Aut(G_1, V(G_1)_{l_2})$ for some labeling, $l_1$ and $l_2$ of $G_1$ and $G_2$ respectively. Also, there exists a coloring using $k_1$ colors which distinguishes $G_1$. We can consider this coloring on the $l_1$ labeled nodes and $k_1: l_1(V(G_1)) \rightarrow C$. By Lemma \ref{L1}, this coloring 
			breaks all of the non-identity permutations in $Aut(G_1, V(G_1)_{l_1})$. This induces a corresponding coloring to $G_2$ with any labeling of its nodes, in particular the $l_2$ labeling, given by $k_2: l_2(V(G_2)) \rightarrow C$ defined by, $k_2(l_2(v)) = k_1(l_1(v))$ and  this coloring breaks all of the non-identity permutations in $Aut(G_2, V(G_2)_{l_2})$ as $Aut(G_1, V(G_1))_{l_1} = Aut(G_2, V(G_2))_{l_2}$. Using Lemma \ref{L1} again, we get that this coloring is a distinguishing coloring for $G_2$. So $D(G_2) \leq k_1$. Assume to get a contradiction, that there exists a distinguishing coloring of $G_2$ using $k_2$ colors, where $k_2 < k_1$. Then similarly by Lemma \ref{L1}, this coloring would induce a corresponding $k_2$-distinguishing coloring of $G_1$,  which is a contradiction.
		\end{proof}

		    
		

		\noindent  Now, if $H$ is an induced subgraph of $G$, then a distinguishing coloring of $G$ when restricted to $H$ need not be a distinguishing coloring of $H$. However, when the nodes of $H$  have the same neighbours in $G \setminus H$, then this holds:

		\begin{lemma}\label{L2}
			Let $G$ be a graph and $H$ an induced subgraph of $G$ such that for any $h_1, \ h_2 \in H$, $N(h_1) \setminus V(H) = N(h_2) \setminus V(H)$.  If a coloring $k:V(G) \rightarrow C$ is a distinguishing coloring of $G$, then  $k\restriction V(H)$ is a distinguishing coloring of $H$, where $N(h_i) = \{z: \{h_i, z\} \in E(G)\}$.
		\end{lemma}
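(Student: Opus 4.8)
The plan is to argue by contraposition: I will assume that $k \restriction V(H)$ fails to distinguish $H$ and then manufacture a nontrivial color-preserving automorphism of $G$, contradicting that $k$ distinguishes $G$. Concretely, suppose there is a non-identity $\sigma \in Aut(H)$ preserving the restricted coloring, that is, $k(\sigma(h)) = k(h)$ for every $h \in V(H)$. The whole argument will hinge on lifting $\sigma$ to $G$.

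The central construction is the permutation $\hat\sigma$ of $V(G)$ acting as $\sigma$ on $V(H)$ and as the identity on $V(G)\setminus V(H)$. The first step is to verify that $\hat\sigma \in Aut(G)$. Edges with both endpoints in $H$ are preserved because $H$ is an \emph{induced} subgraph and $\sigma \in Aut(H)$; edges with both endpoints in $V(G)\setminus V(H)$ are fixed pointwise, hence preserved. The only delicate case is a cross edge $\{h,z\}$ with $h \in V(H)$ and $z \in V(G)\setminus V(H)$, i.e. $z \in N(h)\setminus V(H)$. This is exactly where the hypothesis is used: since all vertices of $H$ share the same external neighborhood, $N(h)\setminus V(H) = N(\sigma(h))\setminus V(H)$, so $z$ is adjacent to $\sigma(h)$ as well, and $\hat\sigma$ carries the edge $\{h,z\}$ to the edge $\{\sigma(h),z\}$. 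Applying the same equality to \emph{non}-neighbors shows that non-edges across the cut are likewise preserved, so $\hat\sigma$ is a genuine graph automorphism of $G$.

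The second step is to check color preservation. On $V(G)\setminus V(H)$ this is immediate, since $\hat\sigma$ fixes those vertices; on $V(H)$ it is precisely the assumption $k(\sigma(h)) = k(h)$. Finally, because $\sigma \neq \mathrm{id}_H$, the lift $\hat\sigma$ is a non-identity automorphism of $G$ that preserves $k$, contradicting that $k$ is a distinguishing coloring of $G$. Hence no such $\sigma$ exists and $k \restriction V(H)$ distinguishes $H$.

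I expect the main (and essentially only) obstacle to be the verification that the cross edges behave correctly under $\hat\sigma$, which is exactly the role of the uniform-external-neighborhood hypothesis; everything else is bookkeeping. The one point demanding care is to invoke the hypothesis in \emph{both} directions — preserving adjacency and non-adjacency across the cut — so that $\hat\sigma$ is a true automorphism rather than merely an edge-preserving map on a subset.
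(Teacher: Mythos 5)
Your proposal is correct and follows essentially the same route as the paper: both construct the lift of an automorphism of $H$ to $G$ by extending with the identity on $V(G)\setminus V(H)$, and both verify it is an automorphism by the same three-case edge analysis, with the uniform-external-neighborhood hypothesis handling the cross edges and non-edges. The only difference is presentational — you phrase the coloring step as a direct contraposition (a color-preserving $\sigma$ lifts to a color-preserving $\hat\sigma$), whereas the paper routes the same fact through its Lemma \ref{L1} on non-monochromatically colored cycles.
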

		
		\begin{proof}
			Let $\varphi$ be a non-identity automorphism of $H$. Let  $\varphi_G: V(G) \rightarrow V(G)$ be defined by $\varphi_G(v)= \varphi(v) \ \text{if} \ v \in V(H)$ and 	 $\varphi_G(v)= v \ \text{if} \ v \in V(G) \setminus V(H)$.
			
			\begin{claim}
			    
			 $\varphi_G$ is an automorphism of $G$.\end{claim}
			
			\begin{proof}[Proof of the claim:]
			     To see this, note that there are 3 types of edges and non-edges in $G$: (i) Edges and non-edges in $H$, (ii) Edges and non-edges in $G \setminus H$ and (iii) Edges and non-edges between $H$ and $G$. The first two types of edge and non-edges  are clearly preserved by $\varphi_G$. The third type is preserved since $N(h_1) \setminus V(H) = N(h_2) \setminus V(H)$ for any $h_1, \ h_2 \in V(H)$,  implies that for  $v \in V(G \setminus H)$, $\{h_1, v\} \in E(G)$ iff $ \{h_2, v\} \in E(G)$. Overall, since $\varphi_G$ preserves all edges and non-edges of $G$, it is an automorphism of $G$. \end{proof}

            \noindent Let $l: V(G) \rightarrow S$ be an injective map labeling the nodes of $G$ and let $l_H$ be this map restricted to the nodes of $H$. Let $l(V(G) \setminus V(H)) = \{v_1, \dots, v_k\}$ be the $l$-labeling of the nodes in $G \setminus H$. Since  $\varphi \in Aut(H, V(H)_l)$, thus $ \varphi_G =  (v_1)\dots(v_k)\varphi \in Aut(G, V(G)_l)$ by the previous claim. By Lemma \ref{L1}, as $k$ is a distinguishing coloring of $G$, it must non-monochromatically color $\varphi_G$. Observe that, as $\varphi_G$ has the same disjoint cycle representation as $\varphi$ apart from single cycles,  $k$ non-monochromatically colors $\varphi_G$ iff $k$ non-monochromatically color $\varphi$. Since  $k$ is a distinguishing coloring, it  non-monochromatically colors all elements in $Aut(G, V(G)_l) \setminus \{e\} $ by Lemma \ref{L1}. In particular,  $k$ breaks all the  colors  in $\{\varphi_G:  \varphi \in Aut(H, V(H)_{l_H})\setminus\{e\}  \}$. Hence,  $k$ non-monochromatically  colors all elements in $Aut(H, V(H)_{l_H}) \setminus \{e\}$ and we get that $k\restriction V(H)$ is a distinguishing coloring of $H$ by Lemma \ref{L1}.
		\end{proof}

		\section{Main Theorem}
\noindent Let us introduce the concepts required to prove our main theorem.   
\begin{definition}\label{D5}
			Let $G= (V(G), E(G))$ be a graph. A subset $S \subseteq V(G)$ is said to be a  \textbf{determining set} for $G$ if
			whenever $\varphi \in Aut(G)$ so that $\varphi(x) = x$ for all $x 
			\in S$, then $\varphi$ is the identity. The \textbf{determining number}, $Det(G)$, of $G$ is the minimum size of a determining set for $G$. 	Let $G$ be a 2-distinguishable
			graph. Call a color class in a 2-distinguishing coloring of $G$ a \textbf{distinguishing class} and the size of the smaller color class, \textbf{the cost of the coloring}. Then, the minimum size of a distinguishing class of $G$
			is called the \textbf{cost number} of $G$ and is denoted by $\rho(G)$.
		\end{definition}

		\noindent Our main result is to answer the question of Boutin mentioned in the introduction when $Det(G)=2$. In particular, we prove the following result:
	\begin{manualtheorem}{\ref{main_thm}}
		\textit{Let $G$ be a graph with $Det(G)= 2,  D(G) = 2$, then $2\leq\rho(G) \leq4$.}
	\end{manualtheorem}
        
		\noindent  We will make use of the following lemma from  \cite{Bou2013}:
  
			\begin{lemma}\label{L3} (Lemma 1 of \cite{Bou2013})
			A subset of vertices $S$ is a distinguishing class for $G$ iff $S$ is a determining set for $G$ with the property that every automorphism that fixes $S$ setwise, also fixes it pointwise. That is, if $\varphi \in Aut(G)$ and $\varphi(S) = S$, then $\varphi(s) = s$, for  $s\in S$. 
		\end{lemma}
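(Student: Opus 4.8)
The plan is to reduce the statement to the single observation that a 2-distinguishing coloring whose color class is $S$ is nothing more than a coloring that no non-trivial automorphism preserves, and that an automorphism preserves the two-coloring $(S, V(G)\setminus S)$ precisely when it stabilizes $S$ \emph{setwise}. First I would record this translation explicitly: color every vertex of $S$ with one color and every other vertex with the other; an automorphism $\varphi$ respects this coloring iff $\varphi(S) \subseteq S$ and $\varphi(V(G)\setminus S) \subseteq V(G)\setminus S$, which, since $\varphi$ is a bijection, collapses to the single condition $\varphi(S) = S$. Thus \textbf{$S$ is a distinguishing class iff the only $\varphi \in Aut(G)$ with $\varphi(S)=S$ is the identity}; call this property $(\ast)$. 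The whole lemma then reads: $(\ast)$ holds iff $S$ is a determining set and every setwise stabilizer of $S$ fixes $S$ pointwise.

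For the forward direction, I would assume $(\ast)$ and verify both conditions. If $\varphi \in Aut(G)$ fixes $S$ pointwise, then in particular $\varphi(S)=S$, so $(\ast)$ gives $\varphi = e$; hence $S$ is a determining set. And if $\varphi(S)=S$ setwise, then $(\ast)$ again forces $\varphi = e$, which trivially fixes $S$ pointwise, establishing the setwise-to-pointwise property. Both required conditions drop out immediately.

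For the converse, I would assume $S$ is a determining set and that every $\varphi$ with $\varphi(S)=S$ fixes $S$ pointwise, then take an arbitrary $\varphi$ with $\varphi(S)=S$. The setwise-to-pointwise hypothesis makes $\varphi$ fix every element of $S$; the determining-set hypothesis then forces $\varphi = e$. Hence the only setwise stabilizer of $S$ is the identity, which is exactly $(\ast)$, so $S$ is a distinguishing class.

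The argument is entirely definitional, so the only point requiring care is the translation step: confirming that preserving a two-coloring is the same as setwise-stabilizing one of the two classes, and keeping straight that ``determining set'' refers to \emph{pointwise} fixing while ``distinguishing class'' corresponds to \emph{setwise} stabilization. The content of the lemma is precisely the bridge between these two notions of stabilization, and once that distinction is made explicit there is no further obstacle to overcome.
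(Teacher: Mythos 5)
Your proof is correct: the translation of ``distinguishing class'' into the setwise-stabilizer condition $(\ast)$ is exact, and both directions then follow immediately from the definitions of determining set and of preserving a two-coloring. The paper itself states this lemma without proof, citing it as Lemma 1 of Boutin's paper, and your definitional argument is essentially the standard one given there, so there is nothing to reconcile.
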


\noindent\textbf{Notation.} Throughout the following, we  consider a fixed labeling of the nodes of $G$. As this context is fixed, we will simplify the notation and when a permutation belongs to $Aut(G, V(G)_l)$ we will state that it belongs to $Aut(G)$. If the images of a permutation are partially known, it will be written in a form denoting those images but with a variable for the remaining images. For example, if  for some $\varphi\in Aut(G)$ it is known that $\varphi(x) = y$ and $\varphi(y) = x$, but the other images are unknown, then we may write $\varphi = (xy) \alpha$, where $\alpha$ represents the remaining disjoint cycles of the permutation. If a permutation is written without a variable encoding its extension, e.g. $(xy)$, then the rest of the images are assumed to be single cycles. Also, in this paper, we follow the convention of multiplying the cycles from the \textit{rightmost cycle going towards the left} (treating them like a functions). Lastly, we always consider permutations in disjoint cycle form.\\

\noindent Before proving Theorem \ref{main_thm} we will need several technical propositions and lemmas on graphs with determining number 2.

	\begin{proposition}\label{P4}
		Let $G$ be a graph. If $\{x, y\} \subseteq V(G)$ is a determining set of $G$, then there is no $\varphi\in Aut(G)\setminus \{e\} $ such that $ \varphi = (x)(y) \alpha$, for any $\alpha$, where $e$ is the identity.
		
	\end{proposition}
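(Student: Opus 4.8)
The plan is to argue by contradiction directly from the definition of a determining set, since the statement is essentially a reformulation of that definition in the paper's cycle notation. First I would unpack what the expression $\varphi = (x)(y)\alpha$ means under the stated convention: writing $x$ and $y$ as singleton cycles records precisely that $\varphi(x) = x$ and $\varphi(y) = y$, while $\alpha$ collects the remaining disjoint cycles and is irrelevant to the fixed-point behavior on $\{x,y\}$. So the hypothesis ``there exists such a $\varphi \neq e$'' is exactly the assertion that some non-identity automorphism fixes both $x$ and $y$ pointwise.

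Next I would invoke Definition \ref{D5}. Suppose, toward a contradiction, that some $\varphi \in Aut(G) \setminus \{e\}$ has the form $\varphi = (x)(y)\alpha$. Then $\varphi$ fixes every element of the set $S = \{x,y\}$. Because $S$ is assumed to be a determining set of $G$, the definition forces any automorphism fixing all of $S$ to be the identity, so $\varphi = e$. This contradicts the assumption $\varphi \neq e$, and the proposition follows.

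I do not expect any genuine obstacle here: the result is an immediate consequence of the definition of a determining set, with the only (minor) subtlety being the translation between the pointwise-fixing language of Definition \ref{D5} and the disjoint-cycle notation $(x)(y)\alpha$ introduced in the preceding notation paragraph. For clarity I would make that translation explicit in one sentence and then quote the determining-set property, keeping the argument to a few lines. The value of stating it as a separate proposition is that later proofs can cite this clean ``no nontrivial automorphism fixes $x$ and $y$'' form without re-deriving it each time.
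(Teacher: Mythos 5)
Your proposal is correct and matches the paper's own proof: both arguments simply observe that $\varphi = (x)(y)\alpha$ fixes $x$ and $y$ pointwise, so the defining property of a determining set forces $\varphi = e$, contradicting $\varphi \neq e$. The only cosmetic difference is that the paper also notes $\varphi(\{x,y\}) = \{x,y\}$ in passing, which is not needed; your explicit translation of the cycle notation into the pointwise-fixing hypothesis is exactly the intended reading.
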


	\begin{proof}
		
		If such a $\varphi$ exists, then $\varphi(\{x, y\}) = \{x, y\}$, but $\varphi(x) = x$ and $\varphi(y) = y$. Since $\varphi \not = e$, this contradicts that $\{x, y\}$ is a determining set of $G$.	
	\end{proof}

	\begin{proposition}	\label{P5}
		Let $G$ be a graph and $\{x,y\}\subseteq V(G)$ be a determining set of $G$. Then
	\begin{alignat*}{4}
	(i)\ (xy) \alpha \in Aut(G) \text{ for some } \alpha\ \   &   \implies \alpha = (v_{11} v_{12})(v_{21} v_{22})\dots(v_{m1} v_{m2})(v_1) \dots(v_n),\\
&\text{ for some $m, n$}  \text{$\in \mathbb{N}\setminus\{0\}$,   $v_{ij}=v_{kl}$ implies $i=j$,  $k=l$}\\
(ii)\	(xv)(y) \alpha \in Aut(G) \text{ for some } &\alpha  \implies \alpha = (v_{11} v_{12})(v_{21} v_{22})\dots(v_{m1} v_{m2})(v_1) \dots(v_n),\\
&\text{ for some $m, n$}  \text{$\in \mathbb{N}\setminus\{0\}$,   $v_{ij}=v_{kl}$ implies $i=j$,  $k=l$}\\
(iii)\	(yv)(x) \alpha \in Aut(G) \text{ for some } &\alpha  \implies \alpha = (v_{11} v_{12})(v_{21} v_{22})\dots(v_{m1} v_{m2})(v_1) \dots(v_n),\\
&\text{ for some $m, n$}  \text{$\in \mathbb{N}\setminus\{0\}$,   $v_{ij}=v_{kl}$ implies $i=j$,  $k=l$.}
	\end{alignat*}
	
\noindent That is, the remaining images of permutations containing $(xy), \ (xv)(y)$ or $(yv)(x)$ must be 2-cycles or fixed points.
	\end{proposition}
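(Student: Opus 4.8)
The plan is to show, in each of the three cases, that the automorphism in question squares to the identity; this immediately forces every one of its cycles to have length at most $2$, which is exactly the claimed conclusion. The unifying idea is that the determining-set hypothesis converts the statement "fixes $x$ and $y$" into "equals $e$" (Definition \ref{D5}), so it suffices to evaluate the square of the automorphism on the two distinguished vertices $x$ and $y$.

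First, for (i), I would write $\varphi = (xy)\alpha$ with $\alpha$ disjoint from $\{x,y\}$, so that $\varphi(x) = y$ and $\varphi(y) = x$. Computing the square gives $\varphi^2(x) = \varphi(y) = x$ and $\varphi^2(y) = \varphi(x) = y$, so $\varphi^2$ fixes both $x$ and $y$. Since $\varphi^2 \in Aut(G)$ and $\{x,y\}$ is a determining set, Definition \ref{D5} yields $\varphi^2 = e$. Hence $\varphi$ is an involution, so its disjoint-cycle decomposition consists solely of transpositions and fixed points; removing the transposition $(xy)$ leaves $\alpha$ as a product of $2$-cycles and $1$-cycles, which is precisely the asserted form.

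For (ii) and (iii) I would run the identical argument, only adjusting which vertex is tracked. In (ii), $\varphi = (xv)(y)\alpha$ gives $\varphi^2(x) = \varphi(v) = x$ and $\varphi^2(y) = \varphi(y) = y$; in (iii), $\varphi = (yv)(x)\alpha$ gives $\varphi^2(y) = \varphi(v) = y$ and $\varphi^2(x) = \varphi(x) = x$. In both cases $\varphi^2$ fixes the determining set pointwise, so $\varphi^2 = e$ by Definition \ref{D5}, and $\varphi$ is again an involution; stripping off the displayed $2$-cycle (together with the fixed point $y$ in (ii), or $x$ in (iii)) leaves $\alpha$ a product of transpositions and fixed points.

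There is essentially no serious obstacle here: the whole content is the single observation that $\varphi^2$ fixes $x$ and $y$ in every case, after which the group property (closure of $Aut(G)$ under composition) and the determining-set definition do all the work. The only points deserving a word of care are that $v \notin \{x,y\}$ in (ii) and (iii), so that the displayed cycles are genuinely disjoint and the evaluations above are legitimate, and that the side condition in the statement is merely recording that the transpositions comprising $\alpha$ have pairwise distinct entries, which is automatic once we know $\varphi$ is an involution written in disjoint-cycle form.
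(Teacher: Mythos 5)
Your proposal is correct and uses essentially the same idea as the paper: both square the automorphism, observe that the square fixes $x$ and $y$, and invoke the determining-set property (the paper via Proposition \ref{P4}, you via Definition \ref{D5} directly). The only difference is presentational — you argue directly that $\varphi^2 = e$ so $\varphi$ is an involution, whereas the paper assumes a cycle of length at least $3$ and derives a contradiction — but this is the same argument in contrapositive form.
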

	
	\begin{proof}
		(i)	Suppose to get a contradiction that $(xy) \alpha \in Aut(G)$, where $\alpha$ is a product of disjoint cycles and $\alpha = (v_1v_2 \dots v_k) \beta$, where $k \geq 3$ and $\beta$ is the remaining disjoint cycles of $\alpha$. Then $((xy)\alpha)^2 = (xy)^2 (\alpha)^2 = (x)(y) ((v_1v_2 \dots v_k))^2 \beta^2 \not= e$, since $(v_1v_2 \dots v_k)^2 \not =e$ as $k \geq 3$. Then $((xy)\alpha)^2 $ is a non-identity permutation  fixing $x$ and $y$, which contradicts to Proposition \ref{P4}.\\
		
		\noindent (ii)-(iii) Similar to part (i).
	\end{proof}
	
	\begin{proposition}\label{P6}
		Let $G$ be a graph. If $\{x, y\} \subseteq V(G)$ is a determining set $G$ then,\\
		
		     $(xy)\alpha_1, \ (xy)\alpha_2 \in Aut(G)$ for some $\alpha_1,\alpha_2$ $\implies \alpha_1 = \alpha_2$.\\

		\noindent That is, we cannot have two different automorhpisms extending $(xy)$.
		
	\end{proposition}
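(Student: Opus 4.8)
The plan is to reduce the claim to Proposition \ref{P4} by forming a suitable product of the two putative automorphisms and cancelling the common transposition $(xy)$. First I would record the structural input we already have: by Proposition \ref{P5}(i), each $\alpha_i$ is a product of disjoint $2$-cycles and fixed points, and since $(xy)\alpha_i$ is written in disjoint cycle form, none of the cycles of $\alpha_i$ involves $x$ or $y$. In particular $\alpha_1$ and $\alpha_2$ are both supported on $V(G)\setminus\{x,y\}$, so each commutes with the transposition $(xy)$, and moreover each $\alpha_i$ fixes both $x$ and $y$.

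Next I would set $\varphi_1 = (xy)\alpha_1$ and $\varphi_2 = (xy)\alpha_2$, both of which lie in $Aut(G)$ by hypothesis, and consider the composition $\psi := \varphi_1^{-1}\varphi_2 \in Aut(G)$. Writing $\varphi_1^{-1} = \alpha_1^{-1}(xy)$ and using that $(xy)$ commutes with $\alpha_2$ together with $(xy)^2 = e$, the transpositions cancel and I obtain
\[
\psi = \alpha_1^{-1}(xy)(xy)\alpha_2 = \alpha_1^{-1}\alpha_2 .
\]
Since $\alpha_1$ and $\alpha_2$ each fix $x$ and $y$, so does $\psi$; that is, $\psi = (x)(y)\gamma$ for some product of disjoint cycles $\gamma$.

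Finally, I would invoke Proposition \ref{P4}: as $\{x,y\}$ is a determining set and $\psi$ is an automorphism fixing both $x$ and $y$, Proposition \ref{P4} forces $\psi = e$. Hence $\alpha_1^{-1}\alpha_2 = e$, i.e.\ $\alpha_1 = \alpha_2$, which is exactly the assertion. The argument is essentially mechanical once the cancellation is set up; the only point requiring care—and the place where the determining-set hypothesis is genuinely used twice—is the verification that each $\alpha_i$ avoids $x$ and $y$ (so that the transpositions commute past $\alpha_i$ and cancel), which is precisely what Proposition \ref{P5}(i) guarantees. I would therefore make that disjointness explicit before performing the cancellation, so that the reduction to Proposition \ref{P4} is clean.
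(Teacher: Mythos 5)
Your proof is correct and takes essentially the same route as the paper: compose the two automorphisms so that the transposition $(xy)$ cancels, observe that the resulting automorphism fixes both $x$ and $y$, and invoke Proposition \ref{P4} to force it to be the identity. The only cosmetic difference is that you work with $\varphi_1^{-1}\varphi_2$ and argue directly, whereas the paper multiplies $(xy)\alpha_1\cdot(xy)\alpha_2$ and argues by contradiction via the involution structure from Proposition \ref{P5}(i); your inverse-based cancellation neatly sidesteps that case analysis.
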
	
	
	\begin{proof}
		(i) By Proposition \ref{P5} (i), if $(xy)\alpha_1, (xy)(\alpha_2) \in Aut(G)$, then $\alpha_1, \alpha_2$ are  products of disjoint 2-cycles and single cycles. Suppose to get a contradiction that $\alpha_1 \not= \alpha_2$.
		Without loss of generality, we may assume that  $\alpha_1 = (d_1d_2)\beta_1$ and $\alpha_2 = (d_1)(d_2) \beta_2$, for some $d_1\neq d_2\in V(G)$.
		Then $(xy)\alpha_1 \cdot (xy) \alpha_2 = (xy)(xy) (d_1d_2)(d_1)(d_2) \beta_1\beta_2 = (x)(y) (d_1d_2)\beta_1\beta_2 \not= e$. This contradicts $\{x, y\}$ being a determining set by Proposition \ref{P4}.
	\end{proof}

	\begin{proposition}\label{P7}
		Let $G$ be a graph and suppose that $\{x, y\} \subseteq V(G)$ is a determining set of $G$ and  $(xy)(d_1d_2)\alpha \in Aut(G)$, for some $\alpha$  (with $d_1 \not= d_2$).\\
		
		\noindent If 	$(xd_1)(y)\beta_1 \in Aut(G)$ for some $\beta_1$ $ \implies (xyd_1)\beta_2, (xd_1y)\beta_3 \not\in Aut(G)$ for any $\beta_2,\beta_3.$

	\end{proposition}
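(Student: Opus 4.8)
The plan is to assume, for contradiction, that one of the two forbidden three-cycles is an automorphism, and then to manufacture from it a \emph{second} automorphism extending the transposition $(xy)$ that disagrees with the given $\sigma := (xy)(d_1d_2)\alpha$ on the node $d_1$. Proposition \ref{P6} forbids two distinct extensions of $(xy)$, so this disagreement is the contradiction. Throughout I write $\sigma := (xy)(d_1d_2)\alpha$ and $\tau := (xd_1)(y)\beta_1$ for the two automorphisms supplied by the hypothesis, and I recall the paper's right-to-left composition convention.

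First I would treat the case $\rho := (xyd_1)\beta_2 \in Aut(G)$. The key computation is the product $\tau\rho$ (apply $\rho$ first, then $\tau$) restricted to the three relevant nodes: $x \xmapsto{\rho} y \xmapsto{\tau} y$, so $x \mapsto y$; next $y \xmapsto{\rho} d_1 \xmapsto{\tau} x$, so $y \mapsto x$; and finally $d_1 \xmapsto{\rho} x \xmapsto{\tau} d_1$, so $d_1 \mapsto d_1$. Hence $\tau\rho$ is an automorphism of the form $(xy)\gamma$ in which $d_1$ is a fixed point of $\gamma$.

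Then I would invoke Proposition \ref{P6}: both $\sigma = (xy)(d_1d_2)\alpha$ and $\tau\rho = (xy)\gamma$ extend the transposition $(xy)$, so they must be equal. But $\sigma$ sends $d_1 \mapsto d_2 \neq d_1$ while $\tau\rho$ fixes $d_1$, which is impossible. Therefore no automorphism of the form $(xyd_1)\beta_2$ can exist. For the remaining form, the cleanest route is to observe that the inverse of $(xd_1y)\beta_3$ has the shape $(xyd_1)\beta_3^{-1}$, which the previous step has just ruled out; since $Aut(G)$ is closed under inverses, $(xd_1y)\beta_3 \notin Aut(G)$ as well. (Alternatively one computes $\rho'\tau$ for $\rho' = (xd_1y)\beta_3$ and checks, exactly as above, that it collapses to an $(xy)$-extension fixing $d_1$, giving the same contradiction directly.)

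The one point that deserves care—and really the whole content of the argument—is the verification that the product $\tau\rho$ genuinely fixes $d_1$ while still swapping $x$ and $y$; this is precisely what allows Proposition \ref{P6} to bite against $\sigma$, which visibly moves $d_1$. I expect no real obstacle beyond this bookkeeping: the unknown tails $\alpha$, $\beta_1$, $\beta_2$, $\beta_3$ never need to be examined, since the entire contradiction is already detected on the three nodes $x$, $y$, $d_1$.
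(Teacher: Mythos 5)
Your proof is correct and is essentially the paper's own argument: you form the same product $(xd_1)(y)\beta_1 \cdot (xyd_1)\beta_2$, observe it is an extension of $(xy)$ fixing $d_1$, and derive the contradiction from Proposition \ref{P6} since $(xy)(d_1d_2)\alpha$ moves $d_1$. The only cosmetic difference is that you dispose of the $(xd_1y)\beta_3$ case by taking inverses where the paper squares, which for a 3-cycle is the same reduction.
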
		
	
	\begin{proof}
		Suppose to get a contradiction that $(xd_1)(y)\beta_1\in Aut(G)$ and \begin{align}
		(xyd_1)\beta_2\in Aut(G) \text{ or } \  (xd_1y)\beta_3 \in Aut(G),
		\end{align} for some $\beta_2,\beta_3$. First, we show that $(xyd_1)\beta_2\not\in Aut(G)$. Otherwise $(xd_1)(y)\beta_1 \cdot (xyd_1)\beta_2 = (xy) (d_1) \beta_1 \beta_2 \in Aut(G).$ Recall that, $(xy)(d_1d_2)\alpha \in Aut(G)$.  Proposition \ref{P6} implies that $(d_1) \beta_1 \beta_2 = (d_1d_2)\alpha$. However, since $\alpha, \ \beta_1, \ \beta_2$ are disjoint products of permutations not involving $d_1$,  this equality cannot hold as the image of $d_1$ is different on both sides. Thus $(xyd_1)\beta_2\not\in Aut(G)$.
		Next, consider if $(xd_1y)\beta_3 \in Aut (G)$. However, this also leads us to a contradiction, since  $((xd_1y)\beta_3)^2 = (xyd_1)(\beta_3)^2$ and taking $\beta_2 = (\beta_3)^2$, implies that $(xyd_1)\beta_2 \in Aut(G)$, for some $\beta_2$. Hence, $(xd_1y)\beta_3 \not\in Aut(G)$.
	\end{proof}

    \begin{proposition}\label{P8}
		Let $G$ be a graph and suppose that $\{x, y\} \subseteq V(G)$ is a determining set of $G$,  $(xy)(d_1d_2)\alpha \in Aut(G)$, for some $\alpha$  (with $d_1 \not= d_2$).\\
		
		\noindent If $(xd_1)(y)\beta_1 \in Aut(G)$, for some $\beta_1$  $\implies (yd_1)(x) \beta_2 \not\in Aut(G),$ for any $\beta_2$.
	\end{proposition}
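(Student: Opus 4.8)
The plan is to argue by contradiction and reduce everything to Proposition \ref{P7}, which carries exactly the same standing hypotheses: $\{x,y\}$ is a determining set, $(xy)(d_1d_2)\alpha \in Aut(G)$, and $(xd_1)(y)\beta_1 \in Aut(G)$. So I would suppose, towards a contradiction, that in addition $(yd_1)(x)\beta_2 \in Aut(G)$ for some $\beta_2$, and then manufacture from these data an automorphism of a shape that Proposition \ref{P7} has already forbidden. Since $Aut(G)$ is a group, the composite of two automorphisms is again one, so the whole game is to compute the right product and read off its cycle type on the three distinguished vertices.

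Concretely, write $\sigma = (xd_1)(y)\beta_1$ and $\tau = (yd_1)(x)\beta_2$, and compose right-to-left as functions (following the paper's convention). Tracing the three relevant vertices gives $\sigma\tau(x) = \sigma(x) = d_1$, $\sigma\tau(y) = \sigma(d_1) = x$, and $\sigma\tau(d_1) = \sigma(y) = y$, so on $\{x,y,d_1\}$ the product acts as the $3$-cycle $(x\,d_1\,y)$. By Proposition \ref{P5}, both $\beta_1$ and $\beta_2$ are products of $2$-cycles and fixed points avoiding $\{x,y,d_1\}$, hence the remaining cycles of $\sigma\tau$ assemble into some permutation $\gamma$ disjoint from $\{x,y,d_1\}$. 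Therefore $\sigma\tau = (x\,d_1\,y)\gamma \in Aut(G)$, which is precisely an automorphism of the form $(xd_1y)\beta_3$ that Proposition \ref{P7} rules out. This contradiction shows no such $\beta_2$ can exist.

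I expect the computation itself to be the only place needing care, and even that is routine: the single point to verify is that $\beta_1,\beta_2$ genuinely avoid $x$, $y$, and $d_1$ (guaranteed by Proposition \ref{P5}), so that the $\{x,y,d_1\}$-part of the product is \emph{exactly} the claimed $3$-cycle and not accidentally merged with longer cycles. There is no substantive obstacle beyond this bookkeeping, and as a consistency check one may note that composing in the opposite order yields $\tau\sigma = (x\,y\,d_1)\gamma'$, the other $3$-cycle form also excluded by Proposition \ref{P7}, so either order delivers the same contradiction.
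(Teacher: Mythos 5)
Your proposal is correct and is essentially the paper's own proof: the paper likewise composes $(xd_1)(y)\beta_1 \cdot (yd_1)(x)\beta_2 = (xd_1y)\beta_1\beta_2 \in Aut(G)$ and invokes Proposition \ref{P7} for the contradiction. Your extra bookkeeping (verifying that $\beta_1,\beta_2$ avoid $\{x,y,d_1\}$ so the $3$-cycle is exact) is sound, though it follows already from the disjoint-cycle notation rather than needing Proposition \ref{P5}.
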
			
	
	\begin{proof}
		Suppose to get a contradiction that $(xy)(d_1d_2)\alpha,  (xd_1)(y)\beta_1, (yd_1)(x) \beta_2 \in Aut(G)$, for some $\alpha,\beta_1$ and $\beta_2$. Then, $ (xd_1)(y)\beta_1 \cdot (yd_1)(x) \beta_2 = (xd_1y) \beta_1\beta_2 \in Aut(G)$  contradicting to Proposition \ref{P7}.
	\end{proof}

	\begin{proposition}\label{P9}
			Let $G$ be a graph and suppose that $\{x, y\} \subseteq V(G)$ is a determining set of $G$,  $(xy)(d_1d_2)(d_3d_4)\alpha \in Aut(G)$, for some $\alpha$  and $d_i\neq d_j$ for $i\not= j\in \{1,2,3,4\}$.
		
		\begin{alignat*}{4}
			&	\text{If }   (xd_i)(y)\beta_1 \in Aut(G), \text{ for some } \beta_1&& \implies  &&&(i) \ (xd_id_j)(y)\beta_2, (xd_jd_i)(y)\beta_3 \not\in Aut(G), \\
			 & && &&&  (ii)\ (yd_id_j)(x)\beta_4, (yd_jd_i)(x)\beta_5 \not\in Aut(G),
		\end{alignat*}
		\noindent for any $\beta_l$, where $2\leq l\leq 5$.

	\end{proposition}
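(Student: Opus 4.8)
My plan rests on a convenient reformulation of Proposition~\ref{P4}: since $\{x,y\}$ is a determining set, no non-identity automorphism fixes both $x$ and $y$, so every $\varphi\in Aut(G)$ is \emph{completely determined by the ordered pair} $(\varphi(x),\varphi(y))$; equivalently, any product of automorphisms that fixes both $x$ and $y$ must be the identity. I would prove both parts by contradiction. In each part the two listed permutations are inverses of each other up to their unknown tails (for instance $\big((xd_jd_i)(y)\beta_3\big)^{-1}=(xd_id_j)(y)\beta_3^{-1}$ and $\big((yd_jd_i)(x)\beta_5\big)^{-1}=(yd_id_j)(x)\beta_5^{-1}$), so it suffices to rule out one member of each pair and obtain the other by passing to inverses.

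For part (i), write $\sigma=(xd_i)(y)\beta_1$ for the hypothesised automorphism and suppose toward a contradiction that $\tau=(xd_id_j)(y)\beta_2\in Aut(G)$. I would compute the product $\sigma\tau$ on the determining set: $(\sigma\tau)(x)=\sigma(d_i)=x$ and $(\sigma\tau)(y)=\sigma(y)=y$, so $\sigma\tau$ fixes both $x$ and $y$ and hence is the identity by Proposition~\ref{P4}. Then $\tau=\sigma^{-1}$, which is impossible, since the cycle of $\sigma^{-1}$ containing $x$ is the transposition $(xd_i)$ while that of $\tau$ is the $3$-cycle $(xd_id_j)$. It is worth noting that this step uses only $\sigma$ and the determining-set property, and not the two-transposition shape of the hypothesis.

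Part (ii) is the crux, and is exactly where the hypothesis $(xy)(d_1d_2)(d_3d_4)\alpha$ (rather than a single transposition) is genuinely needed. Set $\mu=(xy)(d_1d_2)(d_3d_4)\alpha$ and suppose $\rho=(yd_id_j)(x)\beta_4\in Aut(G)$. I would first check that $\sigma\rho$ and $\rho\mu$ act identically on the determining set: both send $x\mapsto d_i$ and $y\mapsto x$ (for $\sigma\rho$ via $x\mapsto x\mapsto d_i$ and $y\mapsto d_i\mapsto x$; for $\rho\mu$ via $x\mapsto y\mapsto d_i$ and $y\mapsto x\mapsto x$). By the reformulation of Proposition~\ref{P4} they must coincide, which yields the conjugacy relation $\rho^{-1}\sigma\rho=\mu$. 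The contradiction then comes from evaluating both sides at $d_j$: applying $\rho$, then $\sigma$, then $\rho^{-1}$ sends $d_j\mapsto y\mapsto y\mapsto d_j$, so the left-hand side fixes $d_j$, whereas $\mu(d_j)$ is the partner of $d_j$ and hence $\neq d_j$, precisely because $d_j\in\{d_1,d_2,d_3,d_4\}$ lies in one of the two displayed transpositions of $\mu$. The main obstacle is recognising that the right invariant to track is the conjugate $\rho^{-1}\sigma\rho$ and that its only possible clash with $\mu$ occurs at the symbol $d_j$: the two-transposition hypothesis is exactly what forces $\mu$ to move $d_j$, so a single pair $(d_1d_2)$ would not suffice to close the argument. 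The companion forms $(xd_jd_i)(y)\beta_3$ and $(yd_jd_i)(x)\beta_5$ then follow by the inverse reduction described in the first paragraph.
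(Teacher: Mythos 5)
Your proof is correct, and while part (i) matches the paper, part (ii) takes a genuinely different route. In part (i) both arguments multiply the two hypothesized automorphisms and appeal to Proposition \ref{P4}: the paper computes $(xd_jd_i)(y)\beta_3\cdot(xd_i)(y)\beta_1=(x)(y)(d_id_j\dots)\beta_3\beta_1$, a non-identity automorphism fixing $x$ and $y$, while you compute $\sigma\tau$, conclude $\sigma\tau=e$, and get the contradiction because $\sigma^{-1}$ and $\tau$ disagree at $d_i$; the remaining orientation is recovered by squaring in the paper versus by inverting in your write-up. In part (ii) the paper squares $(yd_id_j)(x)\beta_4$ to produce $(yd_jd_i)(x)\beta_5$, forms the triple product $(yd_jd_i)(x)\beta_5\cdot(xd_i)(y)\beta_1\cdot(yd_id_j)(x)\beta_4=(xy)(d_j)\beta_4\beta_1\beta_5$, and invokes Proposition \ref{P6} (uniqueness of the automorphism extending $(xy)$); you instead derive the conjugacy relation $\rho^{-1}\sigma\rho=\mu$ from the principle that an automorphism of $G$ is determined by its images on the determining set $\{x,y\}$ --- an immediate consequence of Proposition \ref{P4} that subsumes Proposition \ref{P6} --- and then exhibit the clash at $d_j$. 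Both proofs end on the same point: the constructed automorphism fixes $d_j$, whereas $(xy)(d_1d_2)(d_3d_4)\alpha$ moves $d_j$ to its partner. What your packaging buys is economy: only Proposition \ref{P4} is needed, no squaring computations, and the conjugation identity makes the mechanism transparent. What the paper's buys is uniformity: it stays inside the ``unique extension of $(xy)$'' toolkit (Proposition \ref{P6}) that it reuses repeatedly in the proof of the main theorem. One small correction to a side remark of yours: a single transposition $(d_1d_2)$ in the hypothesis would still suffice for the conclusion restricted to $i,j\in\{1,2\}$; the role of the second transposition is only to guarantee that every $d_j$ with $j\in\{1,2,3,4\}$ is moved by $\mu$, so that the statement can range over all four indices.
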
	
	
	\begin{proof}
\noindent (i)		Suppose to get a contradiction that $(xd_i)(y)\beta_1\in Aut(G)$ and 
\begin{align}
(xd_id_j)(y)\beta_2\in Aut(G) \text{ or }  (xd_jd_i)(y)\beta_ 3\in Aut(G),
\end{align}
for some $\beta_2, \beta_3$ and $d_j\neq d_i\in V(G)$.  First, we show that $(xd_jd_i)(y)\beta_3\not\in Aut(G)$. Otherwise,
\begin{align}
(xd_jd_i)(y)\beta_3 \cdot (xd_i)(y)\beta_1 = (x) (y)(d_id_j\dots)\beta_3\beta_1\in Aut(G).
\end{align}
This contradicts that $\{x, y\}$ is a determining set by Proposition \ref{P4}. Next, suppose that $(xd_id_j)(y)\beta_2\in Aut(G)$. Then, $((xd_id_j)(y)\beta_2)^2 = (xd_jd_i)(y)\beta_2^2\in Aut(G)$. However, this contradicts to $(xd_jd_i)(y)\beta_3 \not\in Aut(G)$, for any $\beta_3$.  \\
		
		\noindent (ii)
			Suppose to get a contradiction that $(xd_i)(y)\beta_1\in Aut(G)$ and 
			\begin{align}
				(yd_id_j)(x)\beta_4\in Aut(G) \text{ or }  (yd_jd_i)(x)\beta_ 5\in Aut(G),
			\end{align}
		for some $\beta_4, \beta_5$ and $d_j\neq d_i\in V(G)$. Now if $(yd_id_j)(x)\beta_4 \in Aut(G)$, then also $(yd_jd_i)(x)\beta_5 \in Aut(G)$, since
			\begin{align}
		((yd_id_j)(x)\beta_4)^2 = (yd_jd_i)(x)\beta_4^2\in Aut(G),
		\end{align}
		  where $\beta_5=\beta_4^2$. 
		    Consider, 
		    \begin{align}
		    (yd_jd_i)(x)\beta_5  \cdot (xd_i)(y)\beta_1\cdot (yd_id_j)(x)\beta_4  = (xy)(d_j) \beta_4 \beta_1 \beta_5 \in Aut(G).
		    \end{align}
		  However, by assumption $(xy)(d_id_j)\alpha\in Aut(G)$  (where $d_i\neq d_j \ \text{for} \ i, j \in \{1,2,3,4\}$) and $(xy)(d_id_j)\alpha\neq (xy)(d_j)\beta_4\beta_1\beta_5$, as the image of $d_j$ is different for both of these  extensions of $(xy)$. This contradicts Proposition \ref{P6}. Next, if $(yd_jd_i)\beta_5 \in Aut(G)$, then $((yd_jd_i)(x)\beta_5)^2 = (yd_id_j)(x)\beta_5^2\in Aut(G)$. Letting, $\beta_4=\beta_5^2$, this gives a contradiction.
	\end{proof}	
	
	\begin{proposition}\label{P10}
		Let $G$ be a graph and $(xy)(d_1d_2)\alpha \in Aut(G)$, for some $\alpha$. Then,\\
\vspace{-8mm}
        
		 $$(xd_1)(y) \beta_1 \in Aut(G), \  \text{for some} \ \beta_1 \iff (yd_2)(x) \beta_2, \ \text{for some} \ \beta_2.$$
	\end{proposition}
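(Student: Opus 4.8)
The plan is to exploit that $Aut(G)$ is closed under conjugation, using the given permutation $\sigma := (xy)(d_1d_2)\alpha \in Aut(G)$ as the conjugating element. Since $(xy)$, $(d_1d_2)$ and the cycles of $\alpha$ are disjoint, the vertices $x,y,d_1,d_2$ are distinct and $\sigma(x)=y$, $\sigma(y)=x$, $\sigma(d_1)=d_2$, $\sigma(d_2)=d_1$. The single fact driving the whole argument is that conjugation acts on a disjoint-cycle decomposition by relabelling each entry through $\sigma$: for any $\tau\in Aut(G)$ and any vertex $p$ one has $(\sigma\tau\sigma^{-1})(\sigma(p))=\sigma(\tau(p))$ (using the right-to-left composition convention fixed in the Notation), so a cycle $(p_1\,p_2\dots p_k)$ of $\tau$ becomes the cycle $(\sigma(p_1)\,\sigma(p_2)\dots\sigma(p_k))$ of $\sigma\tau\sigma^{-1}$.

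For the forward implication I would start from $(xd_1)(y)\beta_1\in Aut(G)$ and conjugate by $\sigma$, which keeps the result inside $Aut(G)$. By the relabelling rule the $2$-cycle $(xd_1)$ is sent to $(\sigma(x)\,\sigma(d_1))=(yd_2)$, the fixed point $y$ is sent to the fixed point $\sigma(y)=x$, and $\beta_1$ is sent to $\beta_2:=\sigma\beta_1\sigma^{-1}$, which is disjoint from $(yd_2)(x)$ because conjugation preserves disjointness. Hence $\sigma\,(xd_1)(y)\beta_1\,\sigma^{-1}=(yd_2)(x)\beta_2\in Aut(G)$, exactly as required.

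For the converse I would run the identical argument in reverse, conjugating a given $(yd_2)(x)\beta_2\in Aut(G)$ by $\sigma^{-1}$. The only point needing verification is that $\sigma^{-1}$ acts on the four relevant vertices exactly as $\sigma$ does; this is immediate because $(xy)$ and $(d_1d_2)$ are their own inverses and are disjoint from $\alpha$, so $\sigma^{-1}=(xy)(d_1d_2)\alpha^{-1}$ still sends $x\leftrightarrow y$ and $d_1\leftrightarrow d_2$. (If one additionally assumes, as throughout this section, that $\{x,y\}$ is a determining set, then Proposition \ref{P5} forces $\alpha$ to be a product of $2$-cycles and fixed points, so $\sigma$ is an involution and $\sigma^{-1}=\sigma$ literally.) Thus conjugating by $\sigma^{-1}$ turns $(yd_2)(x)\beta_2$ into $(xd_1)(y)\beta_1$ with $\beta_1:=\sigma^{-1}\beta_2\sigma$, closing the equivalence.

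There is no substantial obstacle here beyond careful bookkeeping: the main thing to get right is the direction of the relabelling under the paper's right-to-left multiplication convention, together with the check that $\sigma$ and $\sigma^{-1}$ agree on $\{x,y,d_1,d_2\}$, which is precisely what makes the two conjugations genuinely inverse to one another and hence yields a biconditional rather than a single implication.
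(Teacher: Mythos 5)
Your proof is correct and takes essentially the same approach as the paper: the paper's one-line argument forms the product $(xy)(d_1d_2)\alpha \cdot (xd_1)(y)\beta_1 \cdot (xy)(d_1d_2)\alpha$ and reads off that it has the form $(yd_2)(x)\beta_2$, which is your conjugation argument except with $\sigma$ in place of $\sigma^{-1}$ as the right-hand factor --- a harmless difference, since $\sigma$ and $\sigma^{-1}$ agree on $\{x,y,d_1,d_2\}$ and only the images of those four vertices determine the claimed form. The converse is handled by the same symmetric computation in both proofs.
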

	
	\begin{proof}
		
		($\Rightarrow$): 	Suppose that $(xy)(d_1d_2)\alpha , (xd_1)(y)\beta_2\in Aut(G)$ for some $\alpha, \beta_1$. Then,
		\begin{align}
		(xy)(d_1d_2)\alpha \cdot (xd_1)(y) \beta_1  \cdot (xy)(d_1d_2)\alpha =(yd_2)(x) \alpha\beta_1\alpha \in Aut(G).
		\end{align}
		 	 Thus, let $\beta_2:= \alpha\beta_1\alpha$. The converse direction ($\Leftarrow$) is similar.
	\end{proof}	
	
	\begin{proposition}\label{P11}
		Let $G$ be a graph and let $\{x, y\} \subseteq V(G)$ be a determining set of $G$. Then,
        $$(xd_1)(y) \beta_1, (xd_2)(y)\beta_2 \in Aut(G) \implies (xd_1)(y)(d_2)\beta_3, (xd_2)(y)(d_1)\beta_4 \not\in Aut(G),$$

         \noindent for any $\beta_1,\  \beta_2 \beta_3,\beta_4$  and $d_1 \not= d_2$. 

	\end{proposition}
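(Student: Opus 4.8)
The plan is to rule out each of the two forbidden forms separately, exploiting that any automorphism fixing both $x$ and $y$ must be the identity (Proposition \ref{P4}). The key preliminary observation is that each hypothesized automorphism is in fact an involution: by Proposition \ref{P5}(ii), the unknown part $\beta_i$ of a permutation of the shape $(xv)(y)\beta_i$ consists only of $2$-cycles and fixed points. Writing $\sigma_1 = (xd_1)(y)\beta_1$ and $\sigma_2 = (xd_2)(y)\beta_2$, this gives $\sigma_1^2 = \sigma_2^2 = e$, which is exactly what makes the conjugation argument below close up.

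For the first form I would suppose toward a contradiction that $\tau = (xd_1)(y)(d_2)\beta_3 \in Aut(G)$; the only features of $\tau$ I need are that it fixes $y$ and fixes $d_2$. I then conjugate $\tau$ by the involution $\sigma_2$, which interchanges $x$ and $d_2$ and fixes $y$. Tracking images from right to left, $\sigma_2\tau\sigma_2$ sends $x \mapsto d_2 \mapsto d_2 \mapsto x$ and $y \mapsto y \mapsto y \mapsto y$, so it fixes both $x$ and $y$. By Proposition \ref{P4} it must be the identity, and since $\sigma_2$ is an involution this forces $\tau = \sigma_2^{-2} = e$, contradicting $\tau \neq e$ (as $x \neq d_1$). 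Hence no automorphism of the form $(xd_1)(y)(d_2)\beta_3$ exists.

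The second form is handled symmetrically: if $\tau' = (xd_2)(y)(d_1)\beta_4 \in Aut(G)$, then $\tau'$ fixes $y$ and $d_1$, and conjugating by the involution $\sigma_1$ (which swaps $x$ and $d_1$) yields $\sigma_1\tau'\sigma_1$ fixing $x$ and $y$; Proposition \ref{P4} again collapses it to the identity, forcing $\tau' = e$, a contradiction. Thus both forbidden forms are excluded.

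The only real subtlety, and the step I would be most careful about, is the reduction to involutions via Proposition \ref{P5}(ii): it is precisely what lets $\sigma_2\tau\sigma_2 = e$ deliver $\tau = e$ rather than merely $\tau = \sigma_2^{-2}$. I would also note that each half of the conclusion uses only one of the two hypothesized automorphisms ($\sigma_2$ for the first form, $\sigma_1$ for the second), so the two hypotheses act independently, one per conclusion; there is no need to combine them.
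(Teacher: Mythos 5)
Your proof is correct and is essentially the paper's own argument: both hinge on conjugating the hypothesized automorphism $(xd_1)(y)(d_2)\beta_3$ by $\sigma_2=(xd_2)(y)\beta_2$ (and symmetrically $(xd_2)(y)(d_1)\beta_4$ by $\sigma_1=(xd_1)(y)\beta_1$) to produce an automorphism fixing both $x$ and $y$, then invoking Proposition \ref{P4}. The only difference is in how the contradiction is extracted, and it is cosmetic: the paper tracks one more image to see that the conjugate sends $d_2$ to $\sigma_2(d_1)\neq d_2$ and is therefore a non-identity automorphism forbidden by Proposition \ref{P4}, whereas you let Proposition \ref{P4} force the conjugate to equal $e$ and then use the involution property $\sigma_2^2=e$ (justified by Proposition \ref{P5}(ii)) to conclude $\tau=e$, contradicting $\tau(x)=d_1\neq x$.
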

	
	\vspace{-3mm}
	\begin{proof}
		Assume, by way of contradiction that $(xd_1)(y) \beta_1, (xd_2)(y)\beta_2$ and	$(xd_1)(y)(d_2)\beta_3 \in Aut(G)$, for some $\beta_3$. Then we can write,
		\begin{align}
		(xd_2)(y)\beta_2 = (xd_2)(y)(d_1d_3\dots)\beta_5\in Aut(G),
		\end{align}
	 where $d_3 \not = d_2$ as $d_2$ is already the image of $x$ (however $d_3$ could be $d_1$). Consider,
		\begin{align}
			\begin{split}
				(xd_2)(y)(d_1d_3\dots)\beta_5 \cdot (xd_1)(y)(d_2)\beta_3 \cdot (xd_2)(y)(d_1d_3\dots)\beta_5 \\= (x)(y)(d_2d_3\dots) \in Aut(G).
			\end{split}
		\end{align}
		 Observe that, this automorphism is not equal to the identity automorphism as $d_2 \not = d_3$. This contradicts $\{x, y\}$ being a determining set by Proposition \ref{P4}. Similarly, if $(xd_2)(y)(d_1)\beta_4 \in Aut(G)$.
	\end{proof}

	\begin{lemma}\label{L4}
		
		Let $G$ be a graph with $D(G) = 2$. Assume that  $\{x, y\} \subseteq V(G)$ is a determining set of $G$  and  $(xy)(d_1d_2)(d_3d_4)\alpha \in Aut(G)$, for some $\alpha$, where $d_i\neq d_j$ for $i\neq j $ and $ i, j \in \{1,2,3,4\}$. 
		
		\begin{enumerate}
			\item[(i)] If $(xd_1)(y)\beta_1, \ (xd_3)(y)\beta_2  \in Aut(G)$, for some $\beta_1,\beta_2$, then the following holds:\\
			\begin{align*}
				(xd_1)(yd_3)\beta_3,  (xd_3)(yd_1)\beta_4& \not\in Aut(G), \text{ for any $\beta_3,\beta_4$} \implies \\
	 			\{x, y, d_1, d_3\} \text{ is a } &\text{distinguishing class of $G$.}
			\end{align*}
			\vspace{.1cm}
			 
			\item[(ii)]   If $(xd_1)(y)\beta_1, \ (yd_3)(x)\beta_2  \in Aut(G)$, for some $\beta_1,\beta_2$, then the following holds:
			
				\begin{align*}
			\begin{rcases*}
				(xd_1)(yd_3)\beta_3, &  $(xd_3)(yd_1)\beta_4$,  \\
				(xd_1)(y)(d_3)\beta_5, & $(yd_3)(x)(d_1)\beta_6$\\
			\end{rcases*}&\not \in Aut(G), \text{ for any $\beta_3,\beta_4,\beta_5$ and $\beta_6$ } \implies\\
		\{x,y,d_1,d_3\} &\text{ is a distinguishing class of $G$.}
			\end{align*}
		\end{enumerate}
	\end{lemma}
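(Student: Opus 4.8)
The plan is to apply Lemma \ref{L3}. Since $\{x,y\}$ is a determining set, so is the larger set $S:=\{x,y,d_1,d_3\}$ (any automorphism fixing $S$ pointwise fixes $x,y$, hence is the identity), so it remains only to show that every $\varphi\in Aut(G)$ fixing $S$ setwise fixes $S$ pointwise. Equivalently, I must rule out every nonidentity permutation $\pi$ of the four-element set $S$ that can arise as a restriction $\varphi|_S$. Note first that any such $\pi$ moving neither $x$ nor $y$ is impossible by Proposition \ref{P4} (it would give a nonidentity automorphism fixing both $x$ and $y$); this disposes of $(d_1d_3)$. The remaining nonidentity permutations of $S$ I organize by cycle type, reading off the shape of $\varphi=\pi\cdot(\text{cycles outside }S)$ and applying the structural propositions.

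For the transpositions, double transpositions and $3$-cycles I proceed as follows. The swap $(xy)$ and the double transposition $(xy)(d_1d_3)$ both extend $(xy)$, hence must equal the given automorphism $(xy)(d_1d_2)(d_3d_4)\alpha$ by Proposition \ref{P6}; this is impossible since that automorphism sends $d_1\mapsto d_2$. Every other transposition or $3$-cycle has one of the shapes $(xd_i)(y)\cdots$, $(yd_i)(x)\cdots$, $(xyd_i)\cdots$, $(xd_iy)\cdots$, $(xd_id_j)(y)\cdots$ or $(yd_id_j)(x)\cdots$, each forbidden by Propositions \ref{P7}, \ref{P8}, \ref{P9}, \ref{P11}. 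In case (i) the two hypotheses $(xd_1)(y)\beta_1,(xd_3)(y)\beta_2\in Aut(G)$ let me invoke Proposition \ref{P11} for the transpositions $(xd_1),(xd_3)$, Proposition \ref{P8} for $(yd_1),(yd_3)$, and Propositions \ref{P7} and \ref{P9} with both $i=1$ and $i=3$ for the $3$-cycles; the two remaining double transpositions $(xd_1)(yd_3),(xd_3)(yd_1)$ are exactly the forbidden forms in the hypothesis of (i). In case (ii) I have only one automorphism of each shape, $(xd_1)(y)\beta_1$ and $(yd_3)(x)\beta_2$, so Proposition \ref{P11} is unavailable; instead I exploit that all standing hypotheses are symmetric under interchanging $x$ and $y$ (since $(xy)=(yx)$), so Propositions \ref{P7}, \ref{P8}, \ref{P9} hold verbatim after this swap. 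Applying the swapped forms to $\tau_2=(yd_3)(x)\beta_2$ eliminates the shapes involving $d_3$ (e.g.\ $(xd_3)(y)\cdots$ and $(xyd_3)\cdots$), while the four permutations not reached this way — the transpositions $(xd_1),(yd_3)$ and the double transpositions $(xd_1)(yd_3),(xd_3)(yd_1)$ — are precisely the four forms the hypothesis of (ii) declares to be non-automorphisms.

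Finally, the six $4$-cycles are handled uniformly by squaring: if $\varphi|_S$ is a $4$-cycle then $\varphi^2\in Aut(G)$ restricts to $(\varphi|_S)^2$, which is always one of the three double transpositions $(xy)(d_1d_3)$, $(xd_1)(yd_3)$, $(xd_3)(yd_1)$, all already excluded in the previous step. Hence no $4$-cycle occurs either, so $\varphi|_S$ is trivial for every $\varphi$ fixing $S$ setwise, and Lemma \ref{L3} yields that $S=\{x,y,d_1,d_3\}$ is a distinguishing class.

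I expect the main obstacle to be bookkeeping rather than any single deep step: one must verify that the full list of nonidentity permutations of $S$ is genuinely covered and that each invoked proposition is applied in the correct configuration (in particular matching each $3$-cycle to the right instance of Proposition \ref{P9} or of the $x\leftrightarrow y$-symmetric Proposition \ref{P7}). The two conceptual shortcuts that prevent the analysis from exploding are the $x\leftrightarrow y$ symmetry of the hypotheses in part (ii) and the squaring trick that reduces all $4$-cycles to already-excluded double transpositions; recognising these is the crux, after which the case analysis is routine.
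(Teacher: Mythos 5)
Your proof is correct and follows essentially the same strategy as the paper's: reduce to Lemma \ref{L3}, enumerate the nonidentity permutations of $\{x,y,d_1,d_3\}$ by cycle type, kill transpositions and $3$-cycles via Propositions \ref{P4}, \ref{P6}, \ref{P7}, \ref{P8}, \ref{P9}, \ref{P11}, and dispose of all $4$-cycles by squaring them into already-excluded double transpositions. The only difference is cosmetic: the paper dispatches part (ii) with a one-line ``similar to (i)'' remark, whereas you make the $x\leftrightarrow y$ symmetry explicit to handle the shapes involving $d_3$ (e.g.\ $(xd_3)(y)\cdots$ via the swapped form of Proposition \ref{P8} applied to $(yd_3)(x)\beta_2$), which is a faithful and somewhat more complete rendering of what the paper leaves implicit.
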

	
	\begin{proof}(i) Assume that  $(xd_1)(y)\beta_1, \ (xd_3)(y)\beta_2  \in Aut(G)$, for some $\beta_1,\beta_2$ and $(xd_1)(yd_3)\beta_3, \\  (xd_3)(yd_1)\beta_4 \not\in Aut(G)$, \text{for any} $\beta_3,\beta_4$. Let  $S= \{x, y, d_1, d_3\}$. This is a determining set since $\{x, y\}$ is a determining set. We will show that this forms a distinguishing class using Lemma \ref{L3}, by showing that if $\varphi(S) = S$, then $\varphi(s) = s$, for every $s \in S$. We will investigate all possibilities of permutations which leaves $S$ invariant but does not fix its elements, and show that such permutations cannot be in $Aut(G)$. These include, permutations which, \textsc{Case 1:} flips two elements in $S$ and leaves the other elements of $S$ fixed; \textsc{Case 2:}  includes two transpositions of the elements of $S$; \textsc{Case 3:} includes a 3-cycle and a single cycle of the elements of $S$; \textsc{Case 4:} includes a 4-cycle of the elements of $S$. We consider these cases next:\\
		
		\noindent \underline{\textsc{Case 1:}} In disjoint-cycle form, these permutations consists of;
		\begin{align}
			\begin{split}
			&(xy) (d_1)(d_3)\gamma_1, \qquad (d_1d_3)(x)(y) \gamma_2, \qquad (xd_1) (y)(d_3) \gamma_3,\\
			&(xd_3) (y)(d_1) \gamma_4, \qquad (yd_1) (x)(d_3) \gamma_5, \qquad (yd_3)(x)(d_1) \gamma_6.
			\end{split}
		\end{align}
		  Since  $(xy)(d_1d_2)(d_3d_4)\alpha \in Aut(G)$  by Proposition \ref{P6} we must have $(xy)(d_1)(d_3)\gamma_1 \not \in Aut(G)$. Also, $(d_1d_3)(x)(y) \gamma_2 \not\in Aut(G)$ by Proposition \ref{P4}. Additionally, $(xd_1) (y)(d_3) \gamma_3$ and $\ (xd_3) (y)(d_1) \gamma_4 \not\in Aut(G)$ by Proposition \ref{P11}. Lastly, $(yd_1)(x)(d_3) \gamma_5,(yd_3) (x)(d_1) \gamma_6 \not\in Aut(G)$ by Proposition \ref{P8}.\\
		
		\noindent  \underline{\textsc{Case 2:}} In disjoint-cycle form, the possibilities are;
		\begin{align}
			\begin{split}
				(xy)(d_1d_3)\gamma_1,\ (xd_1)(yd_3)\gamma_2,\ (xd_3)(yd_1)\gamma_3.
			\end{split}
		\end{align}
		The first type of permutation is not in $Aut (G)$ by Proposition \ref{P6} and the last two types of permutations are not in $Aut(G)$ by the assumed condition.\\
		
		\noindent \underline{\textsc{Case 3:}} In disjoint-cycle form, the possibilities are;
		\begin{align}
			\begin{split}
			(xyd_1)(d_3)\gamma_1, \ (xyd_3)(d_1)\gamma_2,\\
			(xd_1d_3)(y)\gamma_3, \ (yd_1d_3)(x)\gamma_4,
			\end{split}
		\end{align}
		
		\noindent and their inverses. 
		 Since  $\varphi \in Aut(G)$ iff $\varphi^{-1} \in Aut(G)$, it is enough to show that the first 4 types of permutations are not in $Aut(G)$. Now, $(xyd_1)(d_3)\gamma_1$ and $(xyd_3)(d_1)\gamma_2 \not \in Aut(G)$ by Proposition \ref{P7}, whilst $(xd_1d_3)(y)\gamma_3$ and $(yd_1d_3)(x)\gamma_4 \not\in Aut(G)$ by Proposition \ref{P9}.\\
		
		\noindent  \underline{\textsc{Case 4:}}  In disjoint-cycle form, the possibilities are;
		\begin{align}
			\begin{split}
			(xyd_1d_3)\gamma_1, \ (xyd_3d_1)\gamma_2, \ (xd_1yd_3)\gamma_3,\\
			(xd_3yd_1)\gamma_4, (xd_1d_3y)\gamma_5, (xd_3d_1y)\gamma_6.
			\end{split}
		\end{align}
		 If the first two permutation types are in $Aut(G)$, then squaring them implies that a permutation of the form $(xd_1)(yd_3)\beta$ or $(xd_3)(yd_1)\beta$ is in  $Aut(G)$, which is in contradiction with the assumed conditions. Squaring the third and fourth permutation types, implies that a permutation of the form $(xy)(d_1d_3)\beta \in Aut(G)$ but this contradicts Proposition \ref{P6} since $(xy)(d_1d_2)(d_3d_4)\alpha \in Aut(G)$. Lastly, squaring the fifth and sixth permutation types implies that  a permutation of the form $(xd_3)(yd_1)\beta$ or $(xd_1)(yd_3)\beta$ is in  $Aut(G)$ which is in contradiction with the assumed conditions.\\
		
		\noindent  (ii) Similar to (i), except that since we cannot apply Proposition \ref{P11} here, we have to also forbid $(xd_1)(y)(d_3)\beta_5$ and $(yd_3)(x)(d_1)\beta_6$ from being in $Aut(G)$, in the antecedent of the condition.
		\end{proof}
	
	\begin{lemma}\label{L5}
		
		Let $G$ be a graph with $D(G) = 2$ and $Det(G) = 2$. If $\{x, y\}$ is a determining set of $G$, then $(xy) \not\in Aut(G)$.
	\end{lemma}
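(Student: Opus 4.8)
The plan is to argue by contradiction, so suppose $(xy) \in Aut(G)$, meaning the permutation swapping $x$ and $y$ while fixing every other vertex is an automorphism. The strategy is to manufacture a \emph{third} vertex $z$ so that $x$, $y$, $z$ become mutually interchangeable; a distinguishing coloring would then be forced to assign them three distinct colors, contradicting $D(G)=2$.

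First I would produce an auxiliary automorphism. Since $Det(G)=2$, no singleton is a determining set; in particular $\{x\}$ is not, so there exists $\sigma \in Aut(G) \setminus \{e\}$ with $\sigma(x)=x$. If $\sigma(y)=y$ as well, then $\sigma$ fixes the determining set $\{x,y\}$ pointwise and hence $\sigma = e$, a contradiction; so $z := \sigma(y) \neq y$, and since $\sigma$ is injective with $\sigma(x)=x$ we also have $z \neq x$. Thus $z \notin \{x,y\}$.

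Next I would transport the twin-pair by conjugation. Using the standard identity $\sigma(a_1\cdots a_k)\sigma^{-1} = (\sigma(a_1)\cdots\sigma(a_k))$ applied to the full disjoint-cycle form of $(xy)$ (namely $(xy)$ together with fixed points, which conjugation again sends to fixed points), we obtain $\sigma (xy) \sigma^{-1} = (\sigma(x)\,\sigma(y)) = (xz)$, a pure transposition lying in $Aut(G)$. Now both $(xy)$ and $(xz)$ belong to $Aut(G)$, and two transpositions sharing the point $x$ generate the symmetric group on $\{x,y,z\}$ acting with all remaining vertices fixed; in particular $(yz) = (xz)(xy)(xz) \in Aut(G)$. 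Invoking $D(G)=2$ through Lemma \ref{L1}, a distinguishing $2$-coloring $c$ must break each of the non-identity automorphisms $(xy)$, $(xz)$, $(yz)$, which forces $c(x) \neq c(y)$, $c(x) \neq c(z)$ and $c(y) \neq c(z)$; that is, $c$ assigns three pairwise distinct colors to $x,y,z$, impossible with only two colors. This contradiction establishes $(xy) \notin Aut(G)$.

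The conceptual crux, and the one place a reader should pause, is the passage from one interchangeable pair to a symmetric triple: the hypothesis $Det(G)=2$ is exactly what guarantees a nontrivial $\sigma$ fixing $x$ while moving $y$, and conjugating $(xy)$ by this $\sigma$ promotes $\{x,y\}$ into $\{x,y,z\}$. Everything else is routine permutation bookkeeping, and the only computation genuinely worth spelling out is the verification that the conjugate of a pure transposition is again a pure transposition.
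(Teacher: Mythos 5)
Your proof is correct, and it takes a genuinely different---and far shorter---route than the paper's. The paper also argues by contradiction from $(xy)\in Aut(G)$, but its strategy is to show that the singleton $\{y\}$ would then be a distinguishing class: this requires the long Claim~\ref{C1}, a case analysis over all possible cycle lengths (odd, even, and mod-$4$ subcases) ruling out every automorphism of the form $(xv_1\dots v_n)(y)\beta$, after which coloring $y$ alone red distinguishes $G$, and Lemma~\ref{L3} yields a determining set of size $1$, contradicting $Det(G)=2$. You instead spend the hypothesis $Det(G)=2$ at the outset rather than at the end: since $\{x\}$ cannot be a determining set, you extract a nontrivial $\sigma$ fixing $x$ and moving $y$, and the conjugation $\sigma(xy)\sigma^{-1}=(xz)$ instantly produces a second pure transposition in $Aut(G)$---sidestepping all control of cycle structure, because conjugating a pure transposition by \emph{any} automorphism is again a pure transposition. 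The triangle of transpositions $(xy),(xz),(yz)$ then defeats any $2$-coloring by Lemma~\ref{L1}. It is worth noting that the paper does reach this same ``three mutual transpositions'' configuration, but only in one subcase of Claim~\ref{C1} (the case $k=0$, where it must first force the residual permutation $\beta$ to be trivial via Propositions~\ref{P5} and~\ref{P6}); your conjugation trick makes that configuration unavoidable from the start. What the paper's longer route buys is the stronger intermediate fact that no automorphism at all fixes $y$ while moving $x$, but since that fact is never reused elsewhere in the paper, your argument could replace the published proof with a substantial saving.
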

	
	\begin{proof}
		
		Suppose to get a contradiction that $\{x, y\}$ is a determining set of such a graph $G$ and $(xy) \in Aut(G)$ (note that the remaining images of this permutation are single cycles).
		\begin{claim} \label{C1}
	 $(xv_1\dots v_n)(y) \beta \not \in Aut(G)$ for any $\beta$ and for any $n$ choice of nodes for $v_i \in V(G)\setminus\{x, y\}$ with $n \geq 1$.
\end{claim}
\begin{proof}[Proof of the claim:]

		This can be broken down into two cases, when $n$ is odd and when $n$ is even. \\
		
		\noindent\underline{\textsc{Case 1:}}   $n = 2k + 1$, for some $k \geq 0$, $k \in \mathbb{N}$ .\\
		
	\noindent	We first consider the case of $k = 0$. Then, $(xv_1)(y)\beta \in Aut(G)$ for some $v_1\in V(G) - \{x, y\}$. It follows that,
	\begin{align}
(xy) \cdot (xv_1)(y)\beta \cdot (xy) = (yv_1)(x) \beta \in Aut(G).
	\end{align}
	 Now,
\vspace{-8mm}
     
	 \begin{align}
(xv_1)(y)\beta \cdot (yv_1)(x)\beta \cdot (xv_1)(y)\beta =  (xy)\beta,
	 \end{align}
	   as $\beta$ is a product of 2-cycles and single cycles by Proposition \ref{P5} (ii)-(iii). Now either $\beta$ has at least one 2-cycle or $\beta$ consists of single cycles only. The former case is not permitted by Proposition \ref{P6}, as $(xy) \in Aut(G)$. For latter case, we can observe then that $(xy), \ (xv_1)$ and $(yv_1) \in Aut(G)$ and it is not possible to non-monochromatically color all of these consistently (meaning, for e.g. giving $x$ the same color in $(xy)$ and $(xv_1)$) with only 2 colors, therefore by Lemma \ref{L1}, $G$ is not 2-color distinguishable which is a contradiction.
		
		Next consider $k \geq 1$. Then, $(xv_1 \dots v_{2k+1})(y)\beta \in Aut(G)$ for some $2k+1$ nodes encoded by the $v_i$s. Observe that,
		\begin{align}
			\begin{split}			
			 ((xv_1 \dots v_{2k+1})(y)\beta)^{k+1} = (xv_{k+1}) (v_1v_{k+2})(y) \gamma \in Aut(G)& \implies  \\
			 (xy) \cdot (xv_{k+1}) (v_1v_{k+2})(y) \gamma \cdot (xy) = (yv_{k+1}) (v_1v_{k+2})(x) \gamma \in Aut(G)&\implies\\
			 (yv_{k+1}) (v_1v_{k+2})(x) \gamma \cdot (xv_{k+1}) (v_1v_{k+2})(y) \gamma \cdot (yv_{k+1}) (v_1v_{k+2})(x)  \gamma &\ =\\   (xy) (v_1v_{k+2}) \gamma \in Aut (G),
			\end{split}
		\end{align}
		
\noindent	 for some $\gamma$. Now as $k \geq 1$, then $n \geq 3$ and so $v_1 \not= v_{k+2}$. Hence, this permutation is not equal to $(xy)$ which contradicts Proposition \ref{P6}.\\
		
		\noindent\underline{\textsc{Case 2:}}  $n = 2k$, for some $k \geq 1$, $k \in \mathbb{N}$.\\
		
	\noindent 	We consider the case of $k = 1$.  Then, $(xv_1v_2)(y)\beta \in Aut(G)$ for some $v_1, v_2 \in V(G) \setminus \{x, y \}$.  It follows that,
\vspace{-8mm}
    
	\begin{align}
	(xy) \cdot (xv_1v_2)(y)\beta \cdot (xy) = (yv_1v_2)(x) \beta \in Aut(G).
		\end{align}
From this observe that,        
	\begin{align}\label{1}
 (xv_1v_2)(y)\beta \cdot (yv_1v_2)(x)\beta &= (xv_1)(yv_2) \beta_2 \in Aut(G)\implies\\  (xy) \cdot (xv_1)(yv_2) \beta_2 \cdot (xy) &= (yv_1)(xv_2) \beta_2 \in Aut (G),\label{2}
	\end{align}
	for some $\beta_2$. From (\ref{1}) and (\ref{2}), we obtain $(xv_1)(yv_2) \beta_2 \cdot (yv_1)(xv_2) \beta_2  = (xy) (v_1v_2) \beta_2^2 \in Aut(G)$. This contradicts Proposition \ref{P6} as $(xy)  \in Aut(G)$.\\
		
	\noindent	Also, $k \geq 2$ breaks into further  cases:  when $n \equiv 0$ \textbf{(mod 4)} and  when $n \equiv 2$ \textbf{(mod 4)}.\\
		
			\item[] \underline{\textsc{Subcase} (i):}  $n \equiv 0$ \textbf{(mod 4)}. Now $(xv_1 \dots v_{2k})(y)\beta \in Aut(G)$ for some $2k$ nodes encoded by the $v_i$s. Therefore,

		\vspace{-5mm}	
			
			\begin{alignat}{3}\label{3}
				(xy) \cdot (xv_1 \dots v_{2k})(y)\beta\cdot (xy) = (yv_1 \dots v_{2k})(x)\beta \in Aut(G)&\implies \\
				(xv_1 \dots v_{2k})(y)\beta \cdot (yv_1 \dots v_{2k})(x)\beta \  & \ =  \\ \label{44}
				(xv_1v_3\dots v_{2k-1})(yv_2v_4 \dots v_{2k}) \beta_2 \in  Aut (G)&\implies\\
					(xy) \cdot (xv_1v_3\dots v_{2k-1})(yv_2v_4 \dots v_{2k}) \beta_2 \cdot(xy) \nonumber\\ =(yv_1v_3\dots v_{2k-1})(xv_2v_4 \dots v_{2k}) \beta_2 \in Aut (G),\label{4}
			\end{alignat}
		for some $\beta_2$.	
        Now, from (\ref{44}) and (\ref{4}) and using that $n = 2k \equiv 0$ \textbf{(mod 4)} we get,
				\begin{alignat}{3}\label{5}
					\begin{split}
				 (xv_1v_3\dots v_{2k-1})(yv_2v_4 \dots v_{2k}) \beta_2 \cdot (yv_1v_3\dots v_{2k-1})(xv_2v_4 \dots v_{2k}) \beta_2 = \\
				 (xv_4v_8 \dots v_{2k}v_1v_5 \dots v_{2k-3}) (yv_3v_7 \dots v_{2k-1} v_2v_6 \dots v_{2k -2})\beta_3 \in Aut (G),
				 	\end{split}
			\end{alignat}

	\noindent for some $\beta_3$.	Using (\ref{5}) we let,

    \vspace{-8mm}
		\begin{align}\label{phi}
			\begin{split}
\varphi = \Big((xv_4v_8 \dots v_{2k}v_1v_5 \dots v_{2k-3}) (yv_3v_7 \dots v_{2k-1} v_2v_6 \dots v_{2k -2})\beta_3\Big) ^{\frac{k}{2}} = \\ (v_1xv_{2k}\dots) (v_2 y v_{2k-1}\dots)\beta_4 \in Aut(G),
	\end{split}
		\end{align}
		
	\noindent	for some $\beta_4$. Now, observe that from (\ref{44}) and (\ref{phi}) we have,
		\begin{align}
			\begin{split}
			\varphi \cdot (xv_1v_3\dots v_{2k-1})(yv_2v_4 \dots v_{2k}) \beta_2 &=\\
			(v_1xv_{2k}\dots) (v_2 y v_{2k-1} \dots)\beta_4 \cdot (xv_1v_3\dots v_{2k-1})(yv_2v_4 \dots v_{2k}) \beta_2&=\\
			(x) (y) (v_{2k}v_{2k-1}  \dots) \in Aut(G).
				\end{split}
		\end{align}
	 Since, $k \geq 2$, then $n \geq 4$, and so $v_{2k} \not = v_{2k-1}$. Thus, this permutation is not the identity permutation, but  fixes $x$ and $y$, which is a contradiction according to Proposition \ref{P4}.\\
	
	\noindent	\underline{\textsc{Subcase} (ii):}  $n \equiv 2$ \textbf{(mod 4)}. Now $(xv_1 \dots v_{2k})(y)\beta \in Aut(G)$. Similarly to the subcase above, properties from (\ref{3})-(\ref{4}) will hold. However, since $n = 2k \equiv 2$ \textbf{(mod 4)}, (\ref{5}) will be modified to,
		\begin{alignat}{3}
			\begin{split}
		 (xv_1v_3\dots v_{2k-1})(yv_2v_4 \dots v_{2k}) \beta_2 \cdot (yv_1v_3\dots v_{2k-1})(xv_2v_4 \dots v_{2k}) \beta_2 =  \\
		(xv_4v_8 \dots v_{2k-2}yv_3v_7 \dots v_{2k-3}) (v_1v_5 \dots v_{2k-1}v_2v_6 \dots v_{2k})\beta_3 \in Aut(G). 
			\end{split}
	\end{alignat}
	
	\noindent	Now, let 
    \vspace{-8mm}
		\begin{align}
			\varphi= \Big((xv_4v_8 \dots v_{2k-2}yv_3v_7 \dots v_{2k-3}) (v_1v_5 \dots v_{2k-1}v_2v_6 \dots v_{2k})\beta_3\Big)^{\frac{k+1}{2}}. 
		\end{align}
	Then, 
    \vspace{-6mm}
    \begin{align}\varphi= (xy) (v_4v_3 \dots) \in Aut(G).
		\end{align}
		 Since, $k \geq 2$ and $n = 2k \equiv 2$ \textbf{(mod 4)}, then $ n \geq 6$. Thus, $v_4 \not = v_3$ and so $\varphi\in Aut(G)$ contradicts Proposition \ref{P6} as $(xy) \in Aut(G)$. This ends the proof of the Claim.
	\end{proof}
		
		Consider a coloring $k: V(G) \rightarrow \{\text{red}, \text{blue}\}$, which colors $y$ red and all other nodes of $G$ blue. Then, by Claim \ref{C1} and the fact  that $(x)(y)\gamma  \not\in Aut(G)\setminus \{e\}$ for any $\gamma$, by Proposition \ref{P4}, we get that no non-trivial permutation of $G$ is color-preserving.  Therefore, $k$ is a distinguishing coloring of $G$. Hence, $\{y\}$ is a distinguishing class and by Lemma \ref{L3}, this implies that $\{y\}$ is a determining set which contradicts that $Det(G) = 2$. Thus, $(xy) \not\in Aut(G)$. 
	\end{proof}
	
\noindent 	Now we are ready to prove our main theorem. Before doing so, we introduce the following terminology.
	
	\begin{definition}
			Let $v_1$, $v_2 \in V(G)$. We call a pair of nodes $(n_1, n_2)$, \textbf{\textit{a neighbour-non-neighbour pair of}}  $\{v_1, v_2\}$ iff either $\{n_1, v_1\}, \{n_2, v_2\} \in E(G)$ and $\{n_1, v_2\}, \{n_2, v_1\} \not\in E(G)$; or  $\{n_1, v_2\}, \{n_2, v_1\} \in E(G)$ and $\{n_1, v_1\}, \{n_2, v_2\} \not\in E(G)$.
	\end{definition}
	
	\begin{theorem}[Main Theorem]\label{main_thm}
		Let $G$ be a graph with $D(G) = 2$ and $Det(G) =2$. Then $2 \leq \rho(G) \leq 4$.
	\end{theorem}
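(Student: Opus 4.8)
The plan is to treat the two inequalities separately, the lower bound being immediate and the upper bound requiring the full structural machinery of Propositions \ref{P4}--\ref{P11} and Lemma \ref{L4}. For $\rho(G)\ge 2$, I would invoke Lemma \ref{L3}: every distinguishing class is in particular a determining set, so its cardinality is at least $Det(G)=2$; since $D(G)=2$ guarantees a $2$-distinguishing coloring exists, $\rho(G)$ is well defined, and hence $\rho(G)\ge 2$.

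For $\rho(G)\le 4$ I would fix a determining set $\{x,y\}$ and build a distinguishing class of size at most $4$ by adjoining one or two further vertices, verifying the result via Lemma \ref{L3}. By Lemma \ref{L5} we have $(xy)\notin Aut(G)$. The first reduction is to describe the automorphisms fixing $\{x,y\}$ setwise but not pointwise: by Proposition \ref{P4} such an automorphism must interchange $x$ and $y$, hence has the form $(xy)\alpha$; by Proposition \ref{P6} there is at most one of them; and by Proposition \ref{P5} its $\alpha$ is a product of disjoint transpositions and fixed points. If no such automorphism exists, then $\{x,y\}$ is itself a distinguishing class by Lemma \ref{L3} and $\rho(G)=2$. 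Otherwise let $\sigma=(xy)(d_1d_2)\cdots\in Aut(G)$ be this unique automorphism.

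The augmentation is chosen to destroy the setwise invariance of $\sigma$ while keeping a determining set. If $\sigma$ has at least two transpositions $(d_1d_2)(d_3d_4)$, I take $S=\{x,y,d_1,d_3\}$, for which $\sigma(S)=\{x,y,d_2,d_4\}\ne S$; if $\sigma$ has a single transposition, I take $S=\{x,y,d_1\}$, again with $\sigma(S)\ne S$. In either case it remains to verify, through Lemma \ref{L3}, that no \emph{other} non-identity automorphism fixes $S$ setwise. For the four-element set this is exactly what Lemma \ref{L4} certifies: I would split according to which of the moving automorphisms $(xd_1)(y)\beta$, $(xd_3)(y)\beta$, $(yd_3)(x)\beta$ lie in $Aut(G)$, matching these to cases (i) and (ii) of Lemma \ref{L4}, with a shorter direct argument for the three-element set. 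The permutations of $S$ one must exclude are precisely those enumerated in the proof of Lemma \ref{L4} --- a single transposition, two transpositions, a $3$-cycle, and a $4$-cycle of the elements of $S$ --- and each is eliminated by one of Propositions \ref{P4}, \ref{P6}, \ref{P7}, \ref{P8}, \ref{P9}, \ref{P11} together with the assumed existence of $\sigma$.

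The main obstacle is the automorphisms whose exclusion is \emph{not} forced group-theoretically: the cross automorphisms such as $(xd_1)(yd_3)\beta$ and the partial-fix automorphisms such as $(xd_1)(y)(d_3)\beta$, which for a careless choice of the adjoined vertices can genuinely belong to $Aut(G)$ and spoil the distinguishing property. This is the point at which the adjacency structure of $G$ must be used, via the neighbour--non-neighbour pair notion: to preclude an automorphism interchanging $x$ with $d_1$ (or $y$ with $d_3$) I would select the adjoined vertices so that some vertex already in $S$ witnesses that $x$ and $d_1$ (respectively $y$ and $d_3$) have distinct neighbourhoods, an adjacency no such interchange can preserve. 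The heart of the proof is then the combinatorial claim that such a choice is always available and that \emph{two} extra vertices always suffice --- i.e. that among the transpositions of $\sigma$ and the neighbour--non-neighbour pairs of $\{x,y\}$ one can always realise the hypotheses of the appropriate case of Lemma \ref{L4} --- which is what ultimately pins the cost at $2$, $3$, or $4$.
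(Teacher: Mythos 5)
Your outline reproduces the paper's skeleton faithfully: the lower bound via Lemma \ref{L3}, the dichotomy on whether some $(xy)\alpha$ lies in $Aut(G)$, the uniqueness and involution structure of that extension via Propositions \ref{P4}, \ref{P5}, \ref{P6}, the use of Lemma \ref{L5}, and the plan of augmenting $\{x,y\}$ and verifying via Lemma \ref{L4}. But the step you defer as ``the heart of the proof'' --- that one can always choose the adjoined vertices so as to realise the hypotheses of Lemma \ref{L4} or a direct argument --- is exactly where all the work of the paper lies, and it is not a routine selection argument; in fact two of your concrete choices fail.

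First, in the single-transposition case $\sigma=(xy)(d_1d_2)$ you take $S=\{x,y,d_1\}$. Nothing group-theoretic rules out $(xd_1)(y)\beta\in Aut(G)$ (or $(xd_2)(y)\beta$), and such an automorphism fixes $S$ setwise without fixing it pointwise, so $S$ is then not a distinguishing class, and no alternative augmentation repairs this. The paper's resolution is of a completely different kind: it shows this configuration cannot occur at all, because $(xy)(d_1d_2)$ together with $(xd_i)(y)\beta$ forces the vertices $x,y,d_1,d_2$ (possibly with one further vertex $d_3$) to have identical neighbourhoods outside themselves, so Lemma \ref{L2} applies and the induced subgraph --- two disjoint edges, $C_4$, or $C_5$ --- would have to be $2$-distinguishable, contradicting $D(G)=2$. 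This impossibility argument via Lemma \ref{L2} appears nowhere in your proposal, and without it the case is open. Second, your claim that two extra vertices always suffice is false. When $\sigma=(xy)(d_1d_2)(d_3d_4)\cdots$ but $(xd_3)(y)\beta,\,(xd_4)(y)\beta\notin Aut(G)$ while $(xd_1)(y)\beta_1\in Aut(G)$ (the paper's Cases 1b(i) and 2a), nothing prevents $\beta_1$ from fixing $d_3$, so the four-element set $\{x,y,d_1,d_3\}$ may be fixed setwise by the partial-fix automorphism $(xd_1)(y)(d_3)\gamma$; the paper must instead drop to the three-element set $\{x,y,d_3\}$ (Claim \ref{claim_rho<3} in Case 2a). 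Note that enlarging a distinguishing class can destroy the property, so ``at most $4$'' cannot be rescued by monotonicity. Moreover, verifying the three-element set in Case 1b(i) requires excluding the $3$-cycle $(xd_3y)\gamma$, which the paper does by a counting argument entirely absent from your plan (Claim \ref{claim_paths}: $d_3$ has one more $2$-edge-path to $x$ than to $y$, which no automorphism sending $(d_3,x)$ to $(y,d_3)$ can respect; adjacency alone does not suffice there because $d_3$ comes from a neighbour-non-neighbour pair of $\{d_1,d_2\}$, not of $\{x,y\}$). So your frame is the right one, but the two ideas that actually close the argument --- the Lemma \ref{L2} impossibility argument and the retreat to a three-element class certified by the $2$-path count --- are missing.
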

	
	\begin{proof}
 		Suppose that $G$ is such a graph and that $\{x, y\}$ is a determining set of $G$. If $(xy)\alpha \not \in Aut(G)$, for any $\alpha$ extending $(xy)$, then $\varphi(\{x, y\}) = \{x, y\}$ implies that $\varphi = e$, the identity permutation, as  $(x)(y)\gamma \not\in Aut(G)\setminus\{e\}$ for any $\gamma$, by Proposition \ref{P4}. Therefore, by Lemma \ref{L3}, $\{x, y\}$ is a distinguishing class for $G$. Thus, coloring $x$ and $y$ red and the remaining nodes blue, is a distinguishing coloring of $G$ and $\rho(G) \leq 2$. Now, $\rho(G) \geq Det(G)$ since any distinguishing class of a distinguishing coloring is a determining set by Lemma \ref{L3}. Hence, $\rho(G)$ cannot be equal to 1 and $\rho(G) = 2$. If however,  $(xy)\alpha \in Aut(G)$, then by Proposition \ref{P5} (i), $\alpha$ consists of 2-cycles and single cycles. Furthermore, by Lemma \ref{L5}, $\alpha$ cannot consist of single cycles only. Therefore, $\alpha$ contains at least one 2-cycle.

        We observe that if there exists an automorphism of $G$ which flips $x$ and $y$ but does not flip any neighbour-non-neighbour pair of $\{x,y\}$ (only flips are possible as $\alpha$ can contain only 2-cycles and single cycles), then this implies that $G$ is invariant by just flipping $x$ and $y$, that is $(xy) \in Aut(G)$. This contradicts to Lemma \ref{L5}.   Therefore, there must exist, $d_1, \ d_2 \in V(G)$ such that $(d_1, d_2)$ is a neighbour-non-neighbour pair of $\{x, y\}$ and $\alpha$ contains $(d_1d_2)$.  Hence, 
        
\vspace{-6mm}
        
    \begin{align}
		(xy)(d_1 d_2)\alpha_2 \in Aut(G), 
		\end{align}

        \noindent for some $\alpha_2$, where $\alpha = (d_1 d_2)\alpha_2$. Also, without loss of generality we have that:

        \vspace{-3mm}
		
\begin{align}\label{xd_1}
			\{x, d_1\},  \{y, d_2\} \in E(G); \quad \text{and}\quad \{x, d_2\},  \{y, d_1\} \not \in E(G).
		\end{align}

        \vspace{2mm}
        
\noindent Whether there are edges between the remaining elements will be considered later.
\begin{center}

\tikzset{every picture/.style={line width=0.75pt}} 

\begin{tikzpicture}[x=0.75pt,y=0.75pt,yscale=-1,xscale=1]

\draw    (79,51) -- (79,118) ;
\draw [shift={(79,118)}, rotate = 90] [color={rgb, 255:red, 0; green, 0; blue, 0 }  ][fill={rgb, 255:red, 0; green, 0; blue, 0 }  ][line width=0.75]      (0, 0) circle [x radius= 2.01, y radius= 2.01]   ;
\draw [shift={(79,51)}, rotate = 90] [color={rgb, 255:red, 0; green, 0; blue, 0 }  ][fill={rgb, 255:red, 0; green, 0; blue, 0 }  ][line width=0.75]      (0, 0) circle [x radius= 2.01, y radius= 2.01]   ;
\draw    (146,51) -- (146,118) ;
\draw [shift={(146,118)}, rotate = 90] [color={rgb, 255:red, 0; green, 0; blue, 0 }  ][fill={rgb, 255:red, 0; green, 0; blue, 0 }  ][line width=0.75]      (0, 0) circle [x radius= 2.01, y radius= 2.01]   ;
\draw [shift={(146,51)}, rotate = 90] [color={rgb, 255:red, 0; green, 0; blue, 0 }  ][fill={rgb, 255:red, 0; green, 0; blue, 0 }  ][line width=0.75]      (0, 0) circle [x radius= 2.01, y radius= 2.01]   ;
\draw  [dash pattern={on 0.84pt off 2.51pt}]  (79,118) -- (146,51) ;
\draw  [dash pattern={on 0.84pt off 2.51pt}]  (79,51) -- (146,118) ;

\draw (77,47.6) node [anchor=south east] [inner sep=0.75pt]    {$x$};
\draw (148,47.6) node [anchor=south west] [inner sep=0.75pt]    {$y$};
\draw (77,121.4) node [anchor=north east] [inner sep=0.75pt]    {$ \begin{array}{l}
d_{1}\\
\end{array}$};
\draw (148,121.4) node [anchor=north west][inner sep=0.75pt]    {$d_{2}$};
\draw (112.5,47.6) node [anchor=south] [inner sep=0.75pt]    {$?$};
\draw (112.5,121.4) node [anchor=north] [inner sep=0.75pt]    {$?$};

\end{tikzpicture}

\end{center}

		\begin{claim}\label{claim_rho<3}
	If $(xd_i)(y)\beta \not\in Aut(G)$ for any $i \in \{1, 2\}$ and any $\beta$, then $\rho(G) \leq 3$.
		\end{claim}		
 \begin{proof}[Proof of the claim:]
 	We will show that $\{x, y, d_1\}$ forms a distinguishing class of $G$. It is a determining set of $G$ as it contains $\{x, y\}$.  By Lemma \ref{L3}, this means that we need to show that no non-identity automorphism fixes $\{x, y, d_1\}$. Now,  $(xd_i)(y)\beta\not\in Aut(G)$ for any $i \in \{1, 2\}$  implies that  $(yd_j)(x)\gamma \not\in Aut(G)$ for any $j \in \{1, 2\}$ and extension  $\gamma$, by Proposition \ref{P10}. Therefore, it is left to show that there is no 3-cycle of $\{x, y, d_1\}$ as part of an automorphism of $Aut(G)$. The only possibilities are $(xyd_1)\gamma_1$ and $(xd_1y)\gamma_2$. Thus, we need to see that $(xyd_1)\gamma_1,(xd_1y)\gamma_2\not\in Aut(G)$ for any extension $\gamma_1$ and $ \gamma_2$. However, since $((xyd_1)\gamma_1)^2 = (xd_1y)\gamma_1^2$, where $\gamma_1^2$ can be taken to be $\gamma_2$, it is enough to show that $(xd_1y)\gamma_2\not\in Aut(G)$, for any extension $\gamma_2$.
		
		By way of contradiction, assume that $(xd_1y)\gamma_2 \in Aut(G)$, for some $\gamma_2$. Now, the  action of $(xd_1y)\gamma_2$ on the edge $\{x, d_1\}$, gives  $(xd_1y)\gamma_2 (\{x, d_1\}) = \{d_1, y\} \not \in E(G)$, which contradicts that automorphisms preserves edges. Therefore, only the identity automorphism preserves $\{x, y, d_1\}$ and so this forms a distinguishing class of $G$. Thus, $\rho(G) \leq 3$. 
		 \end{proof}
		\noindent By the claim above, we may henceforth assume  that,
		\begin{align}\label{(xd_iy)}
			(xd_i)(y)\beta \in Aut(G),
		\end{align}
	 for some $i \in \{1, 2\}$. This situation breaks down into the structure below:\\

		\noindent \textsc{Case 1:} There is \textit{exactly} one neighbour-non-neighbour pair of $\{x, y\}$, $(d_1, d_2)$  and \\ $(xy)(d_1 d_2)\alpha_2 \in Aut(G)$, where $\alpha = (d_1 d_2)\alpha_2$.\ This implies that, 
		\begin{align}\label{N(x)=N(y)}
			N(x) \setminus \{x, y, d_1, d_2\} = N(y) \setminus \{x, y, d_1, d_2\}
		\end{align}
		 and breaks into the subcases below:\\
		\vspace{-3mm}
		
		\begin{enumerate}
			\item[] \textsc{Case 1}a:  $(xy) (d_1 d_2) \in Aut(G)$. That is, $\alpha_2$ consists of single cycles only.\\
		
		\item[] \textsc{Case 1}b:  $(xy)(d_1d_2) \not\in Aut(G)$. Therefore, there must exist a neighbour-non-neighbour pair of $\{d_1, d_2\}$, say $(d_3, d_4)$. Else, $N(d_1) \setminus \{x, y, d_2\} = N(d_2) \setminus \{x, y, d_1\}$. Since $x$ and $y$ have the same neighbours  in $V(G)\setminus\{x, y\}$ apart from $d_1$ and $d_2$, this implies\textit{} that $(xy)(d_1 d_2) \in Aut(G)$ which contradicts the case assumption. Note that without loss of generality, we may assume here that $(xy)(d_1d_2)(d_3d_4)\alpha_3 \in Aut(G)$ (since for some neighbour-non-neighbour pair of $\{d_1, d_2\}$, it must be contained in  $\alpha$, which is the unique extension of $(xy)$).  This further breaks down into the cases:
			\end{enumerate}
	
		\begin{enumerate}
		\item[]
		\begin{enumerate}
			\item[] \textit{Case 1b (i)}: $(xd_i)(y) \beta_i \not \in Aut(G)$, for any $i \in \{3, 4\}$ and any extension $\beta_i$.
		\item[] \textit{Case 1b (ii)}: $(xd_i)(y)\beta_i \in Aut(G)$, for some $i \in \{3, 4\}$ and  extension $\beta_i$.
			\end{enumerate}
				\end{enumerate}
		
			\item[] 	\textsc{Case 2}: There is at least one other  neighbour-non-neighbour pair of $\{x, y\}$ other than $(d_1, d_2)$, say  $ (d_3, d_4)$, and $(xy) (d_1 d_2)(d_3 d_4)\alpha_3 \in Aut(G)$, where $\alpha = (d_1 d_2)(d_3 d_4) \alpha_3$. This is split into the cases:
		\begin{enumerate}
			\item[]  \textsc{Case  2}a: $(xd_i)(y)\beta_i \not \in Aut(G)$ for $ i \in \{3, 4\}$ and any extension $\beta_i$.
			\item[] \textsc{Case 2}b: $(xd_i)(y)\beta_i \in Aut(G)$ for some $ i \in \{3,4\}$ and extension $\beta_i$.\\
		\end{enumerate}
		We will show that  \textsc{Case 1}a is not possible, while  $\rho(G)\leq 3$ or $\rho(G)\leq 4$, for the remaining cases of \textit{Case 1b (i)}, \textit{Case 1b (ii)}, \textsc{Case 2}a and \textsc{Case 2}b.\\
		
		\noindent\underline{\textsc{Case 1}a:} For this case $(xy)(d_1d_2) \in Aut(G)$, with $(xd_i)(y)\beta_i $ for some $i \in \{1, 2\}$ by (\ref{(xd_iy)}).  Therefore, this splits into the cases (i) $(xd_1)(y)\beta_1 \in Aut(G)$ and (ii) $(xd_2)(y) \beta_2 \in Aut(G)$. These  are not exactly the same because $\{x, d_1\} \in E(G)$ but $\{x, d_2\} \not\in E(G)$.  \\
		
	\noindent	(i) Consider  $(xd_1)(y)\beta_1 \in Aut(G)$.  Now, there are two cases for what the image of $d_2$ is in $\beta_1$. Either $(xd_1)(y)(d_2) \beta_4$ or $(xd_1)(y)(d_2d_3)\beta_5$, where $d_2 \not = d_3$, for some $d_3\in V(G)$.  Assume the former, that is
		\begin{align}
			(xd_1)(y)(d_2) \beta_4 \in Aut(G).
		\end{align}
		\noindent  Then,
        \vspace{-5mm}
        \begin{align}
(xy)(d_1d_2) \cdot (xd_1)(y)(d_2) \beta_4 \cdot (xy)(d_1d_2) = (yd_2)(x)(d_1)\beta_4 \in Aut(G).
		  \end{align} Now by Proposition \ref{P5} (ii), $\beta_4$ consists of 2-cycles and single cycles. Therefore, 
	  \begin{align}
 (xd_1)(y)(d_2) \beta_4 \cdot (yd_2)(x)(d_1)\beta_4 = (xd_1)(yd_2) \beta_4^2 = (xd_1)(yd_2) \in Aut(G).	  
\end{align}
 It follows that
  $N(d_1) \setminus \{x, y, d_1, d_2\} = N(x) \setminus \{x, y, d_1, d_2\}$ and $N(d_2) \setminus \{x, y, d_1, d_2\} = N(y) \setminus \{x, y, d_1, d_2\}$. This and  (\ref{N(x)=N(y)}) which holds under \textsc{Case 1}, implies $N(x) \setminus \{x, y, d_1, d_2\} = N(y) \setminus \{x, y, d_1, d_2\} = N(d_1) \setminus \{x, y, d_1, d_2\} = N(d_2) \setminus \{x, y, d_1, d_2\}$. Thus, by Lemma \ref{L2}, a 2-distinguishing coloring of $G$  restricted to $\{x, y, d_1, d_2\}$ must be a 2-distinguishing coloring of the induced subgraph of those nodes. Let us discover what this induced subgraph is: Since the action, $ (xd_1)(y)\beta_1(\{y, d_1\}) = \{x, y\}$ and $\{y, d_1\} \not \in E(G)$, then $\{x, y\} \not\in E(G)$ as automorphisms preserve non-edges. Similarly, $(xd_1)(yd_2) (\{x, y\}) = \{d_1, d_2\}$, and so $\{d_1, d_2\} \not \in Aut(G)$.  Therefore, the induced subgraph of $\{x, y, d_1, d_2\}$ consists  of 2 disjoint edges:
  \begin{center}

\tikzset{every picture/.style={line width=0.75pt}} 

\begin{tikzpicture}[x=0.75pt,y=0.75pt,yscale=-1,xscale=1]
	
	\draw    (205,84.67) -- (205,173.33) ;
	\draw [shift={(205,173.33)}, rotate = 90] [color={rgb, 255:red, 0; green, 0; blue, 0 }  ][fill={rgb, 255:red, 0; green, 0; blue, 0 }  ][line width=0.75]      (0, 0) circle [x radius= 2.01, y radius= 2.01]   ;
	\draw [shift={(205,84.67)}, rotate = 90] [color={rgb, 255:red, 0; green, 0; blue, 0 }  ][fill={rgb, 255:red, 0; green, 0; blue, 0 }  ][line width=0.75]      (0, 0) circle [x radius= 2.01, y radius= 2.01]   ;
	\draw    (293.67,84.67) -- (293.67,173.33) ;
	\draw [shift={(293.67,173.33)}, rotate = 90] [color={rgb, 255:red, 0; green, 0; blue, 0 }  ][fill={rgb, 255:red, 0; green, 0; blue, 0 }  ][line width=0.75]      (0, 0) circle [x radius= 2.01, y radius= 2.01]   ;
	\draw [shift={(293.67,84.67)}, rotate = 90] [color={rgb, 255:red, 0; green, 0; blue, 0 }  ][fill={rgb, 255:red, 0; green, 0; blue, 0 }  ][line width=0.75]      (0, 0) circle [x radius= 2.01, y radius= 2.01]   ;
	\draw  [dash pattern={on 0.84pt off 2.51pt}]  (205,173.33) -- (293.67,173.33) ;
	\draw  [dash pattern={on 0.84pt off 2.51pt}]  (205,84.67) -- (293.67,84.67) ;
	\draw  [dash pattern={on 0.84pt off 2.51pt}]  (205,173.33) -- (293.67,84.67) ;
	\draw  [dash pattern={on 0.84pt off 2.51pt}]  (205,84.67) -- (293.67,173.33) ;
	
	\draw (193.67,68.4) node [anchor=north west][inner sep=0.75pt]    {$x$};
	\draw (295,68.4) node [anchor=north west][inner sep=0.75pt]    {$y$};
	\draw (295.67,176.73) node [anchor=north west][inner sep=0.75pt]    {$d_{2}$};
	\draw (191.67,176.73) node [anchor=north west][inner sep=0.75pt]    {$d_{1}$};

\end{tikzpicture}

  \end{center}
  
 However, 2 disjoint edges cannot be distinguished with 2-colors and so we have a contradiction for the existence of a distinguishing 2-coloring for $G$  by Lemma \ref{L2}. Thus, this case is not possible as $D(G) = 2$.\\
		
		For the latter case,

\vspace{-7mm}
		\begin{align}
		(xd_1)(y)(d_2d_3)\beta_5 \in Aut(G).
		\end{align}
		 Then, 
         \vspace{-5mm}
		 \begin{align}
		 	(xy)(d_1d_2) \cdot (xd_1)(y)(d_2d_3) \beta_5 \cdot (xy)(d_1d_2) = (yd_2)(x)(d_1d_3)\beta_5 \in Aut(G).
		 \end{align}
		  Now by Proposition \ref{P5} (ii), $\beta_5$ consists of 2-cycles and single cycles. Therefore,
		  \begin{align}
		  	(xd_1)(y)(d_2d_3) \beta_5 \cdot (yd_2)(x)(d_1d_3)\beta_5 = (xd_1d_2yd_3) \beta_5^2 = (xd_1d_2yd_3) \in Aut(G).
		  \end{align}
		   This implies that each power of this cycle is in $Aut(G)$. Thus, for any  $i, j \in \{x, y, d_1, d_2, d_3\}$ there is a power of this cycle, $\varphi_{ij}$ say, which is such that $\varphi_{ij}(i) = j$. It follows that $N(i) \setminus \{ x, y, d_1, d_2, d_3\} = N(j) \setminus \{ x, y, d_1, d_2, d_3\}$, for any $i, j \in \{x, y, d_1, d_2, d_3\}$. By Lemma \ref{L2}, we know that a 2-distinguishing coloring of $G$ when restricted to $\{x, y, d_1, d_2, d_3\}$ must be a 2-distinguishing coloring of the induced subgraph.   Since the action $ (xd_1)(y)\beta_1(\{y, d_1\}) = \{x, y\}$ and $\{y, d_1\} \not \in E(G)$, then $\{x, y\} \not\in 
		   E(G)$. Also, $(xd_1d_2yd_3)$ acting on edges  $\{x, d_1\}$ and $\{y, d_2\}$, takes them to $\{d_1, d_2\}$ and $\{y, d_3\}$ and so these must be edges. It also takes the non-edges $\{y, d_1\}$ and $\{x, y\}$ to $\{d_2, d_3\}$ and $\{d_1, d_3\}$ respectively. Moreover, $(xd_1d_2yd_3)^2 = (xd_2d_3d_1y)$ takes the edge $\{y, d_2\}$  to $\{x, d_3\}$. Overall, the induced graph of $\{x, y, d_1, d_2, d_3\}$ is a 5-cycle, $C_5$. 
		   \begin{center}

		   	\tikzset{every picture/.style={line width=0.75pt}} 
		   	
		   	\begin{tikzpicture}[x=0.75pt,y=0.75pt,yscale=-1,xscale=1]
		   		
		   		\draw   (318.74,107.45) -- (300.98,161) -- (244.55,160.66) -- (227.44,106.89) -- (273.29,74) -- cycle ;
		   		\draw    (273.29,74) ;
		   		\draw [shift={(273.29,74)}, rotate = 0] [color={rgb, 255:red, 0; green, 0; blue, 0 }  ][fill={rgb, 255:red, 0; green, 0; blue, 0 }  ][line width=0.75]      (0, 0) circle [x radius= 2.01, y radius= 2.01]   ;
		   		\draw    (227.44,106.89) ;
		   		\draw [shift={(227.44,106.89)}, rotate = 0] [color={rgb, 255:red, 0; green, 0; blue, 0 }  ][fill={rgb, 255:red, 0; green, 0; blue, 0 }  ][line width=0.75]      (0, 0) circle [x radius= 2.01, y radius= 2.01]   ;
		   		\draw    (318.74,107.45) ;
		   		\draw [shift={(318.74,107.45)}, rotate = 0] [color={rgb, 255:red, 0; green, 0; blue, 0 }  ][fill={rgb, 255:red, 0; green, 0; blue, 0 }  ][line width=0.75]      (0, 0) circle [x radius= 2.01, y radius= 2.01]   ;
		   		\draw    (244.55,160.66) ;
		   		\draw [shift={(244.55,160.66)}, rotate = 0] [color={rgb, 255:red, 0; green, 0; blue, 0 }  ][fill={rgb, 255:red, 0; green, 0; blue, 0 }  ][line width=0.75]      (0, 0) circle [x radius= 2.01, y radius= 2.01]   ;
		   		\draw    (300.98,161) ;
		   		\draw [shift={(300.98,161)}, rotate = 0] [color={rgb, 255:red, 0; green, 0; blue, 0 }  ][fill={rgb, 255:red, 0; green, 0; blue, 0 }  ][line width=0.75]      (0, 0) circle [x radius= 2.01, y radius= 2.01]   ;
		   		\draw  [dash pattern={on 0.84pt off 2.51pt}]  (227.44,106.89) -- (318.74,107.45) ;
		   		\draw  [dash pattern={on 0.84pt off 2.51pt}]  (244.55,160.66) -- (318.74,107.45) ;
		   		\draw  [dash pattern={on 0.84pt off 2.51pt}]  (300.98,161) -- (273.29,74) ;
		   		\draw  [dash pattern={on 0.84pt off 2.51pt}]  (244.55,160.66) -- (273.29,74) ;
		   		\draw  [dash pattern={on 0.84pt off 2.51pt}]  (227.44,106.89) -- (300.98,161) ;
		   		
		   		\draw (213.67,108.07) node [anchor=north west][inner sep=0.75pt]    {$x$};
		   		\draw (231,162.07) node [anchor=north west][inner sep=0.75pt]    {$d_{1}$};
		   		\draw (302.98,164.4) node [anchor=north west][inner sep=0.75pt]    {$d_{2}$};
		   		\draw (320.74,110.85) node [anchor=north west][inner sep=0.75pt]    {$y$};
		   		\draw (273.67,55.4) node [anchor=north west][inner sep=0.75pt]    {$ \begin{array}{l}
		   				d_{3}\\
		   			\end{array}$};

		   	\end{tikzpicture}
		   	
		   \end{center}
		   
		   However, $D(C_5)=3$, which contradicts to Lemma \ref{L2} and the existence of a distinguishing 2-coloring for $G$. Thus, this case  is also not possible, as $D(G)=2$.\\

		(ii) Consider  $(xd_2)(y)\beta_1 \in Aut(G)$.  Now, there are two cases for what the image of $d_1$ is in $\beta_2$. That is, either $(xd_2)(y)(d_1) \beta_4$ or $(xd_2)(y)(d_1d_3)\beta_5$, where $d_1 \not = d_3$. 
		
	\noindent	Assume the former case of
		\begin{align}
		(xd_2)(y)(d_1) \beta_4 \in Aut(G).
		\end{align}
		 Then,

         \vspace{-8mm}
		 \begin{align}
		 	(xy)(d_1d_2) \cdot (xd_2)(y)(d_1) \beta_4 \cdot (xy)(d_1d_2) = (yd_1)(x)(d_2)\beta_4 \in Aut(G).
		 \end{align}
		  Now by Proposition \ref{P5} (ii), $\beta_4$ consists of 2-cycles and single cycles. Therefore,
		  \begin{align}
(xd_2)(y)(d_1) \beta_4 \cdot (yd_1)(x)(d_2)\beta_4 = (xd_2)(yd_1) \beta_4^2 = (xd_2)(yd_1) \in Aut(G).
		  \end{align}
		  It follows that $N(d_2) \setminus \{x, y, d_1, d_2\} = N(x) \setminus \{x, y, d_1, d_2\}$ and $N(d_1) \setminus \{x, y, d_1, d_2\} = N(y) \setminus \{x, y, d_1, d_2\}$. Together with (\ref{N(x)=N(y)}), this implies that  $N(x) \setminus \{x, y, d_1, d_2\} = N(y) \setminus \{x, y, d_1, d_2\} = N(d_1) \setminus \{x, y, d_1, d_2\} = N(d_2) \setminus \{x, y, d_1, d_2\}$. By Lemma \ref{L2}, the induced subgraph of $\{x, y, d_1, d_2\}$ must have a 2-distinguishing coloring. Now, as the action $ (xd_2)(y)\beta_1(\{y, d_2\}) = \{x, y\}$ and $\{y, d_2\}  \in E(G)$, then $\{x, y\} \in E(G)$. Also, $(xd_1)(yd_2) (\{x, y\}) = \{d_1, d_2\}$, and so $\{d_1, d_2\} \in Aut(G)$. Therefore, the induced subgraph of $\{x, y, d_1, d_2\}$ is $C_4$:
		  \begin{center}

		  	\tikzset{every picture/.style={line width=0.75pt}} 
		  	
		  	\begin{tikzpicture}[x=0.75pt,y=0.75pt,yscale=-1,xscale=1]
		  		
		  		\draw  [dash pattern={on 0.84pt off 2.51pt}]  (201.67,296) -- (290.33,207.33) ;
		  		\draw  [dash pattern={on 0.84pt off 2.51pt}]  (201.67,207.33) -- (290.33,296) ;
		  		\draw   (201.67,207.33) -- (290.33,207.33) -- (290.33,296) -- (201.67,296) -- cycle ;
		  		\draw    (201.67,207.33) ;
		  		\draw [shift={(201.67,207.33)}, rotate = 0] [color={rgb, 255:red, 0; green, 0; blue, 0 }  ][fill={rgb, 255:red, 0; green, 0; blue, 0 }  ][line width=0.75]      (0, 0) circle [x radius= 2.01, y radius= 2.01]   ;
		  		\draw [shift={(201.67,207.33)}, rotate = 0] [color={rgb, 255:red, 0; green, 0; blue, 0 }  ][fill={rgb, 255:red, 0; green, 0; blue, 0 }  ][line width=0.75]      (0, 0) circle [x radius= 2.01, y radius= 2.01]   ;
		  		\draw    (290.33,207.33) ;
		  		\draw [shift={(290.33,207.33)}, rotate = 0] [color={rgb, 255:red, 0; green, 0; blue, 0 }  ][fill={rgb, 255:red, 0; green, 0; blue, 0 }  ][line width=0.75]      (0, 0) circle [x radius= 2.01, y radius= 2.01]   ;
		  		\draw [shift={(290.33,207.33)}, rotate = 0] [color={rgb, 255:red, 0; green, 0; blue, 0 }  ][fill={rgb, 255:red, 0; green, 0; blue, 0 }  ][line width=0.75]      (0, 0) circle [x radius= 2.01, y radius= 2.01]   ;
		  		\draw    (201.67,296) ;
		  		\draw [shift={(201.67,296)}, rotate = 0] [color={rgb, 255:red, 0; green, 0; blue, 0 }  ][fill={rgb, 255:red, 0; green, 0; blue, 0 }  ][line width=0.75]      (0, 0) circle [x radius= 2.01, y radius= 2.01]   ;
		  		\draw [shift={(201.67,296)}, rotate = 0] [color={rgb, 255:red, 0; green, 0; blue, 0 }  ][fill={rgb, 255:red, 0; green, 0; blue, 0 }  ][line width=0.75]      (0, 0) circle [x radius= 2.01, y radius= 2.01]   ;
		  		\draw    (290.33,296) ;
		  		\draw [shift={(290.33,296)}, rotate = 0] [color={rgb, 255:red, 0; green, 0; blue, 0 }  ][fill={rgb, 255:red, 0; green, 0; blue, 0 }  ][line width=0.75]      (0, 0) circle [x radius= 2.01, y radius= 2.01]   ;
		  		\draw [shift={(290.33,296)}, rotate = 0] [color={rgb, 255:red, 0; green, 0; blue, 0 }  ][fill={rgb, 255:red, 0; green, 0; blue, 0 }  ][line width=0.75]      (0, 0) circle [x radius= 2.01, y radius= 2.01]   ;
		  		
		  		\draw (189,189.73) node [anchor=north west][inner sep=0.75pt]    {$x$};
		  		\draw (293,188.4) node [anchor=north west][inner sep=0.75pt]    {$y$};
		  		\draw (292.33,299.4) node [anchor=north west][inner sep=0.75pt]    {$d_{2}$};
		  		\draw (182.33,298.73) node [anchor=north west][inner sep=0.75pt]    {$d_{1}$};
		  				  		
		  	\end{tikzpicture}
		  	
		  \end{center} 
		   However, $C_4$ cannot be distinghuished with 2-colors, which contradicts to Lemma \ref{L2} and the 2-coloring for $G$. Thus, this case is not possible as $D(G) = 2$.\\
		
		For the latter case, 
		\begin{align}
		(xd_2)(y)(d_1d_3)\beta_5 \in Aut(G).
		\end{align}
		Then, 
        \vspace{-3mm}
		\begin{align}
(xy)(d_1d_2) \cdot (xd_2)(y)(d_1d_3) \beta_5 \cdot (xy)(d_1d_2) = (yd_1)(x)(d_2d_3)\beta_5 \in Aut(G).
		\end{align}
	  Now by Proposition \ref{P5} (ii), $\beta_5$ consists of 2-cycles and single cycles. Therefore, 
	  \begin{align}
	  	(xd_2)(y)(d_1d_3) \beta_5 \cdot (yd_1)(x)(d_2d_3)\beta_5 = (xd_2d_1yd_3) \beta_5^2 = (xd_2d_1yd_3) \in Aut(G), 
	  \end{align}
	  
	   \noindent and each power of this cycle is in $Aut(G)$. Hence, for any  $i, j \in \{x, y, d_1, d_2, d_3\}$ there is a power of this cycle, $\varphi_{ij}$ say, which is such that $\varphi_{ij}(i) = j$. This implies that $N(i) \setminus \{ x, y, d_1, d_2, d_3\} = N(j) \setminus \{ x, y, d_1, d_2, d_3\}$, for any $i, j \in \{x, y, d_1, d_2, d_3\}$. By Lemma \ref{L2}, the induced subgraph of $\{x, y, d_1, d_2, d_3\}$ must have a 2-distinguishing coloring.  Now, as $ (xd_2)(y)\beta_1(\{y, d_2\}) = \{x, y\}$ and $\{y, d_2\}  \in E(G)$, then $\{x, y\} \in E(G)$. Also, $(xd_2d_1yd_3)$ acting on the edges $\{x, y\}$ and $\{y, d_2\}$, takes them to $\{d_2, d_3\}$ and $\{d_1, d_3\}$ and so these must be edges. It also takes the non-edges $\{x, d_2\}$ and $\{y, d_1\}$ to $\{d_1, d_2\}$ and $\{y, d_3\}$ respectively. Moreover, $(xd_2d_1yd_3)^2 = (xd_1d_3d_2y)$ takes the non-edge $\{y, d_1\}$ to $\{x, d_3\}$. Overall, the induced graph of $\{x, y, d_1, d_2, d_3\}$ is a 5-cycle:
	   \begin{center}

	   	\tikzset{every picture/.style={line width=0.75pt}} 
	   	
	   	\begin{tikzpicture}[x=0.75pt,y=0.75pt,yscale=-1,xscale=1]
	   		
	   		\draw   (318.74,107.45) -- (300.98,161) -- (244.55,160.66) -- (227.44,106.89) -- (273.29,74) -- cycle ;
	   		\draw    (273.29,74) ;
	   		\draw [shift={(273.29,74)}, rotate = 0] [color={rgb, 255:red, 0; green, 0; blue, 0 }  ][fill={rgb, 255:red, 0; green, 0; blue, 0 }  ][line width=0.75]      (0, 0) circle [x radius= 2.01, y radius= 2.01]   ;
	   		\draw    (227.44,106.89) ;
	   		\draw [shift={(227.44,106.89)}, rotate = 0] [color={rgb, 255:red, 0; green, 0; blue, 0 }  ][fill={rgb, 255:red, 0; green, 0; blue, 0 }  ][line width=0.75]      (0, 0) circle [x radius= 2.01, y radius= 2.01]   ;
	   		\draw    (318.74,107.45) ;
	   		\draw [shift={(318.74,107.45)}, rotate = 0] [color={rgb, 255:red, 0; green, 0; blue, 0 }  ][fill={rgb, 255:red, 0; green, 0; blue, 0 }  ][line width=0.75]      (0, 0) circle [x radius= 2.01, y radius= 2.01]   ;
	   		\draw    (244.55,160.66) ;
	   		\draw [shift={(244.55,160.66)}, rotate = 0] [color={rgb, 255:red, 0; green, 0; blue, 0 }  ][fill={rgb, 255:red, 0; green, 0; blue, 0 }  ][line width=0.75]      (0, 0) circle [x radius= 2.01, y radius= 2.01]   ;
	   		\draw    (300.98,161) ;
	   		\draw [shift={(300.98,161)}, rotate = 0] [color={rgb, 255:red, 0; green, 0; blue, 0 }  ][fill={rgb, 255:red, 0; green, 0; blue, 0 }  ][line width=0.75]      (0, 0) circle [x radius= 2.01, y radius= 2.01]   ;
	   		\draw  [dash pattern={on 0.84pt off 2.51pt}]  (227.44,106.89) -- (318.74,107.45) ;
	   		\draw  [dash pattern={on 0.84pt off 2.51pt}]  (244.55,160.66) -- (318.74,107.45) ;
	   		\draw  [dash pattern={on 0.84pt off 2.51pt}]  (300.98,161) -- (273.29,74) ;
	   		\draw  [dash pattern={on 0.84pt off 2.51pt}]  (244.55,160.66) -- (273.29,74) ;
	   		\draw  [dash pattern={on 0.84pt off 2.51pt}]  (227.44,106.89) -- (300.98,161) ;
	   		
	   		\draw (213.67,108.07) node [anchor=north west][inner sep=0.75pt]    {$x$};
	   		\draw (231,162.07) node [anchor=north west][inner sep=0.75pt]    {$d_{1}$};
	   		\draw (320.74,108.07) node [anchor=north west][inner sep=0.75pt]    {$d_{2}$};
	   		\draw (277.41,57.51) node [anchor=north west][inner sep=0.75pt]    {$y$};
	   		\draw (295,162.07) node [anchor=north west][inner sep=0.75pt]    {$ \begin{array}{l}
	   				d_{3}\\
	   			\end{array}$};

	   	\end{tikzpicture}
	   	
	   \end{center}
	   
	 \noindent  However, $C_5$ cannot be distinguished with 3 colors and so this contradicts that $D(G)=2$ by Lemma \ref{L2}.

		\hfill\\ 
	\noindent \underline{\textsc{Case 1}b:}   Here, $(xy)(d_1d_2)(d_3d_4)\alpha_3 \in Aut(G)$, where $(d_3, d_4)$ is a neighbour-non-neighbour pair of $\{d_1, d_2\}$. Suppose, without loss of generality, that 
	 \begin{align}\label{d_1d_3}
	 \{d_1, d_3\}, \ \{d_2, d_4\} \in E(G) \quad \{d_3, d_2\}, \ \{d_4, d_1\} \not \in E(G).
	 \end{align}
     Thus we have a pattern as follows:
\begin{center}

\tikzset{every picture/.style={line width=0.75pt}} 

\begin{tikzpicture}[x=0.75pt,y=0.75pt,yscale=-1,xscale=1]

\draw    (79,51) -- (79,118) ;
\draw [shift={(79,118)}, rotate = 90] [color={rgb, 255:red, 0; green, 0; blue, 0 }  ][fill={rgb, 255:red, 0; green, 0; blue, 0 }  ][line width=0.75]      (0, 0) circle [x radius= 2.01, y radius= 2.01]   ;
\draw [shift={(79,51)}, rotate = 90] [color={rgb, 255:red, 0; green, 0; blue, 0 }  ][fill={rgb, 255:red, 0; green, 0; blue, 0 }  ][line width=0.75]      (0, 0) circle [x radius= 2.01, y radius= 2.01]   ;
\draw    (146,51) -- (146,118) ;
\draw [shift={(146,118)}, rotate = 90] [color={rgb, 255:red, 0; green, 0; blue, 0 }  ][fill={rgb, 255:red, 0; green, 0; blue, 0 }  ][line width=0.75]      (0, 0) circle [x radius= 2.01, y radius= 2.01]   ;
\draw [shift={(146,51)}, rotate = 90] [color={rgb, 255:red, 0; green, 0; blue, 0 }  ][fill={rgb, 255:red, 0; green, 0; blue, 0 }  ][line width=0.75]      (0, 0) circle [x radius= 2.01, y radius= 2.01]   ;
\draw  [dash pattern={on 0.84pt off 2.51pt}]  (79,118) -- (146,51) ;
\draw  [dash pattern={on 0.84pt off 2.51pt}]  (79,51) -- (146,118) ;
\draw    (79,118) -- (79,185) ;
\draw [shift={(79,185)}, rotate = 90] [color={rgb, 255:red, 0; green, 0; blue, 0 }  ][fill={rgb, 255:red, 0; green, 0; blue, 0 }  ][line width=0.75]      (0, 0) circle [x radius= 2.01, y radius= 2.01]   ;
\draw [shift={(79,118)}, rotate = 90] [color={rgb, 255:red, 0; green, 0; blue, 0 }  ][fill={rgb, 255:red, 0; green, 0; blue, 0 }  ][line width=0.75]      (0, 0) circle [x radius= 2.01, y radius= 2.01]   ;
\draw    (146,118) -- (146,185) ;
\draw [shift={(146,185)}, rotate = 90] [color={rgb, 255:red, 0; green, 0; blue, 0 }  ][fill={rgb, 255:red, 0; green, 0; blue, 0 }  ][line width=0.75]      (0, 0) circle [x radius= 2.01, y radius= 2.01]   ;
\draw [shift={(146,118)}, rotate = 90] [color={rgb, 255:red, 0; green, 0; blue, 0 }  ][fill={rgb, 255:red, 0; green, 0; blue, 0 }  ][line width=0.75]      (0, 0) circle [x radius= 2.01, y radius= 2.01]   ;
\draw  [dash pattern={on 0.84pt off 2.51pt}]  (79,185) -- (146,118) ;
\draw  [dash pattern={on 0.84pt off 2.51pt}]  (79,118) -- (146,185) ;

\draw (77,47.6) node [anchor=south east] [inner sep=0.75pt]    {$x$};
\draw (148,47.6) node [anchor=south west] [inner sep=0.75pt]    {$y$};
\draw (77,121.4) node [anchor=north east] [inner sep=0.75pt]    {$ \begin{array}{l}
d_{1}\\
\end{array}$};
\draw (148,121.4) node [anchor=north west][inner sep=0.75pt]    {$d_{2}$};
\draw (77,188.4) node [anchor=north east] [inner sep=0.75pt]    {$ \begin{array}{l}
d_{3}\\
\end{array}$};
\draw (148,188.4) node [anchor=north west][inner sep=0.75pt]    {$d_{4}$};

\end{tikzpicture}

\end{center}
     \noindent \underline{\textit{Case 1b (i)}:} By (\ref{(xd_iy)}) we have $(xd_i)(y) \beta_i \in Aut(G)$, for some $i \in \{1, 2\}$, and $(xd_j)(y) \beta_j \not\in Aut(G)$, for all $j \in \{3, 4\}$, and extension $\beta_j$.
	We will show that $\{x, y, d_3\}$ forms a distinguishing class of $G$ using Lemma \ref{L3}. This is a determining set as it contains $\{x, y\}$.  Now, by case assumption $(xd_j)(y)\beta_j \not\in Aut(G)$ for any $j \in \{3, 4\}$ and extension $\beta_j$. This implies that  $(yd_j)(x)\beta_{j} \not\in Aut(G)$ for any $j \in \{3, 4\}$ and extension $\beta_j$, by Proposition \ref{P10}. Therefore, it is left to show that there is no 3-cycle of $\{x, y, d_3\}$ as part of a permutation of $Aut(G)$. The only possibilities are $(xyd_3)\gamma_1$ and $(xd_3y)\gamma_2$, where $\gamma_1,\gamma_2$ are permutations of $V(G) \setminus\{x, y, d_3\}$. However, as $((xyd_3)\gamma_1)^2 = (xd_3y)\gamma_1^2$, where $\gamma_1^2$ can be taken to be $\gamma_2$, it is enough to show that $(xd_3y)\gamma_2$ is not in $Aut(G).$  Next, to get a contradiction, assume that $(xd_3y)\gamma_2 \in Aut(G)$, for some $\gamma_2$.
		
		\begin{claim}\label{claim_paths}
		In $G$, $d_3 $ has one more 2-edge-path connecting it to $x$ than it has connecting it to $y$.
	\end{claim}

	\begin{proof}[Proof of the claim:]
	
 Now, $d_3vx$ is the form of a 2-edge-path (2-path) from $d_3$ to $x$, where $v$ is a variable for a node which can take values from $V(G) \setminus \{d_3, x\}$. If $v \in V(G) \setminus \{x, y, d_1, d_2\}$, then $d_3vx$ is a path in $G$ iff $d_3vy$ is a path in $G$, since  $\{v, x\} \in E(G)$ iff $\{v, y\} \in E(G)$ as $N(x) \setminus \{x, y, d_1, d_2\} = N(y) \setminus \{x, y, d_1, d_2\}$ by (\ref{N(x)=N(y)}). We are left with considering $v$ from $\{x, y, d_1, d_2\}$.  Next, observe that $(xd_3y)\gamma_2(\{d_3, x\}) = \{y, d_3\}$, therefore $\{d_3, x\} \in E(G)$ iff $\{ d_3, y\} \in E(G)$. This covers the case of $v=x$ or $v=y$. Hence, $d_3xy$ is a 2-path  of $G$ iff $d_3yx$ is a 2-path of $G$. For $v=d_2$, $d_3d_2x$ is not a 2-path in $G$ as $\{d_2, x\} \not\in E(G)$ by (\ref{xd_1}), and also $d_3d_2y$ is also not a 2-path as  $\{d_3, d_2\} \not\in E(G)$ by (\ref{d_1d_3}).  For $v =d_1$ however, we have that $d_3d_1x$ is a 2-path of $G$ by (\ref{d_1d_3}), but $d_3d_1y$ is not a 2-path since $\{d_1, y\} \not \in E(G)$ by (\ref{xd_1}). Overall: $d_3vx$ is a 2-path of $G$ iff $d_3vy$ is a 2-path of $G$ if $v \in V(G) \setminus \{ x, y, d_1\}$; $d_3yx$ is a 2-path iff $d_3xy$ is a 2-path but; $d_3d_1x$ is a 2-path whilst $d_3d_1y$ is not a 2-path. Therefore, $d_3$ has one more 2-path to $x$ than it does to $y$. 	\end{proof}
		
		Notice that the action of $(xd_3y)\gamma_2$ on the pair $(d_3, x)$ takes it to $(y, d_3)$. However, automorphisms preserve the number of 2-paths between nodes (note that number of 2-paths from $d_3$ to $y$ equals number of 2-paths from $y$ to $d_3$) and so this contradicts Claim \ref{claim_paths}. Therefore, $(xd_3y)\gamma_2 \not\in Aut(G), \ \{x, y, d_3\}$ forms a distinguishing class of $G$ and $\rho (G) \leq 3$.\\
		
		\noindent \underline{\textit{Case 1b (ii)}:} By (\ref{xd_1}) we have that $(xd_i)(y) \beta_i \in Aut(G)$, for some $i \in \{1, 2\}$ and $(xd_j)(y) \beta_j \in Aut(G)$, for some $j \in \{3, 4\}$, by assumption. Also, $(xy)(d_1d_2)(d_3d_4)\alpha_3 \in Aut(G)$, where $(d_3, d_4)$ is a neighbour-non-neighbour pair of $\{d_1, d_2\}$. Without loss of generality, we can suppose that (\ref{d_1d_3}) is the case. This situation breaks down into the following cases:\\
		
		(A)(i) $(xd_1)(y)\beta_1, (xd_3)(y)\beta_3 \in Aut(G)$ and (ii) $(xd_1)(y)\beta_1, (xd_4)(y)\beta_4 \in Aut(G)$,\\
		
		(B)(i) $(xd_2)(y)\beta_2, (xd_3)(y)\beta_3 \in Aut(G)$ and (ii) $(xd_2)(y)\beta_2, (xd_4)(y)\beta_4 \in Aut(G)$,\\
		
		\noindent for some $\beta_1,\beta_2,\beta_3$ and $\beta_4$. Also,  $\{x, y, d_1, d_3\}$ is a determing set of $G$ as it contains $\{x, y\}$. We now show that it forms a distinguishing class  of $G$  using Lemma \ref{L3} and so $\rho(G) \leq 4$ for (A). \\
		
		For (A): Here, $(xd_1)(y)\beta_1 \in Aut(G)$. Now, $(xd_1)(y)\beta_1(\{y, d_1\}) = \{x, y\}$, so we get that
	\begin{align}\label{not_xy}
		\{x, y\} \not\in E(G),
	\end{align}
		as $\{y, d_1\} \not \in E(G)$. Next, we will show that,
		\begin{align}\label{not_xd_1}
		(xd_1)(yd_3) \gamma_1, (xd_3)(yd_1)\gamma_2 \not\in Aut(G),
		\end{align}
		 for any $\gamma_1,\gamma_2$. Suppose to get a contradiction that $(xd_3)(yd_1) \gamma_2 \in Aut(G)$, for some $\gamma_2$. Then, the action $(xd_3)(yd_1)\gamma_2(\{ d_1, d_3\}) = \{x, y\}$, but $\{d_1, d_3\} \in E(G)$ whilst $\{x, y\} \not\in E(G)$ and this is a contradiction. Similarly, assume to get a contradiction that $(xd_1)(yd_3) \gamma_1 \\ \in Aut(G)$. Then, $(xd_1)(yd_3)\gamma_1(\{ d_1, d_3\})  = \{x, y\}$ but $\{d_1, d_3\} \in E(G)$ whilst $\{x, y\} \not \in E(G)$ and this is a contradiction. \\
		
		For (A)(i): Since, $(xd_1)(y)\beta_1, (xd_3)(y)\beta_3 \in Aut(G)$ and $(xd_1)(yd_3) \gamma_1, (xd_3)(yd_1)\gamma_2 \not\in Aut(G)$, we can apply Lemma \ref{L4}(i), to get that $\{x, y, d_1, d_3\}$ is a distinguishing class of $G$ and so $\rho(G) \leq 4$.\\
		
		For (A)(ii): Here, $(xd_1)(y)\beta_1, (xd_4)(y)\beta_4 \in Aut(G)$. 
		Since $(xy)(d_1d_2)(d_3d_4)\alpha_3$ equals to $(xy) (d_3d_4)(d_1d_2)\alpha_3$, 
		 using Proposition \ref{P10} on $(xd_4)(y)\beta_4 \in Aut(G)$, we have $(yd_3)(x)\beta_5 \in Aut (G)$. Here, we will apply Lemma \ref{L4} (ii) on $(xd_1)(y)\beta_1, (yd_3)(x)\beta_5 \in Aut(G)$. Therefore, we need to further show that,
		 \begin{align}\label{excluded}
		 (xd_1)(y)(d_3)\beta_6, \ (yd_3)(x)(d_1)\beta_7\not\in Aut(G).
		 \end{align}

	\noindent	Assume to get a contradiction that $(xd_1)(y)(d_3)\beta_6 \in Aut(G)$ for some $\beta_6$. Let
		\begin{align}
\varphi = (yd_3)(x) \beta_5 \cdot (xd_1)(y) (d_3)\beta_6\in Aut(G).
		\end{align}
		  Now, $\varphi(d_1) = x$ and $\varphi(d_3) = y$. Hence, the action $\varphi(\{d_1, d_3\}) = \{x, y\}$ but $\{d_1, d_3\} \in E(G)$ and $\{x, y\} \not\in E(G)$, which contradicts to (\ref{d_1d_3}) and (\ref{not_xy}). Similarly, suppose to get a contradiction, that $(yd_3)(x)(d_1)\beta_7 \in Aut(G)$. Let $\varphi = (xd_1)(y)\beta_1 \cdot (yd_3)(x)(d_1)\beta_7$. Then, $\varphi(d_1) = x$ and $\varphi(d_3) = y$. Hence, $\varphi(\{d_1, d_3\}) = \{x, y\}$ but $\{d_1, d_3\} \in E(G)$ and $\{x, y\} \not\in E(G)$ which is a contradiction. Thus (\ref{not_xd_1}) and (\ref{excluded}) hold and we can apply Lemma \ref{L4} (ii) to get that $\{x, y, d_1, d_3\}$ is a distinguishing class of $G$ and so $\rho(G) \leq 4$.\\
		
		\noindent  Next for (B),  notice that $\{x, y, d_2, d_3\}$ is a determining set of $G$ as it contains $\{x, y\}$. We now show that it forms a distinguishing class of $G$  using Lemma \ref{L3} and so $\rho(G) \leq 4$. \\
		
		For (B): Here, $(xd_2)(y)\beta_2 \in Aut(G)$. Now, $(xd_2)(y)\beta_2(\{y, d_2\}) = \{x, y\}$, so we get that
		\begin{align}\label{xy}
			\{x, y\} \in E(G),
		\end{align}
		  as $\{y, d_2\}  \in E(G)$. Similarly to (\ref{not_xd_1}), we will show that 
		  \begin{align}\label{52}
(xd_2)(yd_3) \gamma_1, (xd_3)(yd_2)\gamma_2 \not\in Aut(G),
		  \end{align}
		  for all $\gamma_1,\gamma_2$. Suppose to get a contradiction that $(xd_2)(yd_3) \gamma_1 \in Aut(G)$. Then
		  \begin{align}
		  	(xd_2)(yd_3)\gamma_1(\{ d_2, d_3\}) = \{x, y\},
		  \end{align}
		  but $\{d_2, d_3\} \not\in E(G)$, which contradicts to (\ref{xy}). Similarly, assume to get a contradiction that $(xd_3)(yd_2) \gamma_2 \in Aut(G)$. Then, $(xd_3)(yd_2)\gamma_2(\{ d_2, d_3\})  = \{x, y\}$, but $\{d_2, d_3\} \not\in E(G)$.\\
		
		 For (B)(i): Since, $(xd_2)(y)\beta_2, (xd_3)(y)\beta_3 \in Aut(G)$ and $(xd_2)(yd_3) \gamma_1, (xd_3)(yd_2)\gamma_2 \not\in Aut(G)$, we can apply Lemma \ref{L4}(i), to get that $\{x, y, d_2, d_3\}$ is a distinguishing class of $G$ and so $\rho(G) \leq 4$.\\
		
		For (B)(ii): Here, $(xd_2)(y)\beta_2, (xd_4)(y)\beta_4 \in Aut(G)$. Since $(xy)(d_1d_2)(d_3d_4)\alpha_3$ equals to $(xy) (d_3d_4)(d_1d_2)\alpha_3 \in Aut(G)$, using Proposition \ref{P10} on $(xd_4)(y)\beta_4$, we have $(yd_3)(x)\beta_5 \in Aut (G)$. In this case, we apply Lemma \ref{L4} (ii) on $(xd_2)(y)\beta_1, (yd_3)(x)\beta_5 \in Aut(G)$. Hence, we need to further show that,
		\begin{align}\label{excluded2}
		(xd_2)(y)(d_3)\beta_6, \ (yd_3)(x)(d_2)\beta_7\not\in Aut(G).
		\end{align}

		Assume to get a contradiction that $(xd_2)(y)(d_3)\beta_6 \in Aut(G)$, for some $\beta_6$. Let
		\begin{align}
		\varphi = (yd_3)(x) \beta_5 \cdot (xd_2)(y) (d_3)\beta_6\in Aut(G)
.		\end{align}
	 Then, $\varphi(d_2) = x$ and $\varphi(d_3) = y$. Hence, $\varphi(\{d_2, d_3\}) = \{x, y\}$ but $\{d_2, d_3\} \not\in E(G)$ and $\{x, y\} \in E(G)$, which is a contradiction. Similarly, suppose to get a contradiction, that $(yd_3)(x)(d_2)\beta_7 \in Aut(G)$. Let $\varphi = (xd_2)(y)\beta_2 \cdot (yd_3)(x)(d_2)\beta_7$. Then, $\varphi(d_2) = x$ and $\varphi(d_3) = y$. Hence, $\varphi(\{d_2, d_3\}) = \{x, y\}$ but $\{d_2, d_3\} \not\in E(G)$ and $\{x, y\} \in E(G)$ which is a contradiction. Thus (\ref{52}) and (\ref{excluded2}) hold, so we can apply Lemma \ref{L4} (ii) to get that $\{x, y, d_2, d_3\}$ is a distinguishing class of $G$ and so $\rho(G) \leq 4$.\\

		\noindent \underline{\textsc{Case 2}(a):} $(xd_i)(y)\beta_i \not \in Aut(G)$ for $ i \in \{3, 4\}$. This is shown in Claim \ref{claim_rho<3}, using $(d_3, d_4)$ as the neighbour-non-neighbour pair of $\{x, y\}$ instead of $(d_1, d_2)$. \\
		
		\noindent\underline{\textsc{Case} 2(b):} $(xd_i)(y) \beta_i \in Aut(G)$, for some $i \in \{1, 2\}$ and $(xd_j)(y) \beta_j \in Aut(G)$, for some $j \in \{3, 4\}$. We also have that $(xy)(d_1d_2)(d_3d_4)\alpha_3 \in Aut(G)$ and $(d_3, d_4)$ is also a neighbour-non-neighbour pair of $\{x, y\}$. Suppose without loss of generality, that
		\begin{align}\label{xd_3}
			\{x, d_3\}, \{y, d_4\} \in E(G) \ \text{and} \  \{x, d_4\}, \{y, d_3\} \not\in E(G).
		\end{align}
		 The situation also breaks down into the following cases:\\

\vspace{-2mm}
        
		(A)(i) $(xd_1)(y)\beta_1, (xd_3)(y)\beta_3 \in Aut(G)$ and (ii) $(xd_1)(y)\beta_1, (xd_4)(y)\beta_4 \in Aut(G)$.\\
		
		(B)(i) $(xd_2)(y)\beta_2, (xd_3)(y)\beta_3 \in Aut(G)$ and (ii) $(xd_2)(y)\beta_2, (xd_4)(y)\beta_4 \in Aut(G)$.\\
		
		   For (A)(ii): We have that   $(xd_1)(y)\beta_1, (xd_4)(y)\beta_4 \in Aut(G)$.  Consider tha the action, \\ $(xd_1)(y)\beta_1 (\{d_1, y\}) = \{x, y\}$. Since $\{d_1, y\} \not \in E(G)$, by (\ref{xd_1}), then $\{x, y\} \not \in E(G)$. Now, it is also the case that $(xd_4)(y)\beta_4 (\{y, d_4\}) $ $= \{x, y\}$. However, $\{y, d_4\} \in E(G)$ by (\ref{xd_3}) and so this gives $\{x, y\} \in E(G)$. Therefore, it cannot be the case that both $(xd_1)(y)\beta_1 \in Aut(G)$ and $(xd_4)(y)\beta_4 \in Aut(G)$.
		
		Similarly, for (B)(i): We have that $(xd_2)(y)\beta_2, (xd_3)(y)\beta_3 \in Aut(G)$. Consider the image, $(xd_2)(y)\beta_2 (\{d_2, y\})$ $ = \{x, y\}$. As $\{d_2, y\}  \in E(G)$, then $\{x, y\}  \in E(G)$. Now, it is also the case that $(xd_3)(y)\beta_3 (\{y, d_3\})= \{x, y\}$. However, $\{y, d_3\} \not\in E(G)$ by (\ref{xd_3}) and so this gives $\{x, y\} \not\in E(G)$. Therefore, it cannot be the case that both $(xd_2)(y)\beta_2 \in Aut(G)$ and $(xd_3)(y)\beta_3 \in Aut(G)$.\\

		 For (A)(i): Here, $(xd_1)(y)\beta_1, (xd_3)(y)\beta_3 \in Aut(G)$. Just as in (\ref{not_xd_1}), we will show that
		\begin{align}
		(xd_1)(yd_3) \gamma_1, (xd_3)(yd_1)\gamma_2 \not\in Aut(G).
		\end{align}
		 Suppose the  contrary, that $(xd_1)(y_3)\gamma_1 \in Aut(G)$. Consider  $(xd_1)(yd_3) \gamma_1(\{x, d_3\}) = \{d_1, y\}$. However, $\{x, d_3 \} \in E(G)$ but $\{d_1, y\} \not\in E(G)$ by (\ref{xd_3}) and (\ref{xd_1}), which is a contradiction. Similarly, suppose that $(xd_3)(yd_1)\gamma_2 \in Aut(G)$. Consider, $(xd_3)(yd_1)\gamma_2 (\{y, d_3\}) = \{d_1, x\}$. However, $\{y, d_3\} \not \in E(G)$ but $\{d_1, x\} \in E(G)$,  by (\ref{xd_3}) and (\ref{xd_1}) which is a contradiction. Therefore,  $(xd_1)(yd_3) \gamma_1, (xd_3)(yd_1)\gamma_2 \not\in Aut(G)$  and we can use Lemma \ref{L4} (i), to give us that $\{x, y, d_1, d_3\}$ is a distinguishing class of $G$ and $\rho(G) \leq 4$. \\   
		
		  For (B)(ii):  Here, $(xd_2)(y)\beta_2, (xd_4)(y)\beta_4 \in Aut(G)$. We will show that,
		\begin{align}
		(xd_2)(yd_4) \gamma_1, (xd_4)(yd_2)\gamma_2 \not\in Aut(G).
		\end{align}
		 Suppose the contrary, that $(xd_2)(y_4)\gamma_1 \in Aut(G)$. Consider  $(xd_2)(yd_4) \gamma_1(\{x, d_4\}) = \{d_2, y\}$. However, $\{x, d_4 \} \not\in E(G)$ but $\{d_2, y\} \in E(G)$,  by (\ref{xd_3}) and (\ref{xd_1}) which is a contradiction.  Similarly, suppose that $(xd_4)(yd_2)\gamma_2 \in Aut(G)$. Consider, $(xd_4)(yd_2)\gamma_2 (\{y, d_4\}) = \{d_2, x\}$. However, $\{y, d_4\}  \in E(G)$ but $\{d_2, x\} \not\in E(G)$,  by (\ref{xd_3}) and (\ref{xd_1}), which is a contradiction. Therefore,  $(xd_2)(yd_4) \gamma_1, (xd_4)(yd_2)\gamma_2 \not\in Aut(G)$  and we can use Lemma \ref{L4} (i), to give us that $\{x, y, d_2, d_4\}$ is a distinguishing class of $G$ and $\rho(G) \leq 4$.
	\end{proof}

        \section {Bounds for graphs with Determining number greater than 2}

        Despite the main result,  that the costs of graphs with determining number 2 cannot be arbitrarily high, it is not known for graphs with higher determining numbers if the costs are bounded. \textit{If} these costs are bounded, then the following class of examples gives lower bounds on what some of these bounds can be:

        \begin{example} Consider  cliques $K_{2^n}$ with paths of $n-1$ edges attached separately to each node. Call these graphs, $K_{2^n}^{p_n}$ (see $K_{2^3}^{p_3}$ in Figure \ref{Fig_3}). It can be seen that a determining set
        
        
        \noindent of this graph is the nodes forming the $K_{2^n}$ subgraph and a \textit{minimum} determining set is one less  node than  these nodes. Hence, $Det(K_{2^n}^{p_n}) = 2^n -1$. Notice that $Aut(K_{2^n}^{p_n}) \cong Aut(K_{2^n}) \cong S_{2^n}$. Consider coloring this graph.  If for any 2 nodes of the clique subgraph, their attached paths have the same labeling, observe that there is a color preserving automorphism which swaps those nodes and their attached paths and leaves the other nodes invariant. Therefore, to distinguish this graph with 2 colors, we must assign a coloring  which effectively gives different names to all of the nodes of the $K_{2^n}$ subgraph using their attached paths. Since there are $2^n$, $n$-length $0$-$1$ sequences, this can just be done. This colors $\frac{1}{2} \cdot 2^n \cdot n$ nodes color 0 and   $\frac{1}{2}  \cdot 2^n \cdot n$ nodes color 1. As this tightly achieves different names for the nodes of the clique subgraph, which is a determining set of the graph, no other type of 2-coloring will distinguish this graph and therefore $\rho( K_{2^n}^{p_n}) = n2^{n-1}$ (i.e. any 2-distinguishing coloring uses all $2^n$ strings on the attached paths in some order). For eg, if $n= 3$, then $\rho(K_{2^3}^{p_3}) = 12$, see Figure \ref{Fig_3} below. As this graph has determining number $7$, then \textit{if}  a bound exists for the cost of graphs of determining number 7, we see that it would have to be $\geq 12$. This class of examples show that in general, if the costs of graphs with $Det(G) = 2^{n}-1 $ is bounded, then this bound is $\geq n2^{n-1}$.

        \end{example}

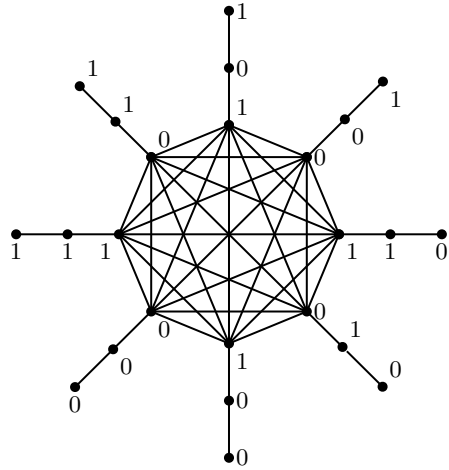
\begin{figure}[H]
\vspace{-3mm}

    \centering

\tikzset{every picture/.style={line width=0.75pt}} 

\begin{tikzpicture}[x=0.75pt,y=0.75pt,yscale=-1,xscale=1]

\draw   (93,100) -- (170.67,100) -- (170.67,177.67) -- (93,177.67) -- cycle ;
\draw   (131.83,193.75) -- (76.91,138.83) -- (131.83,83.91) -- (186.75,138.83) -- cycle ;
\draw    (76.91,138.83) ;
\draw [shift={(76.91,138.83)}, rotate = 0] [color={rgb, 255:red, 0; green, 0; blue, 0 }  ][fill={rgb, 255:red, 0; green, 0; blue, 0 }  ][line width=0.75]      (0, 0) circle [x radius= 2.01, y radius= 2.01]   ;
\draw    (93,100) ;
\draw [shift={(93,100)}, rotate = 0] [color={rgb, 255:red, 0; green, 0; blue, 0 }  ][fill={rgb, 255:red, 0; green, 0; blue, 0 }  ][line width=0.75]      (0, 0) circle [x radius= 2.01, y radius= 2.01]   ;
\draw    (131.83,83.91) ;
\draw [shift={(131.83,83.91)}, rotate = 0] [color={rgb, 255:red, 0; green, 0; blue, 0 }  ][fill={rgb, 255:red, 0; green, 0; blue, 0 }  ][line width=0.75]      (0, 0) circle [x radius= 2.01, y radius= 2.01]   ;
\draw    (170.67,100) ;
\draw [shift={(170.67,100)}, rotate = 0] [color={rgb, 255:red, 0; green, 0; blue, 0 }  ][fill={rgb, 255:red, 0; green, 0; blue, 0 }  ][line width=0.75]      (0, 0) circle [x radius= 2.01, y radius= 2.01]   ;
\draw    (186.75,138.83) ;
\draw [shift={(186.75,138.83)}, rotate = 0] [color={rgb, 255:red, 0; green, 0; blue, 0 }  ][fill={rgb, 255:red, 0; green, 0; blue, 0 }  ][line width=0.75]      (0, 0) circle [x radius= 2.01, y radius= 2.01]   ;
\draw    (170.67,177.67) ;
\draw [shift={(170.67,177.67)}, rotate = 0] [color={rgb, 255:red, 0; green, 0; blue, 0 }  ][fill={rgb, 255:red, 0; green, 0; blue, 0 }  ][line width=0.75]      (0, 0) circle [x radius= 2.01, y radius= 2.01]   ;
\draw    (131.83,193.75) ;
\draw [shift={(131.83,193.75)}, rotate = 0] [color={rgb, 255:red, 0; green, 0; blue, 0 }  ][fill={rgb, 255:red, 0; green, 0; blue, 0 }  ][line width=0.75]      (0, 0) circle [x radius= 2.01, y radius= 2.01]   ;
\draw    (93,177.67) ;
\draw [shift={(93,177.67)}, rotate = 0] [color={rgb, 255:red, 0; green, 0; blue, 0 }  ][fill={rgb, 255:red, 0; green, 0; blue, 0 }  ][line width=0.75]      (0, 0) circle [x radius= 2.01, y radius= 2.01]   ;
\draw    (93,100) -- (131.83,83.91) ;
\draw    (170.67,100) -- (131.83,83.91) ;
\draw    (76.91,138.83) -- (93,177.67) ;
\draw    (93,177.67) -- (131.83,193.75) ;
\draw    (76.91,138.83) -- (93,100) ;
\draw    (131.83,193.75) -- (170.67,177.67) ;
\draw    (170.67,177.67) -- (186.75,138.83) ;
\draw    (186.75,138.83) -- (170.67,100) ;
\draw    (93,100) -- (170.67,177.67) ;
\draw    (170.67,100) -- (93,177.67) ;
\draw    (131.83,83.91) -- (93,177.67) ;
\draw    (186.75,138.83) -- (93,177.67) ;
\draw    (170.67,177.67) -- (131.83,83.91) ;
\draw    (170.67,177.67) -- (76.91,138.83) ;
\draw    (131.83,83.91) -- (131.83,193.75) ;
\draw    (170.67,100) -- (131.83,193.75) ;
\draw    (93,100) -- (131.83,193.75) ;
\draw    (93,100) -- (186.75,138.83) ;
\draw    (76.91,138.83) -- (186.75,138.83) ;
\draw    (76.91,138.83) -- (170.67,100) ;
\draw    (131.83,193.75) -- (131.83,222.5) ;
\draw [shift={(131.83,222.5)}, rotate = 90] [color={rgb, 255:red, 0; green, 0; blue, 0 }  ][fill={rgb, 255:red, 0; green, 0; blue, 0 }  ][line width=0.75]      (0, 0) circle [x radius= 2.01, y radius= 2.01]   ;
\draw    (131.83,222.5) -- (131.83,251.25) ;
\draw [shift={(131.83,251.25)}, rotate = 90] [color={rgb, 255:red, 0; green, 0; blue, 0 }  ][fill={rgb, 255:red, 0; green, 0; blue, 0 }  ][line width=0.75]      (0, 0) circle [x radius= 2.01, y radius= 2.01]   ;
\draw    (170.67,177.67) -- (188.5,195.5) ;
\draw [shift={(188.5,195.5)}, rotate = 45] [color={rgb, 255:red, 0; green, 0; blue, 0 }  ][fill={rgb, 255:red, 0; green, 0; blue, 0 }  ][line width=0.75]      (0, 0) circle [x radius= 2.01, y radius= 2.01]   ;
\draw    (190.67,197.67) -- (208.5,215.5) ;
\draw [shift={(208.5,215.5)}, rotate = 45] [color={rgb, 255:red, 0; green, 0; blue, 0 }  ][fill={rgb, 255:red, 0; green, 0; blue, 0 }  ][line width=0.75]      (0, 0) circle [x radius= 2.01, y radius= 2.01]   ;
\draw    (131.83,26.42) -- (131.83,55.17) ;
\draw [shift={(131.83,55.17)}, rotate = 90] [color={rgb, 255:red, 0; green, 0; blue, 0 }  ][fill={rgb, 255:red, 0; green, 0; blue, 0 }  ][line width=0.75]      (0, 0) circle [x radius= 2.01, y radius= 2.01]   ;
\draw    (131.83,55.17) -- (131.83,83.91) ;
\draw [shift={(131.83,83.91)}, rotate = 90] [color={rgb, 255:red, 0; green, 0; blue, 0 }  ][fill={rgb, 255:red, 0; green, 0; blue, 0 }  ][line width=0.75]      (0, 0) circle [x radius= 2.01, y radius= 2.01]   ;
\draw    (131.83,26.42) ;
\draw [shift={(131.83,26.42)}, rotate = 0] [color={rgb, 255:red, 0; green, 0; blue, 0 }  ][fill={rgb, 255:red, 0; green, 0; blue, 0 }  ][line width=0.75]      (0, 0) circle [x radius= 2.01, y radius= 2.01]   ;
\draw    (57.33,64.33) -- (75.17,82.17) ;
\draw [shift={(75.17,82.17)}, rotate = 45] [color={rgb, 255:red, 0; green, 0; blue, 0 }  ][fill={rgb, 255:red, 0; green, 0; blue, 0 }  ][line width=0.75]      (0, 0) circle [x radius= 2.01, y radius= 2.01]   ;
\draw    (75.17,82.17) -- (93,100) ;
\draw [shift={(93,100)}, rotate = 45] [color={rgb, 255:red, 0; green, 0; blue, 0 }  ][fill={rgb, 255:red, 0; green, 0; blue, 0 }  ][line width=0.75]      (0, 0) circle [x radius= 2.01, y radius= 2.01]   ;
\draw    (57.33,64.33) ;
\draw [shift={(57.33,64.33)}, rotate = 0] [color={rgb, 255:red, 0; green, 0; blue, 0 }  ][fill={rgb, 255:red, 0; green, 0; blue, 0 }  ][line width=0.75]      (0, 0) circle [x radius= 2.01, y radius= 2.01]   ;
\draw    (189.67,81) -- (170.67,100) ;
\draw [shift={(170.67,100)}, rotate = 135] [color={rgb, 255:red, 0; green, 0; blue, 0 }  ][fill={rgb, 255:red, 0; green, 0; blue, 0 }  ][line width=0.75]      (0, 0) circle [x radius= 2.01, y radius= 2.01]   ;
\draw    (208.67,62) -- (189.67,81) ;
\draw [shift={(189.67,81)}, rotate = 135] [color={rgb, 255:red, 0; green, 0; blue, 0 }  ][fill={rgb, 255:red, 0; green, 0; blue, 0 }  ][line width=0.75]      (0, 0) circle [x radius= 2.01, y radius= 2.01]   ;
\draw    (74,196.67) -- (55,215.67) ;
\draw [shift={(55,215.67)}, rotate = 135] [color={rgb, 255:red, 0; green, 0; blue, 0 }  ][fill={rgb, 255:red, 0; green, 0; blue, 0 }  ][line width=0.75]      (0, 0) circle [x radius= 2.01, y radius= 2.01]   ;
\draw    (93,177.67) -- (74,196.67) ;
\draw [shift={(74,196.67)}, rotate = 135] [color={rgb, 255:red, 0; green, 0; blue, 0 }  ][fill={rgb, 255:red, 0; green, 0; blue, 0 }  ][line width=0.75]      (0, 0) circle [x radius= 2.01, y radius= 2.01]   ;
\draw    (208.67,62) ;
\draw [shift={(208.67,62)}, rotate = 0] [color={rgb, 255:red, 0; green, 0; blue, 0 }  ][fill={rgb, 255:red, 0; green, 0; blue, 0 }  ][line width=0.75]      (0, 0) circle [x radius= 2.01, y radius= 2.01]   ;
\draw    (51.25,138.83) -- (76.91,138.83) ;
\draw [shift={(51.25,138.83)}, rotate = 0] [color={rgb, 255:red, 0; green, 0; blue, 0 }  ][fill={rgb, 255:red, 0; green, 0; blue, 0 }  ][line width=0.75]      (0, 0) circle [x radius= 2.01, y radius= 2.01]   ;
\draw    (25.59,138.83) -- (51.25,138.83) ;
\draw [shift={(25.59,138.83)}, rotate = 0] [color={rgb, 255:red, 0; green, 0; blue, 0 }  ][fill={rgb, 255:red, 0; green, 0; blue, 0 }  ][line width=0.75]      (0, 0) circle [x radius= 2.01, y radius= 2.01]   ;
\draw    (212.42,138.83) -- (238.08,138.83) ;
\draw [shift={(212.42,138.83)}, rotate = 0] [color={rgb, 255:red, 0; green, 0; blue, 0 }  ][fill={rgb, 255:red, 0; green, 0; blue, 0 }  ][line width=0.75]      (0, 0) circle [x radius= 2.01, y radius= 2.01]   ;
\draw    (186.75,138.83) -- (212.42,138.83) ;
\draw [shift={(186.75,138.83)}, rotate = 0] [color={rgb, 255:red, 0; green, 0; blue, 0 }  ][fill={rgb, 255:red, 0; green, 0; blue, 0 }  ][line width=0.75]      (0, 0) circle [x radius= 2.01, y radius= 2.01]   ;
\draw    (238.08,138.83) ;
\draw [shift={(238.08,138.83)}, rotate = 0] [color={rgb, 255:red, 0; green, 0; blue, 0 }  ][fill={rgb, 255:red, 0; green, 0; blue, 0 }  ][line width=0.75]      (0, 0) circle [x radius= 2.01, y radius= 2.01]   ;

\draw (55,219.07) node [anchor=north] [inner sep=0.75pt]  [font=\scriptsize]  {$0$};
\draw (76,200.07) node [anchor=north west][inner sep=0.75pt]  [font=\scriptsize]  {$0$};
\draw (95,181.07) node [anchor=north west][inner sep=0.75pt]  [font=\scriptsize]  {$0$};
\draw (133.83,222.5) node [anchor=west] [inner sep=0.75pt]  [font=\scriptsize]  {$0$};
\draw (133.83,197.15) node [anchor=north west][inner sep=0.75pt]  [font=\scriptsize]  {$1$};
\draw (210.5,212.1) node [anchor=south west] [inner sep=0.75pt]  [font=\scriptsize]  {$0$};
\draw (190.5,192.1) node [anchor=south west] [inner sep=0.75pt]  [font=\scriptsize]  {$1$};
\draw (172.67,177.67) node [anchor=west] [inner sep=0.75pt]  [font=\scriptsize]  {$0$};
\draw (238.08,142.23) node [anchor=north] [inner sep=0.75pt]  [font=\scriptsize]  {$0$};
\draw (212.42,142.23) node [anchor=north] [inner sep=0.75pt]  [font=\scriptsize]  {$1$};
\draw (188.75,142.23) node [anchor=north west][inner sep=0.75pt]  [font=\scriptsize]  {$1$};
\draw (210.67,65.4) node [anchor=north west][inner sep=0.75pt]  [font=\scriptsize]  {$1$};
\draw (191.67,84.4) node [anchor=north west][inner sep=0.75pt]  [font=\scriptsize]  {$0$};
\draw (172.67,100) node [anchor=west] [inner sep=0.75pt]  [font=\scriptsize]  {$0$};
\draw (133.83,26.42) node [anchor=west] [inner sep=0.75pt]  [font=\scriptsize]  {$1$};
\draw (133.83,55.17) node [anchor=west] [inner sep=0.75pt]  [font=\scriptsize]  {$0$};
\draw (133.83,80.51) node [anchor=south west] [inner sep=0.75pt]  [font=\scriptsize]  {$1$};
\draw (59.33,60.93) node [anchor=south west] [inner sep=0.75pt]  [font=\scriptsize]  {$1$};
\draw (77.17,78.77) node [anchor=south west] [inner sep=0.75pt]  [font=\scriptsize]  {$1$};
\draw (95,96.6) node [anchor=south west] [inner sep=0.75pt]  [font=\scriptsize]  {$0$};
\draw (25.59,142.23) node [anchor=north] [inner sep=0.75pt]  [font=\scriptsize]  {$1$};
\draw (51.25,142.23) node [anchor=north] [inner sep=0.75pt]  [font=\scriptsize]  {$1$};
\draw (74.91,142.23) node [anchor=north east] [inner sep=0.75pt]  [font=\scriptsize]  {$1$};
\draw (133.83,251.25) node [anchor=west] [inner sep=0.75pt]  [font=\scriptsize]  {$0$};

\end{tikzpicture}

    \caption{Figure 3. The graph $K_{2^3}^{p_3}$.}
    \label{Fig_3}
    
    \vspace{1mm}
    \end{figure}

	\section{Open Problems and Future Work}
    \vspace{2mm}
		
		\noindent 	1. For fixed $Det(G) \geq 3$, can $\rho(G)$ be arbitrarily far from $Det(G)$?\\	
        
		\noindent   2. We proved that for a graph $G$ with $D(G) = 2$ and $Det(G) = 2$, that $\rho(G) = 2, 3$ or $4$. It is easy to give examples for $\rho(G)$ being $2$ or $3$,  see the graphs $G_1$ and $G_2$ below. The blue circles represent (minimal) determining sets, while the reds represent  cost nodes.

\begin{center}

\tikzset{every picture/.style={line width=0.75pt}} 

\begin{tikzpicture}[x=0.65pt,y=0.65pt,yscale=-1,xscale=1]

\draw    (368,131.33) -- (433.33,131.33) ;
\draw [shift={(433.33,131.33)}, rotate = 0] [color={rgb, 255:red, 0; green, 0; blue, 0 }  ][fill={rgb, 255:red, 0; green, 0; blue, 0 }  ][line width=0.75]      (0, 0) circle [x radius= 2.01, y radius= 2.01]   ;
\draw [shift={(368,131.33)}, rotate = 0] [color={rgb, 255:red, 0; green, 0; blue, 0 }  ][fill={rgb, 255:red, 0; green, 0; blue, 0 }  ][line width=0.75]      (0, 0) circle [x radius= 2.01, y radius= 2.01]   ;
\draw   (334.59,97.81) -- (368,131.33) -- (335.89,163.33) ;
\draw    (334.59,97.81) ;
\draw [shift={(334.59,97.81)}, rotate = 0] [color={rgb, 255:red, 0; green, 0; blue, 0 }  ][fill={rgb, 255:red, 0; green, 0; blue, 0 }  ][line width=0.75]      (0, 0) circle [x radius= 2.01, y radius= 2.01]   ;
\draw [color={rgb, 255:red, 208; green, 2; blue, 27 }  ,draw opacity=1 ]   (335.89,163.33) ;
\draw [shift={(335.89,163.33)}, rotate = 0] [color={rgb, 255:red, 208; green, 2; blue, 27 }  ,draw opacity=1 ][fill={rgb, 255:red, 208; green, 2; blue, 27 }  ,fill opacity=1 ][line width=0.75]      (0, 0) circle [x radius= 2.01, y radius= 2.01]   ;
\draw    (465.39,99.28) ;
\draw [shift={(465.39,99.28)}, rotate = 0] [color={rgb, 255:red, 0; green, 0; blue, 0 }  ][fill={rgb, 255:red, 0; green, 0; blue, 0 }  ][line width=0.75]      (0, 0) circle [x radius= 2.01, y radius= 2.01]   ;
\draw   (466.8,164.8) -- (433.33,131.33) -- (465.39,99.28) ;
\draw    (74.59,96.47) -- (205.39,97.94) ;
\draw    (73.92,159.81) -- (204.72,161.28) ;
\draw [color={rgb, 255:red, 208; green, 2; blue, 27 }  ,draw opacity=1 ]   (74.59,96.47) ;
\draw [shift={(74.59,96.47)}, rotate = 0] [color={rgb, 255:red, 208; green, 2; blue, 27 }  ,draw opacity=1 ][fill={rgb, 255:red, 208; green, 2; blue, 27 }  ,fill opacity=1 ][line width=0.75]      (0, 0) circle [x radius= 2.01, y radius= 2.01]   ;
\draw    (205.39,97.94) ;
\draw [shift={(205.39,97.94)}, rotate = 0] [color={rgb, 255:red, 0; green, 0; blue, 0 }  ][fill={rgb, 255:red, 0; green, 0; blue, 0 }  ][line width=0.75]      (0, 0) circle [x radius= 2.01, y radius= 2.01]   ;
\draw    (139.99,97.21) ;
\draw [shift={(139.99,97.21)}, rotate = 0] [color={rgb, 255:red, 0; green, 0; blue, 0 }  ][fill={rgb, 255:red, 0; green, 0; blue, 0 }  ][line width=0.75]      (0, 0) circle [x radius= 2.01, y radius= 2.01]   ;
\draw    (139.99,97.21) -- (139.99,160.54) ;
\draw    (73.92,159.81) ;
\draw [shift={(73.92,159.81)}, rotate = 0] [color={rgb, 255:red, 0; green, 0; blue, 0 }  ][fill={rgb, 255:red, 0; green, 0; blue, 0 }  ][line width=0.75]      (0, 0) circle [x radius= 2.01, y radius= 2.01]   ;
\draw  [color={rgb, 255:red, 74; green, 144; blue, 226 }  ,draw opacity=1 ][line width=1.5]  (65.02,96.47) .. controls (65.02,91.19) and (69.3,86.9) .. (74.59,86.9) .. controls (79.87,86.9) and (84.16,91.19) .. (84.16,96.47) .. controls (84.16,101.76) and (79.87,106.04) .. (74.59,106.04) .. controls (69.3,106.04) and (65.02,101.76) .. (65.02,96.47) -- cycle ;
\draw  [color={rgb, 255:red, 74; green, 144; blue, 226 }  ,draw opacity=1 ][line width=1.5]  (195.15,161.28) .. controls (195.15,155.99) and (199.44,151.71) .. (204.72,151.71) .. controls (210.01,151.71) and (214.29,155.99) .. (214.29,161.28) .. controls (214.29,166.56) and (210.01,170.85) .. (204.72,170.85) .. controls (199.44,170.85) and (195.15,166.56) .. (195.15,161.28) -- cycle ;
\draw  [color={rgb, 255:red, 74; green, 144; blue, 226 }  ,draw opacity=1 ][line width=1.5]  (326.32,163.33) .. controls (326.32,158.05) and (330.6,153.76) .. (335.89,153.76) .. controls (341.17,153.76) and (345.46,158.05) .. (345.46,163.33) .. controls (345.46,168.62) and (341.17,172.9) .. (335.89,172.9) .. controls (330.6,172.9) and (326.32,168.62) .. (326.32,163.33) -- cycle ;
\draw  [color={rgb, 255:red, 74; green, 144; blue, 226 }  ,draw opacity=1 ][line width=1.5]  (457.23,164.8) .. controls (457.23,159.52) and (461.52,155.23) .. (466.8,155.23) .. controls (472.09,155.23) and (476.37,159.52) .. (476.37,164.8) .. controls (476.37,170.09) and (472.09,174.37) .. (466.8,174.37) .. controls (461.52,174.37) and (457.23,170.09) .. (457.23,164.8) -- cycle ;
\draw [color={rgb, 255:red, 208; green, 2; blue, 27 }  ,draw opacity=1 ]   (466.8,164.8) ;
\draw [shift={(466.8,164.8)}, rotate = 0] [color={rgb, 255:red, 208; green, 2; blue, 27 }  ,draw opacity=1 ][fill={rgb, 255:red, 208; green, 2; blue, 27 }  ,fill opacity=1 ][line width=0.75]      (0, 0) circle [x radius= 2.01, y radius= 2.01]   ;
\draw [color={rgb, 255:red, 208; green, 2; blue, 27 }  ,draw opacity=1 ]   (368,131.33) ;
\draw [shift={(368,131.33)}, rotate = 0] [color={rgb, 255:red, 208; green, 2; blue, 27 }  ,draw opacity=1 ][fill={rgb, 255:red, 208; green, 2; blue, 27 }  ,fill opacity=1 ][line width=0.75]      (0, 0) circle [x radius= 2.01, y radius= 2.01]   ;
\draw    (73.92,159.81) .. controls (109.83,193.5) and (181.17,188.83) .. (204.72,161.28) ;
\draw [color={rgb, 255:red, 208; green, 2; blue, 27 }  ,draw opacity=1 ]   (204.72,161.28) ;
\draw [shift={(204.72,161.28)}, rotate = 0] [color={rgb, 255:red, 208; green, 2; blue, 27 }  ,draw opacity=1 ][fill={rgb, 255:red, 208; green, 2; blue, 27 }  ,fill opacity=1 ][line width=0.75]      (0, 0) circle [x radius= 2.01, y radius= 2.01]   ;

\draw (139.32,84.47) node [anchor=south] [inner sep=0.75pt]    {$G_{1}$};
\draw (400.83,86.14) node [anchor=south] [inner sep=0.75pt]    {$G_{2}$};

\end{tikzpicture}

\end{center}

However, we could not find an example where $\rho(G) = 4$. Find such an example or show that one does not exist. \\
		
		\noindent	3. Find and classify sets of  distinguishably equivalent graphs. Graphs which satisfy  $Aut(G, V(G)_l)  = \{e\} $, where $e =(1)\dots (n)$ and $l: V(G) \rightarrow \{1, \dots n\}$ for example,   are the asymmetric graphs on $n$ nodes. Moreover,  while a graph and its complement are  distinguishably equivalent, many more possibilities exist. See Example \ref{E1}, for  two  distinguishably equivalent graphs which are not asymmetric or complements of each other. What are some other cases? 

        \vspace{-2mm}

\bibliographystyle{cost_paper}
\bibliography{cost_paper}

\vspace{6mm}		
		\noindent Alexa Gopaulsingh: Department of Logic, Eötvös Loránd University, Budapest, Hungary\\
		Email: alexa279e@gmail.com\\

		\noindent Zalán Molnár: Department of Logic, Eötvös Loránd University, Budapest, Hungary\\
		Email: mozaag@gmail.com\\

        \noindent Amitayu Banerjee: Department of Logic, Eötvös Loránd University, Budapest, Hungary\\
        Email: banerjee.amitayu@gmail.com
        

\end{document}